\documentclass[12pt]{amsart}

\pdfoutput=1

\usepackage{graphicx}
\usepackage{color}
\usepackage{bm}
\usepackage{subfigure}
\usepackage{mathabx}
\usepackage{multirow}
\usepackage{setspace}
\usepackage{appendix}
\usepackage{amscd,amsthm,amssymb,amsfonts,amsmath,mathrsfs,amsrefs}
\usepackage{todonotes}
\usepackage{enumitem}
\usepackage{csquotes}

\allowdisplaybreaks
\usepackage{geometry}
\geometry{left=1in,right=1in,top=1in,bottom=1in}



\newcommand{\Tr}{\operatorname{Tr}}

\newcommand{\tr}{\operatorname{tr}}
\newcommand{\e}{\epsilon}
\newcommand{\Real}{\operatorname{Re}}
\newcommand{\dsst}{\displaystyle}
\newcommand{\vol}{\mathrm{vol}}
\newcommand{\ip}[2]{\langle #1, #2 \rangle}

\newcommand{\M}{\mathcal{M}}

\newcommand{\U}{\mathcal{U}}

\newcommand{\dstar}{d^\ast}

\newcommand{\dstarh}{d^{\ast_h}}

\newcommand{\im}{\operatorname{im}}
\newcommand{\End}{\operatorname{End}}

\newcommand{\Hom}{\operatorname{Hom}}
\newcommand{\sign}{\operatorname{sign}}

\newcommand{\R}{\mathcal{R}}
\newcommand{\C}{\mathcal{C}}
\newcommand{\Mprod}{\mathcal{M}_{\text{prod}}}

\newcommand{\conj}[1]{\overline{#1}}
\newcommand{\Trconj}{\conj{\mathrm{Tr}}\,}

\newtheorem{theorem}{Theorem}[section]
\newtheorem*{theorem*}{Theorem}
\newtheorem{lemma}[theorem]{Lemma}
\newtheorem{prop}[theorem]{Proposition}
\newtheorem{coro}[theorem]{Corollary}

\theoremstyle{definition}
\newtheorem{defn}[theorem]{Definition}

\theoremstyle{remark}
\newtheorem{remark}[theorem]{Remark}
\newtheorem*{remark*}{Remark}

\numberwithin{equation}{section}

\usepackage[pdfusetitle, plainpages=false, bookmarks, bookmarksnumbered,
				colorlinks, linkcolor=black, citecolor=black,
	         filecolor=black, urlcolor=black]{hyperref}

\begin{document}			

			\title[A generalization
			of analytic torsion]{A generalization of analytic torsion
				via differential forms on spaces of metrics}
		
		\author{Phillip Andreae}
		
		\date{July 29, 2021}
		
	\address{Mathematics \& Computer Science Department,
		Meredith College, 3800 Hillsborough St.,
		Raleigh, NC 27607}
		 \email{pvandreae@meredith.edu}

															\begin{abstract}			
																We introduce multi-torsion, a spectral invariant
																generalizing Ray-Singer
																analytic torsion.
																We define multi-torsion
																for compact manifolds with a certain local
																geometric product structure
																that gives a bigrading
																on differential forms.
																We prove that multi-torsion is metric-independent
																in a suitable sense.
																Our definition of multi-torsion
																is inspired by an interpretation of each of analytic torsion
																and the eta invariant
																as a regularized integral of a closed differential form on a space of metrics on a vector bundle
																or on a space of elliptic operators.
																We generalize the Stokes' theorem
																argument explaining the dependence of torsion and eta on the geometric data used to define them
																to the local product setting to prove
																our metric-independence theorem for multi-torsion.
																		\end{abstract}
						
						\thanks{The author thanks
							Mark Stern for many useful conversations.}

						\maketitle
			


	\setcounter{tocdepth}{1}
	\tableofcontents

	
	\section{Introduction}
A natural way to obtain geometric invariants
	is to integrate closed differential forms over cycles in a space of operators or metrics,
	e.g., a space of metrics of special holonomy.
Since we will consider the space of all Riemannian metrics,
which is contractible,
	we will in fact consider cycles that are closed
	 relative to the cylindrical ends of that space.
	 After zero-forms
	(e.g., the index, a locally constant function on
	the space of elliptic operators),
	the simplest case is degree one.
	We discover two closed one-forms defined in terms of spectral data
	whose regularized integrals over a natural path
	give the analytic torsion and eta invariant, respectively.
	We use this idea to provide a new interpretation
	of the metric-invariance properties of these (relative) invariants.
	
	We next seek closed forms of higher degree.
	Here the story is still in its infancy,
	but we find a closed two-form in the case
	of a simple special holonomy: reducible holonomy.
	Our two-form, which we call $\omega_{MT}$, is the wedge product of the torsion one-forms corresponding to the
	factors in the case of a global product,
	which inspires our definition of $\omega_{MT}$ in the general case.
	Integrating $\omega_{MT}$ over a natural two-cycle gives
	a new invariant that we call multi-torsion.
	We investigate its properties
	and prove a metric independence theorem under appropriate conditions.
	In future work, we plan to study the application of multi-torsion
	to the most interesting cases of reducible holonomy
	for number theory: Hilbert modular varieties and their relatives.

		\subsection{Motivation and background: Analytic torsion}
		Ray-Singer  \cites{ray-singer-71, ray-singer-complex} introduced the analytic torsion $T$
		in the early 1970's	 as a geometric-analytic analogue
		of the Reidemeister torsion $\tau$,
		a classical combinatorial invariant.
		Ray-Singer's conjecture that $T = \tau$
		was proven independently
		by Cheeger \cite{cheeger} and M\"{u}ller \cite{muller-78} and later generalized by M\"{u}ller \cite{muller-93} and Bismut-Zhang \cite{bismut-zhang}, among others.	
		Among the consequences of the  Cheeger-M\"{u}ller theorem
		is the surprising fact that 
		torsion in integral cohomology can be studied with analytic methods.
		This fact  is central to some recent work in
		number theory and arithmetic geometry,
		much of it centered around a conjecture
		of Bergeron-Venkatesh about the growth of torsion in the homology
		of arithmetic groups \cite{bergeron-venkatesh}.
		There have also been several recent generalizations of analytic torsion
		to non-compact and singular manifolds
		(e.g., \cites{albin-rochon-sher-2018-wedge,albin-rochon-sher-2018-witt,lesch, mazzeo-vertman,muller-pfaff-1, muller-pfaff-2, muller-pfaff-3, vertman}).
		
		Our multi-torsion is intended to be a novel
		``higher" torsion,
		several notions of which already exist in the literature,
		including the real analytic torsion forms of Bismut-Lott \cite{bismut-lott}.
		(See also the holomorphic torsion forms of
		Bismut-Gillet-Soul\'{e} \cites{bgsI, bgsII, bgsIII}.)
		For nonnegative even integers $j$,
		Bismut-Lott construct higher analytic torsion forms $\mathscr{T}_{j}$
		that are differential forms of degree $j$ on the base of a fiber bundle;
		$\mathscr{T}_0$ is 
		half the de Rham analytic torsion of the fibers.
		To our knowledge, there is not a direct connection between 
		our forms and the forms $\mathscr{T}_j$.
		Whereas the degree-zero part of the Bismut-Lott higher torsion forms
		is the analytic torsion itself,
		we \emph{integrate} our forms to produce analytic torsion
		and multi-torsion, respectively.
		
		To review the de Rham torsion
		and to introduce this paper's approach, let us
		consider a compact smooth $n$-manifold $M$
		and a flat vector bundle $F \to M$.
		Let $\Lambda^q F$ denote the bundle of $F$-valued
		$q$-forms.
		The analytic torsion $T = T(M, \Lambda^\bullet F)$ 
		is defined by
		\begin{align} \label{eq: torsion in terms of det}
		\log T = - \frac{1}{2} \sum_{q=0}^n (-1)^q q \log \det \Delta_q,
		\end{align}
		which may be interpreted as the zeta-regularization of the divergent integral
		\begin{equation} \label{eq: integral defining torsion}
		\frac{1}{2} \int_0^\infty \sum_{q=0}^n (-1)^q q \Tr' e^{-t \Delta_q} \frac{dt}{t},
		\end{equation}
		where $\Delta_q$ denotes the Laplacian
		on $\Lambda^q F$ and $\Tr'$ denotes
		the trace on $(\ker \Delta)^\perp$.
	 	Ray-Singer proved that $T$ is independent of the choice of Riemannian metric
		assuming that $F$ is acyclic \cite{ray-singer-71}.
		We provide a new interpretation of that result
		by introducing the following one-form $\omega_{T}$
		on the space of metrics, which we prove is closed:
		\begin{align} \label{eq: omega RS}
		\omega_{T} := \sum_{q=0}^n (-1)^q  \Tr' e^{-t \Delta_q} (h^q)^{-1} (\delta h^q).
		\end{align}
		The regularized integral of $\omega_T$
		along a radial curve in the space of metrics
		gives $2 \log T$.
		In \eqref{eq: omega RS}, $h^q$ denotes the metric on $q$-forms,
		viewed as a bundle isomorphism $\Lambda^q F \to (\Lambda^q F)^\ast$,
		and $\delta$ denotes
		the exterior derivative on differential forms on the space of such metrics.
		Note that $\omega_T$ generalizes
		the one-form $\alpha$
		of Bismut-Zhang \cite{bismut-zhang} (see Remark \ref{remark: bismut-zhang}).

		We explain in the sequel how this approach
		generalizes to the setting
		of a $\mathbb{Z}$-graded elliptic complex,
		introduced by Schwarz \cite{schwarz},
		which is a special case of Mathai-Wu's analytic torsion
		for $\mathbb{Z}_2$-graded elliptic complexes \cite{mathai-wu}.
		This then provides a model for our approach to multi-torsion,
		for which a bigrading plays an essential role.

		\subsection{Definition of multi-torsion and main results}
		Suppose that $M_1$ and $M_2$
		are compact manifolds and  that $F_1 \to M_1$ and $F_2 \to M_2$ are orthogonal
		or unitary flat vector bundles.
		We must assume that $F_1$ and $F_2$ are both acyclic.
		Let $F = \pi_1^\ast F_1 \otimes \pi_2^\ast F_2 \to M_1 \times M_2$
		be the flat product vector bundle,
		where for $j=1,2$, $\pi_j: M_1 \times M_2 \to M_j$ is the projection.
		Let $\Gamma \subset \operatorname{Diff}(M_1) \times \operatorname{Diff}(M_2) \subset
		\operatorname{Diff}(M_1 \times M_2)$ be a group of diffeomorphisms
		that lift to vector bundle isomorphisms
		preserving the flat structures on $F_1$ and $F_2$.
		(See \S \ref{subsection: The geometry of finite quotients of product manifolds}
		for the details.)
		For $j=1,2$, let $\Gamma_j$ denote the projection of $\Gamma$
		onto $\operatorname{Diff}(M_j)$.
		We assume that $\Gamma$ is finite, although we expect that this assumption could be weakened.
		
		There is  a quotient flat bundle $F_\Gamma \to M_\Gamma$
		over the quotient $M_\Gamma := \Gamma \backslash (M_1 \times M_2)$,
		which inherits a local product structure,
		giving a bigrading of $F_\Gamma$-valued forms
		into ``$(q_1, q_2)$-forms"
		(for $0 \leq q_j \leq n_j$)
		and a decomposition of $d$:
		$d = d_1 + d_2$,
		where $d_1$ maps $(q_1, q_2)$-forms to $(q_1 + 1, q_2)$-forms
		and $d_2$ is similar.
		If in addition we endow $M_1 \times M_2$ with a $\Gamma$-invariant product Riemannian metric
		$h_1 \times h_2$,
		then we obtain a decomposition of the Laplacian on $F_\Gamma$-valued forms:
		$\Delta = \Delta_1 + \Delta_2$, where for $j=1,2$,
		$\Delta_j := d_j d^\ast_j + d^\ast_j d_j$.
		For $t_1, t_2 >0$, let
		$\Delta(t_1, t_2) := t_1 \Delta_1 + t_2 \Delta_2$;
		write $\Delta^{q_1, q_2}(t_1, t_2)$
		for its restriction to $(q_1, q_2)$-forms.
		We define the multi-torsion $MT = MT(M_\Gamma, F_\Gamma, h_1, h_2)$ by
		\begin{align*}
		MT& := \frac{1}{4}	 \left. \frac{\partial^2}{\partial s_1 \partial s_2} \right|_{(s_1, s_2)=(0,0)} \zeta(s_1, s_2), \text{ where for $\Real s_1, \Real s_2$ large,} \\
		\zeta(s_1, s_2) &:=	\frac{1}{\Gamma(s_1) \Gamma(s_2)}  
		\int_0^\infty \int_0^\infty 
		\sum_{q_1, q_2} (-1)^{q_1 + q_2} q_1 q_2 \Tr e^{-\Delta^{q_1, q_2}(t_1, t_2)}  t_1^{s_1} t_2^{s_2}  \, \frac{dt_1}{t_1} \frac{dt_2}{t_2}.
		\end{align*}
		We view $MT$ as a regularization of the
		following divergent  integral,
		which is a two-variable
		generalization of \eqref{eq: integral defining torsion}:
		\begin{equation*}
		\frac{1}{4} \int_0^\infty \int_0^\infty 
		\sum_{q_1, q_2} (-1)^{q_1 + q_2} q_1 q_2 \Tr e^{-\Delta^{q_1, q_2}(t_1, t_2)} \, \frac{dt_1}{t_1} \frac{dt_2}{t_2}.
		\end{equation*}
		To ensure that $MT$ is well-defined,
		we study the asymptotic properties of the two-parameter heat kernel $e^{-\Delta(t_1, t_2)}$
		and develop a theory of ``multi-zeta" functions
		such as $\zeta(s_1, s_2)$ above.
		These are functions of two complex variables resembling the zeta functions of Shintani \cite{shintani}.

		It is apparent that in the case when $\Gamma$ is the trivial group,
		we have the product formula
		\begin{align*}
		MT(M_1 \times M_2, \pi_1^\ast F_1 \otimes \pi_2^\ast F_2, h_1, h_2  ) 
		&= \log T(M_1, \Lambda^\bullet F_1, h_1) \log T(M_2,  \Lambda^\bullet F_2, h_2),
		\end{align*}
		where $T$ is the Ray-Singer analytic torsion.
		This motivates the general case,
		but in general, $MT$ 
		does not decompose as a product.

		Our main results concern the dependence of $MT$
		on the local product metric.
		First, we obtain the following
		generalization of the vanishing
		of the de Rham torsion in even dimensions \cite{ray-singer-71}.
		Our proof uses the Hodge star operator.
		\begin{theorem*}[Theorem \ref{theorem: even vanishing of MT}]
			Suppose that for at least one of either   $j=1$ or $j=2$,
			$n_j$ is even and $\Gamma_j$ acts on $M_j$ by orientation-preserving diffeomorphisms.
			Then $MT(h_1, h_2) = 0$ for any metrics $h_1$ and $h_2$.
		\end{theorem*}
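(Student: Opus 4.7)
Without loss of generality, assume $n_1$ is even and $\Gamma_1$ acts on $M_1$ by orientation-preserving diffeomorphisms. The plan is to show that the integrand of the double integral \eqref{eq: double integral for MT} defining $MT$ vanishes identically as a function of $(t_1, t_2) \in (0, \infty)^2$, so that the regularized integral is automatically zero. First I would exploit the Hodge star operator $\ast_1$ on $M_1$ associated to $h_1$ and the chosen orientation: since $\Gamma_1$ preserves both $h_1$ and the orientation, extending $\ast_1$ by the identity on the $M_2$ factor produces a $\Gamma$-equivariant operator that descends to a bundle isometry from $(q_1, q_2)$-forms to $(n_1 - q_1, q_2)$-forms on $M_\Gamma$. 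Because it intertwines $\Delta_1$ between the two bidegrees and commutes with $\Delta_2$, it intertwines $\Delta^{q_1, q_2}(t_1, t_2)$ with $\Delta^{n_1 - q_1, q_2}(t_1, t_2)$, so
\begin{align*}
\Tr e^{-\Delta^{q_1, q_2}(t_1, t_2)} = \Tr e^{-\Delta^{n_1 - q_1, q_2}(t_1, t_2)}.
\end{align*}
Substituting $q_1 \mapsto n_1 - q_1$ in the $q_1$-summation and using $(-1)^{n_1} = 1$ gives, for each fixed $q_2$,
\begin{align*}
\sum_{q_1=0}^{n_1} (-1)^{q_1} q_1 \Tr e^{-\Delta^{q_1, q_2}(t_1, t_2)} = \frac{n_1}{2} \sum_{q_1=0}^{n_1} (-1)^{q_1} \Tr e^{-\Delta^{q_1, q_2}(t_1, t_2)}.
\end{align*}

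Next I would apply a McKean-Singer argument to the right-hand sum. Because the metric is a product, $\Delta_1$ and $\Delta_2$ commute, and a direct computation in local product coordinates shows that $d_1$ and $d_1^*$ each anticommute with $d_2$ and $d_2^*$, so $d_1$ commutes with $\Delta_2$ and hence with $e^{-t_2 \Delta_2}$. The sum in question is therefore the supertrace, with respect to the $Q_1$-grading on $(\bullet, q_2)$-forms, of the trace-class operator $e^{-t_1 \Delta_1 - t_2 \Delta_2}$. The usual cyclicity argument shows this supertrace is independent of $t_1$, and letting $t_1 \to \infty$ projects onto the $d_1$-harmonic $(\bullet, q_2)$-forms on $M_\Gamma$. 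By the K\"{u}nneth decomposition on $M_1 \times M_2$ and passage to $\Gamma$-invariants, the $d_1$-cohomology of $F_\Gamma$-valued $(\bullet, q_2)$-forms on $M_\Gamma$ is identified with a subspace of $H^\bullet(M_1, F_1) \otimes \Omega^{q_2}(M_2, F_2)$, which vanishes by the assumed acyclicity of $F_1$. Hence
\begin{align*}
\sum_{q_1=0}^{n_1} (-1)^{q_1} \Tr e^{-\Delta^{q_1, q_2}(t_1, t_2)} = 0
\end{align*}
for all $t_1, t_2 > 0$ and all $q_2$, so combined with the Hodge-duality step the full integrand of \eqref{eq: double integral for MT} vanishes identically and $MT(h_1, h_2) = 0$.

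The main technical obstacle I anticipate is justifying the McKean-Singer cyclicity manipulations and the $t_1 \to \infty$ limit in this partially degenerate setting, since $\Delta_1$ alone is only subelliptic on $M_\Gamma$ and $e^{-t_1 \Delta_1}$ is not trace-class by itself. One must instead work with the full two-parameter heat operator $e^{-t_1 \Delta_1 - t_2 \Delta_2}$, which is trace-class for $t_1, t_2 > 0$ by ellipticity of the total symbol, and invoke the multi-asymptotic properties developed for the multi-zeta regularization to control the $t_1 \to \infty$ behavior uniformly in $t_2$.
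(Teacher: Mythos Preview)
Your proposal is correct and follows essentially the same approach as the paper: conjugate by the Hodge star $\ast_1$ (which descends to $M_\Gamma$ by the orientation-preserving hypothesis) to reduce $\Tr(-1)^Q Q_1 Q_2 e^{-\Delta(t_1,t_2)}$ to $\tfrac{n_1}{2}\Tr(-1)^Q Q_2 e^{-\Delta(t_1,t_2)}$, then run a McKean--Singer cyclicity argument to show the latter is constant in $t_1$ and vanishes as $t_1\to\infty$. The paper handles the $t_1\to\infty$ limit by simply invoking the multi-admissibility theorem (which already has the acyclicity of $F_1$ built in) rather than your more explicit K\"unneth identification of the limiting projection, but this is exactly the resolution you anticipate in your final paragraph.
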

		Our main theorem  is the following metric independence
		theorem in odd dimensions.
		\begin{theorem*}[Theorem \ref{theorem: multi torsion metric independence}]
			Suppose that for either  $j = 1$ or $j=2$,
			$\dim M_j$ is odd and for every $\gamma_j \in \Gamma_j $,
			$\gamma_j$ is either orientation-preserving or has nondegenerate fixed points as a diffeomorphism of $M_j$.
			Then $MT(M_\Gamma, F_\Gamma, h_1, h_2)$ is independent of the metric $h_j$.
		\end{theorem*}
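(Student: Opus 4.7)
The plan is to adapt the Stokes' theorem strategy used for ordinary analytic torsion to the three-dimensional region swept out by varying $u$ in the space of local product metrics. Without loss of generality take $j=1$ and fix a smooth family $h_1(u)$. I would introduce the map
\[
\Phi(u, t_1, t_2) := \left( \tfrac{1}{t_1} h_1(u),\; \tfrac{1}{t_2} h_2 \right)
\]
from $(u_0, u_1) \times (0,\infty)^2$ into the space of local product metrics on $F_\Gamma$. This is the natural analogue of the rescaling curve $C_g$ that appears in the definition of $\omega_T$, augmented by a transverse direction along which $h_1$ varies. By construction, the regularized integral of $\Phi^\ast \omega_{MT}$ over the slice $\{u\} \times (0,\infty)^2$ equals $MT(M_\Gamma, F_\Gamma, h_1(u), h_2)$.

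Since $\omega_{MT}$ is closed, Cartan's formula gives $\mathcal{L}_{\partial_u} \Phi^\ast \omega_{MT} = d\, \iota_{\partial_u} \Phi^\ast \omega_{MT}$. Integrating over $(0,\infty)^2$ and applying Stokes' theorem, one should obtain
\[
\frac{d}{du} MT(h_1(u), h_2) \;=\; \sum_{\text{edges}} \text{(regularized)} \int_{\text{edge}} \iota_{\partial_u} \Phi^\ast \omega_{MT},
\]
where the four edges are $t_j \in \{0, \infty\}$ for $j=1,2$. The two $\infty$-edges vanish by a standard acyclicity argument: because $\Gamma$ is finite and $F_1$, $F_2$ are acyclic, the relevant alternating trace of $e^{-\Delta(t_1, t_2)}$ decays exponentially as $t_1 \to \infty$ or $t_2 \to \infty$. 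Only the two small-time edges can contribute.

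On each small-time edge, the integrand is controlled by the multi-asymptotic expansion of $\Tr e^{-\Delta(t_1, t_2)}$ developed for the multi-zeta theory, and the surviving contribution is a ``constant term'' in that expansion, in direct analogy with the $t^0$ coefficient appearing in the metric anomaly theorem for $\log T$. Because $M_\Gamma = \Gamma \backslash (M_1 \times M_2)$, this constant term decomposes as an identity contribution plus contributions indexed by nontrivial $\gamma = (\gamma_1, \gamma_2) \in \Gamma$. The identity contribution is an integral over $M_\Gamma$ of a local density whose $M_j$-factor is an Euler-type form in dimension $n_j$; since $n_j$ is odd, this density vanishes pointwise. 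For nontrivial $\gamma$, the contributions localize on the fixed-point sets of $\gamma_j$, and the hypothesis (either $\gamma_j$ orientation-preserving on the odd-dimensional factor $M_j$, or $\gamma_j$ has nondegenerate, hence isolated, fixed points) is exactly what is needed to kill these terms by a Lefschetz-type calculation combined with the alternating sum over the bidegree $(q_1, q_2)$.

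The main obstacle I expect is the bookkeeping around the regularization. The naive double integral \eqref{eq: double integral for MT} is divergent at the three corners $(0,0)$, $(0,\infty)$, $(\infty,0)$, so identifying $\frac{d}{du} MT$ with the edge expression above requires showing that the multi-zeta meromorphic continuation commutes with $\frac{d}{du}$ and that no anomalous corner terms appear when Stokes is applied termwise to the $(t_1, t_2) \to 0^+$ asymptotic expansion. Once this analytic bookkeeping is in place, the vanishing of the remaining local boundary density is a direct consequence of the odd-dimensionality and fixed-point hypotheses, giving the theorem.
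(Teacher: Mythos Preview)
Your overall strategy---Stokes' theorem for the closed form $\omega_{MT}$ on a three-dimensional region in $\Mprod$---is the paper's. The paper resolves the regularization obstacle you flag by inserting $t_1^{s_1}t_2^{s_2}$ \emph{inside} the Stokes identity on the finite cube $[\epsilon_1,A_1]\times[\epsilon_2,A_2]\times[0,1]$, then taking $\epsilon_j\to 0$, $A_j\to\infty$ and invoking the meromorphic continuation of the multi-zeta functions; this avoids corner anomalies entirely. One minor correction to your edge count: since $h_2$ is held fixed, $b_2$ has only a $dt_2$-component, so $\iota_{\partial_u}\Phi^\ast\omega_{MT}$ is purely a $dt_2$-form and the two $t_2$-edges contribute nothing trivially. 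After regularization the single surviving quantity is $\int_0^1\frac{\partial}{\partial s_2}\big|_{(0,0)}\zeta\bigl(s_1,s_2;\Delta(t_1,t_2,u),\alpha_\Gamma\bigr)\,du$ with $\alpha_\Gamma=(-1)^Q\ast_1^{-1}\tfrac{d\ast_1}{du}\bigl(Q_2-\tfrac{n_2}{2}\bigr)$, and the task is to show this vanishes.

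The genuine gap is in your proposed mechanism for that vanishing in the nondegenerate-fixed-point case. You invoke ``a Lefschetz-type calculation combined with the alternating sum over the bidegree $(q_1,q_2)$,'' but the alternating sum by itself does not kill the fixed-point contribution: at a nondegenerate fixed point $x_0$ of $\gamma_1$ one has, for instance, $\tr\,(-1)^{Q_1}(\gamma_1)^\ast_{x_0}=\det(I-d(\gamma_1)_{x_0})\neq 0$, so the supertrace alone is not enough. What actually forces the vanishing (the paper's Corollary~\ref{corollary: t^0 vanishing}) is the specific algebraic form of the variation operator $\ast_1^{-1}\tfrac{d\ast_1}{du}$. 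Conjugating by $\ast_1$ and using $\ast_1\tfrac{d\ast_1}{du}=-\tfrac{d\ast_1}{du}\ast_1$ together with $\ast_1(\gamma_1)^\ast=(-1)^{n_1}(\gamma_1)^\ast\ast_1$ (the latter holding because a diffeomorphism with only nondegenerate fixed points reverses orientation precisely when the dimension is odd), one computes
\[
\tr\,(-1)^{Q_1}\ast_1^{-1}\tfrac{d\ast_1}{du}(\gamma_1)^\ast_{x_0}
\;=\;-\,\tr\,(-1)^{Q_1}\ast_1^{-1}\tfrac{d\ast_1}{du}(\gamma_1)^\ast_{x_0},
\]
hence the fibrewise trace is zero at every fixed point. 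This Hodge-star identity---not a bidegree cancellation---is the missing ingredient; without it your argument for the nondegenerate case does not close. (For orientation-preserving $\gamma_1$, including the identity, the absence of a $(t_1)^0$ term is the standard odd-dimensional parity fact for equivariant heat traces, Lemma~\ref{lemma: orientation-preserving}; your ``Euler-type form'' description is imprecise but the conclusion is correct there.)
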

		
		Our proof 
		hinges on the central principle of this paper:
	 	for a certain closed two-form $\omega_{MT}$
		on the space of local product metrics,
		$MT$ is a regularized integral
		of $\omega_{MT}$ over the surface parametrized by
		$t_1$ and $t_2$.
		The variation of $MT$
		with respect to a change in metric therefore
		reduces to locally computable boundary terms that vanish under the assumptions
		of Theorem \ref{theorem: multi torsion metric independence}.

		\subsection{Outline.} This paper is organized as follows.
	In \S \ref{section: Preliminaries},  we collect some prerequisite results
	on differential forms on spaces of metrics and on heat kernels
	and zeta functions.
	In \S \ref{section: Analytic torsion of an elliptic complex},
	we review the analytic torsion of an elliptic complex
	and set up our differential form perspective
	in this simple model case.
	In \S \ref{section: multi-torsion},
	we define
	multi-zeta functions and multi-torsion.
	In \S \ref{sec: Heat kernel multi-asymptotics},
	we prove some ``multi-asymptotic"
	properties of two-parameter heat kernels.
	In \S \ref{sec: Metric independence theorem for multi-torsion},
	we introduce the multi-torsion form $\omega_{MT}$, prove that it is closed, and use this fact to prove our main theorem
	 on the metric independence of multi-torsion.
	In \S \ref{section: eta},
	which is independent of \S \ref{section: Analytic torsion of an elliptic complex}-\ref{sec: Metric independence theorem for multi-torsion},
	we study the metric dependence of
	the eta invariant
	via a closed differential form
	on the space of elliptic operators.


\section{Preliminaries}

\label{section: Preliminaries}

\subsection{Operator-valued differential forms }

\label{subsection: Differential forms}

Let $\mathcal{U}$ be a smooth manifold.
We will assume that $\U$ is finite-dimensional,
which is sufficient to obtain our results.
But in applications,
we will think of $\mathcal{U}$ as a space
parametrizing a family of metrics or a family of differential operators,
and so it will be useful to view
$\mathcal{U}$ as a submanifold
of an infinite-dimensional manifold
(such as the space of all metrics on some vector bundle).

For an associative $\mathbb{R}$- or $\mathbb{C}$-algebra $A$,
let $\Omega^k(\U, A)$ denote the space of smooth $A$-valued
exterior differential forms of degree $k$ on $\U$;
let $\Omega(\U, A) := \bigoplus_{k=0}^{\dim \U} \Omega^k(\U, A)$
denote the space of smooth $A$-valued
exterior differential forms (of all degrees) on $\U$.
For $u \in \U$ and $X_1, \dots X_k$ tangent vectors
to $\U$ at $u$,
we will write
$\omega(u; X_1, \dots, X_k) \in A$
for the evaluation of $\omega \in \Omega^k(\U, A)$
at $u$ against the $k$-tuple $(X_1, \dots, X_k)$;
by definition, for each fixed $u$,
$\omega(u; \cdot, \cdots, \cdot )$
is multilinear and alternating.

The product structure on $A$ induces
a wedge product structure on $\Omega(\U, A)$,
which need not be graded commutative in general
(but see Lemma \ref{lemma: adjoint}).
For simplicity of notation, we will omit the traditional symbol ``$\wedge$'';
i.e., for $\kappa \in \Omega^k(\U, A)$ and $\lambda \in \Omega^l(\U, A) $,
their wedge product will be denoted simply by $\kappa \lambda \in \Omega^{k+l}(\M, A)$.

We will denote the exterior derivative $\Omega^k(\U, A) \to \Omega^{k+1}(\U, A) $
by $\delta^\mathcal{U}$, or simply by $\delta$ if $\U$ is understood.
If $\delta^\U \omega = 0$, then we will say $\omega$ is closed.	
The following familiar properties
hold in this context: $\delta^\U \circ \delta^\U = 0$,
the graded Leibniz rule, and Stokes' theorem.

Now let $M$ be a compact manifold, and let $E \to M$ be a real or complex vector bundle.
By a metric on $E$, we shall mean a smoothly varying inner product on the fibers of $E$;
this inner product is euclidean (if $E$ is a real vector bundle) or hermitian (if $E$ is a complex vector bundle).
To simplify the exposition, we will assume
that $E$ is a real vector bundle for now,
but the results hold in the complex case with minor modifications.
Let $\M(E)$, or simply $\M$ if $E$ is understood,
denote the set of all metrics on $E$.
which is a Fr\'{e}chet manifold.

Let us choose a density $\vol_0$ on the base manifold $M$.
Then each choice of metric $h \in \M$ induces an $L^2$-inner product on smooth sections of $E$, $\ip{\cdot}{\cdot}_h$, defined by
\begin{equation} \label{equation: inner product}
\ip{\cdot}{\cdot}_h : = \int_M h(\cdot, \cdot)_x \, \vol_0(x),
\end{equation}
where $h(\cdot, \cdot)_x$ denotes the inner product on the fiber $E_x$.
The $L^2$ space $L^2_h(M, E)$ and the $L^2$-Sobolev spaces $H^{s}_h(M, E)$ for $s \in \mathbb{R}$, are defined in the usual way.
Any two metrics
induce equivalent norms since $M$ is compact, so the topological vector spaces
$L^2(M, E)$ and $H^{s}(M, E)$,
and the notion of a bounded operator, are independent of the metric.
Let $\mathcal{B}$ denote the unital associative algebra of bounded
linear endomorphisms of $L^2(M, E)$.

\begin{remark} \label{remark: fix density}
	In \eqref{equation: inner product}, replacing $\vol_0$ by $\varphi \, \vol_0$,
	where $\varphi: M \to \mathbb{R}$ is a smooth positive function,
	induces the same $L^2$ inner product as replacing $h$ by $\varphi h$ and keeping $\vol_0$ the same.
	Because of this redundancy, we will sometimes decide to fix a density $\vol_0$.
\end{remark}

Let $\Psi$ denote the unital associative algebra of pseudodifferential operators
$C^\infty(M, E) \to C^\infty(M, E)$,
and for $s \in \mathbb{R}$, let $\Psi^s$ denote the subset of $\Psi$ consisting of pseudodifferential operators
of order (less than or equal to) $s$.
Note that if $s < t$, then $\Psi^s \subset \Psi^t$.
If $s \leq 0$, then $\Psi^s$ forms an algebra since it is closed under composition
(in general, $\Psi^s \cdot \Psi^t \subset \Psi^{s+t}$).
Note that neither $\mathcal{B}$ nor $\Psi$ is contained in the other,
but if $s \leq 0$, then $\Psi^s \subset \mathcal{B}$.

For a metric $h \in \M$ and an operator $\phi$ either in $\mathcal{B}$ or in $\Psi$,
the $h$-adjoint of $\phi$, denoted $\phi^{\ast_h}$,
is the operator either in $\mathcal{B}$ or in $\Psi$, respectively,
characterized by, for smooth sections $a,b$ of $E$,
\begin{equation*}
\ip{\phi a}{b}_h = \ip{a}{\phi^{\ast_h} b}_h.
\end{equation*}
If $\phi \in \mathcal{B}$, then
since smooth sections are dense in $L^2$ sections, this defines $\phi^{\ast_h}$ uniquely
as an element of $\mathcal{B}$.
We will say that an operator $\phi$
is symmetric with respect to $h$ if $\phi^{\ast_h} = \phi$
(on smooth sections).

Now suppose that we have fixed a map $\U \to \M$;
$\U$ may be a submanifold of $\M$, in which case we
take this map to be the inclusion $\U \hookrightarrow \M$.
This allows us to define the adjoint $\omega^\ast \in \Omega^k(\U, A)$
of a differential form $\omega \in \Omega^k(\U, A)$,
where $A = \mathcal{B}$ or $A = \Psi$,
by
\begin{equation*}
(\omega^\ast)(u; X_1, \dots, X_k) := (\omega(u;X_1, \dots, X_k))^{\ast_u}
\end{equation*}
for all $u \in \U$ and for all $X_1, \dots, X_k$ that are tangent vectors
to $\U$ at $u$.
The latter adjoint is defined with respect
to the metric associated to $u\in \U$ via the map $\U \to \M$.
We will say that $\omega$ is symmetric when $\omega^\ast = \omega$.

For a trace-class operator $\phi \in \mathcal{B}$,
its trace $\Tr \phi$ is well-defined
independent of the metric by Lidskii's theorem.
If $\phi$ is a finite-rank operator, then
we will denote its trace by $\tr \phi$.

\begin{remark} \label{remark: trace-class inclusion}
	We will find the following well-known fact useful.
	Let $n$ be the dimension of the base manifold $M$.
	Then if $s < -n$ and if $\phi \in \Psi^s \subset \mathcal{B}$, then $\phi$ is trace-class. 
\end{remark}

We will say that $\omega \in \Omega^k(\U, \mathcal{B})$ is trace-class when
for every $u \in \U$ and $X_1, \dots, X_k$ tangent to $\U$ at $u$,
$\omega(u; X_1, \dots, X_k) \in \mathcal{B}$ is trace-class.
If $\omega$ is trace-class, $\Tr \omega \in \Omega^k(\U, \mathbb{C})$ will denote the $\mathbb{C}$-valued $k$-form on $\U$
defined by
\begin{align*}
(\Tr \omega)(u; X_1, \dots, X_k) := \Tr \left(\omega(u; X_1, \dots, X_k)\right),
\end{align*}
and
$\Trconj \omega \in \Omega^k(\M, \mathbb{C})$ will denote the complex conjugate of $\Tr \omega$,
defined by
\begin{align*}
(\Trconj \omega)(u; X_1, \dots, X_k) := \conj{ \Tr \left(\omega(u; X_1, \dots, X_k)\right)}.
\end{align*}

We have the following results concerning the interactions
between products, adjoints, and traces:
\begin{lemma} \label{lemma: adjoint} 
	Let $\kappa \in \Omega^k(\U, A)$ and $\lambda \in \Omega^l(\U, A)$,
	where $A = \mathcal{B}$ or $A = \Psi$.
	Fix a map $\U \to \M$
	that will define the adjoint operation.
	Then we have:
	\begin{enumerate}
		\item $(\kappa \lambda)^\ast = (-1)^{kl} \lambda^\ast \kappa^\ast $.
		\item  $\kappa$ is trace-class if and only if $\kappa^\ast$ is trace-class, and 
		in that case, 
		$  \Tr \kappa =  \Trconj \kappa^\ast$.
		\item  If $\kappa \lambda$ is trace-class, then
		$   \Tr \kappa \lambda =  \Trconj (\kappa \lambda)^\ast  
		= (-1)^{kl} \,  \Trconj \lambda^\ast \kappa^\ast $.
		\item  If $\kappa \lambda$ and $\lambda \kappa$ are both trace-class, then
		$     \Tr \kappa \lambda = (-1)^{kl} \Tr \lambda \kappa$.
	\end{enumerate}
\end{lemma}

\begin{proof}
	The properties follow from the corresponding properties for operators
	and the alternating property of differential forms.
\end{proof}

\begin{coro}  
	Let $\kappa_1, \dots, \kappa_r \in \Omega^1(\U, A)$,
	where $A = \mathcal{B}$ or $A = \Psi$.
	Fix a map $\U \to \M$
	that will define the adjoint operation.
	Let $\epsilon_r := (-1)^{\frac{1}{2}r(r-1)}$.
	Then 
	\begin{equation*}
	(\kappa_1 \dotsm \kappa_r)^\ast = \epsilon_r \, \kappa_r^\ast \dotsm \kappa_1^\ast,
	\end{equation*}
	and if $\kappa_1 \dotsm \kappa_r$ is trace-class, then
	\begin{align*}
	\Tr \kappa_1 \dotsm \kappa_r &=  \Trconj (\kappa_1 \dotsm \kappa_r)^\ast  \\
	&= \epsilon_r  \Trconj \kappa_r^\ast \dotsm \kappa_1^\ast .
	\end{align*}
\end{coro}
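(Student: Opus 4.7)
The plan is a straightforward induction on $r$ using Lemma \ref{lemma:adjoint}. The base case $r=1$ is trivial, and $r=2$ is exactly part (1) of the lemma, giving $\epsilon_2 = -1 = (-1)^{2 \cdot 1 /2}$.

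For the inductive step, I would write $\kappa_1 \dotsm \kappa_{r+1} = (\kappa_1 \dotsm \kappa_r)\kappa_{r+1}$. The first factor is an $r$-form and the second is a $1$-form, so applying part (1) of Lemma \ref{lemma:adjoint} gives
\begin{equation*}
(\kappa_1 \dotsm \kappa_{r+1})^\ast = (-1)^{r} \kappa_{r+1}^\ast (\kappa_1 \dotsm \kappa_r)^\ast = (-1)^r \epsilon_r \, \kappa_{r+1}^\ast \kappa_r^\ast \dotsm \kappa_1^\ast,
\end{equation*}
using the inductive hypothesis in the second equality. The sign then simplifies via the identity $r + \tfrac{1}{2}r(r-1) = \tfrac{1}{2}r(r+1)$, which shows $(-1)^r \epsilon_r = \epsilon_{r+1}$ and completes the induction.

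For the trace identity, I would apply part (2) of the lemma to the $k$-form $\kappa_1 \dotsm \kappa_r$ (with $k = r$), which gives $\Tr \kappa_1 \dotsm \kappa_r = \Trconj (\kappa_1 \dotsm \kappa_r)^\ast$; substituting the formula for the adjoint just derived then yields the stated expression $\epsilon_r \Trconj \kappa_r^\ast \dotsm \kappa_1^\ast$. The only subtlety worth noting is that each $\kappa_i$ being a one-form is not itself an operator but assigns an operator to each tangent vector, so strictly speaking the identities should be verified pointwise by evaluating on tuples of tangent vectors; but this is handled uniformly by the alternating-multilinear setup already used in the proof of Lemma \ref{lemma:adjoint}, so no new ideas are required. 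There is no real obstacle here; the content is purely bookkeeping of signs.
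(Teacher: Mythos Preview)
Your proposal is correct and follows essentially the same approach as the paper: induction on $r$ using Lemma \ref{lemma:adjoint}, with the sign $\epsilon_r$ identified as the sign of the order-reversing permutation on $r$ letters. The paper's proof is terser but the content is identical.
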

\begin{proof}
	The assertions follow from Lemma \ref{lemma: adjoint} by induction on $r$.
\end{proof}

Finally,  the trace commutes with the exterior derivative:
\begin{lemma}
	If $\kappa \in \Omega^k(\U, \mathcal{B})$ is trace-class, then $\delta^\U \kappa$ is also trace-class,
	and $\delta^\U (\Tr \kappa) = \Tr (\delta^\U \kappa)$.
\end{lemma}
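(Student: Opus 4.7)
The plan is to localize on $\U$ and then reduce the statement to the scalar fact that the trace functional commutes with differentiation of trace-class-valued curves. Choose local coordinates $u^1, \dots, u^m$ on an open set in $\U$ and write $\kappa = \sum_I \kappa_I \, du^I$, where $I$ runs over ordered multi-indices of length $k$ and each $\kappa_I \colon \U \to \mathcal{B}$ is a smooth operator-valued function that, by hypothesis, takes trace-class values pointwise. Since the scalar factors $du^I$ are $u$-independent, both trace-class-ness and the claimed intertwining formula are $\mathbb{R}$-linear in the coefficient functions, so it suffices to prove: for a smooth, pointwise trace-class function $\phi \colon (a,b) \to \mathcal{B}$, the derivative $\phi'(u)$ is again trace-class, and $\frac{d}{du}\Tr \phi(u) = \Tr \phi'(u)$. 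The first assertion of the lemma then follows because $\delta^\U\kappa$ is built coefficient-wise from partial derivatives $\p_j \kappa_I$, and the second follows by matching the corresponding coefficient expansions of $\delta^\U(\Tr\kappa)$ and $\Tr(\delta^\U\kappa)$.

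For the scalar statement, the main analytic input is that the trace is a bounded linear functional on the ideal of trace-class operators, satisfying $|\Tr S| \leq \|S\|_1$ where $\|\cdot\|_1$ denotes the trace norm. Each difference quotient $\frac{\phi(u+h) - \phi(u)}{h}$ is a scalar multiple of a difference of trace-class operators, hence is trace-class; assuming these difference quotients converge in trace norm to $\phi'(u)$, the limit lies in the trace ideal and
\begin{align*}
\frac{d}{du}\Tr\phi(u) \;=\; \lim_{h \to 0} \Tr \frac{\phi(u+h) - \phi(u)}{h} \;=\; \Tr \lim_{h \to 0}\frac{\phi(u+h) - \phi(u)}{h} \;=\; \Tr \phi'(u),
\end{align*}
where the interchange of trace and limit is justified by the bound above.

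The main obstacle is not algebraic but functional-analytic, namely verifying that ``smooth as a map into $\mathcal{B}$ together with pointwise trace-class values'' propagates to smoothness in the stronger trace-norm topology, so that the difference quotients actually converge in $\|\cdot\|_1$. In the applications that occur in the rest of the paper, $\kappa$ is built from heat operators $e^{-tL}$ of elliptic operators and their derivatives along parameter curves; these depend smoothly in much stronger topologies (in particular in all Schatten norms away from $t=0$) by standard heat-kernel estimates, so the required trace-norm smoothness holds and the lemma follows directly from the continuity argument above.
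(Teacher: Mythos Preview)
The paper states this lemma without proof, so there is nothing to compare against at the level of argument; your proposal supplies what the paper omits.

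Your reduction to a one-parameter family via local coordinates is the right move, and the core step---continuity of $\Tr$ in trace norm applied to difference quotients---is correct \emph{once} trace-norm differentiability is known. You are also right to flag the gap: the paper defines $\kappa \in \Omega^k(\U,\mathcal{B})$ to be smooth with respect to the operator-norm topology on $\mathcal{B}$, and defines ``trace-class'' pointwise. From those hypotheses alone it is not automatic that the operator-norm derivative $\phi'(u)$ lands in the trace ideal, nor that the difference quotients converge in $\|\cdot\|_1$. Indeed, for finite differentiability one can write down diagonal families $\phi(u)=\sum_n a_n(u)P_n$ that are $C^k$ in operator norm, pointwise trace-class, yet with $\phi'$ not trace-class; the $C^\infty$ case is more delicate but there is no general mechanism forcing trace-norm regularity from operator-norm regularity.

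Your resolution is exactly the one appropriate to the paper: every form to which the lemma is actually applied is built from heat operators $e^{-t\Delta}$ or resolvent powers $R_z^N$ of elliptic operators, multiplied by differential operators; these are smoothing and vary smoothly in every Schatten norm by standard parametrix/Duhamel estimates. So your final paragraph does prove the lemma in the generality the paper needs. It would sharpen your write-up to state explicitly that you are proving the lemma under the (mild, and always satisfied here) additional hypothesis that $\kappa$ is smooth as a map into the trace class, rather than leaving it as an ``assuming'' clause midway through.
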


\subsection{Heat kernels and zeta functions}
\label{subsection: Heat kernels and zeta functions}

Here we establish notation and recall some
well-known results,
originally due to, among others,
Minakshisundaram-Pleijel \cite{minakshisundaram-pleijel} and
Seeley \cite{seeley}, regarding elliptic operators
and their associated heat kernels and zeta functions.

\begin{defn} \label{defn: admissible}
	We will say that a smooth $\mathbb{C}$-valued function $h(t)$, $t > 0$,
	is admissible when it satisfies both of the following conditions:
	\begin{enumerate}[label={(A\arabic*)}]
		\item \label{A1}  There exist real powers $p_0 < p_1 < p_2 < \cdots \to \infty$
		and complex coefficients $a_{p_0}, a_{p_1}, a_{p_2}, \dots $
		such that for every real number $K$, there exists $J_K$ such that
		\begin{equation}  \label{eqn: abstract as exp}
		h(t) = \sum_{j=0}^{J_K}  a_{p_j} t^{p_j} + r_K(t),
		\end{equation}
		where the remainder $r_K(t)$ is $O(t^K)$ as $t \to 0^+$.
		In this case, we will write $h(t) \sim \sum_{j \geq 0} a_{p_j} t^{p_j}$
		and refer to this as the asymptotic expansion of $h(t)$ 
		as $t \to 0^+$.
		For each $q$ such that $t^q$ does not appear in this asymptotic expansion, our convention is to set $a_q = 0$.
		We will use the notation
		$[h(t)]_{t^q} := a_q$.
		\item \label{A2} There exists $\epsilon >0$
		such that $h(t)$ is $O(e^{-\epsilon t})$ as $t \to \infty$.
	\end{enumerate}
\end{defn}
The Mellin transform of an admissible function $h(t)$
gives its associated zeta function $\zeta(s)$,
defined by the following, for $s \in \mathbb{C}$ with
$\Real s $ sufficiently large:
\begin{equation} \label{equation: def of zeta}
\zeta(s) :=  \frac{1}{\Gamma(s)} \int_0^\infty   t^s \, h(t) \, \frac{dt}{t}.
\end{equation}
One can compute the integral in
\eqref{equation: def of zeta} explicitly
up to a holomorphic remainder
to obtain:
\begin{lemma} \label{lemma: zeta properties}
	Suppose $h(t)$ is admissible.
	Let $I(s) := \int_0^\infty   t^s \, h(t) \, \frac{dt}{t}$.
	Then the integral defining $I(s)$ converges for $\Real s > -p_0$
	and defines a holomorphic function there. Furthermore,
	$I(s)$ admits a unique meromorphic extension to all of $\mathbb{C}$
	whose only poles are simple poles at (for $j = 0, 1, 2, \dots$)
	$s = - p_j$, at which the residue is $[h(t)]_{t^{p_j}}$.
\end{lemma}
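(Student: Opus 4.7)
The plan is the standard splitting $I(s) = I_0(s) + I_\infty(s)$ at $t=1$, treating the two pieces separately. Write
\begin{equation*}
I_0(s) := \int_0^1 t^s \, h(t) \, \frac{dt}{t}, \qquad I_\infty(s) := \int_1^\infty t^s \, h(t) \, \frac{dt}{t}.
\end{equation*}
For $I_\infty(s)$, condition \ref{A2} gives $|t^{s-1} h(t)| \leq C t^{\Real s - 1} e^{-\epsilon t}$ on $[1,\infty)$, which is integrable for every $s \in \mathbb{C}$ and locally uniformly bounded in $s$. Morera's theorem (or differentiation under the integral) then shows $I_\infty(s)$ is entire.

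For $I_0(s)$, I would use \ref{A1} to write, for any $K \in \mathbb{R}$,
\begin{equation*}
h(t) = \sum_{j=0}^{J_K} a_{p_j} t^{p_j} + r_K(t), \qquad r_K(t) = O(t^K) \text{ as } t \to 0^+,
\end{equation*}
and split the integral accordingly:
\begin{equation*}
I_0(s) = \sum_{j=0}^{J_K} a_{p_j} \int_0^1 t^{s + p_j - 1} \, dt + \int_0^1 t^{s-1} r_K(t) \, dt.
\end{equation*}
Each elementary integral converges for $\Real s + p_j > 0$ to $\frac{1}{s+p_j}$, which extends to a meromorphic function of $s \in \mathbb{C}$ with a single simple pole at $s = -p_j$ of residue $1$, contributing residue $a_{p_j}$ to $I_0(s)$. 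The remainder integrand satisfies $|t^{s-1} r_K(t)| \leq C t^{\Real s + K - 1}$, which is integrable on $(0,1]$ for $\Real s > -K$; a Morera/dominated-convergence argument shows the remainder integral is holomorphic on $\{\Real s > -K\}$. Taking $K \to \infty$ gives a consistent meromorphic extension to all of $\mathbb{C}$ with the claimed pole structure, and the representation shows convergence on $\Real s > -p_0$ (the case where no term contributes a pole in the chosen half-plane). Uniqueness of the meromorphic continuation follows from the identity theorem.

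There is no genuine obstacle here; the argument is a routine Mellin-transform computation. The only point requiring mild care is checking that the pole structure is well-defined and independent of $K$: different choices of $K$ simply absorb more asymptotic terms into the explicit sum versus the holomorphic remainder, so the set of poles and residues is intrinsic. One should also note that the formula $\zeta(s) = I(s)/\Gamma(s)$ is not asserted here to be meromorphic with specific poles — the simple poles of $I$ at $s = -p_j$ may be cancelled by zeros of $1/\Gamma(s)$ when $p_j \in \{0,-1,-2,\dots\}$, but that is a separate matter not part of this lemma.
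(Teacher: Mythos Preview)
Your proposal is correct and matches the paper's approach: the paper does not write out a detailed proof of this lemma but simply remarks that ``using conditions \ref{A1} and \ref{A2}, one can compute the integral in \eqref{equation: def of zeta} explicitly up to a holomorphic remainder,'' which is exactly the splitting-at-$t=1$ argument you give (and which the paper later spells out in full for the two-variable analogue, Lemma~\ref{lemma: multi-zeta properties}).
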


From the facts that $\frac{1}{\Gamma(s)}$ is an entire function
and $\frac{1}{\Gamma(s)} = s + O(s^2)$ as $s \to 0$, one obtains:

\begin{coro} \label{coro: zeta properties}
	Suppose $h(t)$ is admissible. Then the associated zeta function
	$\zeta(s)$ admits a unique meromorphic extension to
	all of $\mathbb{C}$. Furthermore, the extension, which we also denote by $\zeta(s)$, is holomorphic at the origin, where its value is
	$  \zeta(0) = [h(t)]_{t^0}$.
	The extension has at worst a simple pole at $s=\frac{1}{2}$, at which the
	residue is $\frac{1}{\Gamma(\frac{1}{2})}[h(t)]_{t^{-1/2}}$.
\end{coro}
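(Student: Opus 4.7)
The plan is to deduce the corollary directly from Lemma \ref{lemma: zeta properties} by combining the meromorphic structure of $I(s)$ there established with the well-known analytic behavior of $\frac{1}{\Gamma(s)}$. Since $\zeta(s) = \frac{1}{\Gamma(s)} I(s)$ and $\frac{1}{\Gamma(s)}$ is entire, the lemma immediately provides the unique meromorphic extension of $\zeta$ to all of $\mathbb{C}$; uniqueness of the extension follows from the identity principle applied on the half-plane $\Real s > -p_0$ where the original integral converges.

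For the value at the origin, I would use the expansion $\frac{1}{\Gamma(s)} = s + O(s^2)$ as $s \to 0$. Split into cases: if $p_j = 0$ for some (necessarily unique) $j$, then by the lemma $I(s)$ has a simple pole at $s=0$ with residue $[h(t)]_{t^0}$, and the simple zero of $\frac{1}{\Gamma(s)}$ cancels this pole, yielding $\zeta(0) = [h(t)]_{t^0}$ as a limit. If $0$ does not occur among the $p_j$, then $I(s)$ is holomorphic at $s=0$, so $\zeta(s) = \frac{1}{\Gamma(s)} I(s)$ vanishes there, which matches the stated formula under the convention $[h(t)]_{t^0} = 0$. In either situation the single formula $\zeta(0) = [h(t)]_{t^0}$ holds.

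At $s = \tfrac{1}{2}$, the factor $\frac{1}{\Gamma(s)}$ takes the finite nonzero value $\frac{1}{\Gamma(1/2)}$, so it contributes no extra zero or pole. The pole structure of $\zeta$ at $s=\tfrac{1}{2}$ is therefore identical to that of $I$ at $s=\tfrac{1}{2}$: the lemma gives a simple pole precisely when $-\tfrac{1}{2}$ appears among the $p_j$, with residue $[h(t)]_{t^{-1/2}}$, and multiplication by $\frac{1}{\Gamma(1/2)}$ yields the claimed residue for $\zeta$. When $-\tfrac{1}{2}$ does not appear, both sides vanish by our convention.

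There is no real obstacle here; the corollary is essentially a packaging of the lemma together with two facts about $\Gamma$. The only subtlety worth handling cleanly is the uniform treatment of the two cases (whether or not a given exponent appears in the asymptotic expansion), which is precisely why the convention $a_q = 0$ when $t^q$ is absent was introduced in Definition \ref{defn: admissible}.
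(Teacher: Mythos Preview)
Your proposal is correct and follows exactly the approach indicated in the paper, which simply notes that the corollary follows immediately from Lemma~\ref{lemma: zeta properties} using that $\frac{1}{\Gamma(s)}$ is entire with $\frac{1}{\Gamma(s)} = s + O(s^2)$ as $s \to 0$. Your treatment is more explicit about the case analysis, but the substance is identical.
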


%

Now let $L$ be an elliptic differential operator of order $m$
on sections of a vector bundle $E$ over a compact manifold $M$
of dimension $n$.
Assume $L$ is nonnegative and symmetric
with respect to some  metric on $E$.
Let $e^{-tL}$ denote the heat operator.
Let $Q$ be a differential operator of order $q \geq 0$ on
sections of $E$.
Let $\Pi_{\ker L}$ denote the orthogonal projection
$L^2(M, E) \to \ker L $.
For a trace-class operator $A$, let
$\Tr' A : = \Tr A  \left( I -  \Pi_{\ker L} \right)$.
Then we have the well-known result:

\begin{theorem}
	Let $L$ and $Q$ satisfy the assumptions above.
	Then $\Tr' Q e^{-tL}$ is an admissible function of $t > 0$,
	and the asymptotic expansion of \ref{A1} takes the form
	\begin{equation}  \label{equation: heat trace-prime asymptotics}
	\Tr' Q e^{-tL} \sim - \tr Q \Pi_{\ker L} +  \sum_{j \geq 0, \, q-j \text{ even}} [\Tr Q e^{-tL}]_{t^{\frac{-n -q+ j}{m}}} t^{\frac{-n -q + j}{m}},
	\end{equation}
	where the coefficients $[\Tr Q e^{-tL}]_{t^{a}}$ are the integrals of data
	that are locally computable
	in terms of the symbols of $Q$ and $L$. 
\end{theorem}

This allows one to
define the zeta function associated to the elliptic operator $L$ and 
auxiliary operator $Q$
as in \eqref{equation: def of zeta} with $h(t) = \Tr' Q e^{-tL} $, i.e.,
\begin{equation} \label{equation: heat zeta def}
\zeta(s; L, Q) : = \frac{1}{\Gamma(s)} \int_0^\infty   t^s \, \Tr' Q e^{-tL} \, \frac{dt}{t}.
\end{equation}
In the case when $Q$ is the identity, set $\zeta(s; L ):= \zeta(s; L, I)$.
Following Ray-Singer \cite{ray-singer-71},
one defines the zeta-regularized determinant
$\det L$ by
\begin{align}
\label{equation: determinant}
\det L : = \exp \left( - \left. \frac{d}{ds}\right|_{s=0} \zeta(s; L) \right),
\end{align}
which may be interpreted as a regularized product of the nonzero eigenvalues of
$L$.

Corollary \ref{coro: zeta properties} implies immediately
the following,
which we record
for applications to analytic torsion and the eta invariant.
\begin{theorem}  \label{theorem: L zeta properties}
	The  integral defining $\zeta(s; L, Q)$ in \eqref{equation: heat zeta def} converges for $\Real s > \frac{n+q}{m}$
	and defines a holomorphic function there. Furthermore,
	$\zeta(s; L, Q)$ admits a unique meromorphic extension to all of $\mathbb{C}$.
	The extension, which we also denote by $\zeta(s; L, Q)$,
	is holomorphic at the origin, where its value is
	\begin{equation} \label{equation: value of zeta at zero}
	\zeta(0; L, Q)  = [\Tr Q e^{-tL}]_{t^0} - \tr Q \Pi_{\ker L}.
	\end{equation}
	The extension has at worst a simple pole at $s=\frac{1}{2}$, 
	where the
	residue is $\frac{1}{\Gamma(\frac{1}{2})}[\Tr Q e^{-tL}]_{t^{-1/2}}$.
\end{theorem}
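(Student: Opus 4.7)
The plan is to invoke Corollary \ref{coro: zeta properties} with $h(t) := \Tr' Q e^{-tL}$. The theorem immediately preceding already establishes that $h(t)$ is admissible, i.e.\ satisfies both \ref{A1} and \ref{A2}, and records the precise asymptotic expansion \eqref{equation: heat trace-prime asymptotics}. Once this is accepted as input, the statement reduces to a direct application of the abstract results on zeta functions of admissible functions developed in Lemma \ref{lemma: zeta properties} and Corollary \ref{coro: zeta properties}.

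First I would read off the leading exponent in \eqref{equation: heat trace-prime asymptotics}. The powers of $t$ that appear are $p_j = \frac{-n-q+j}{m}$ for $j \geq 0$ with $q-j$ even, together with a contribution at $t^{0}$ from the kernel-projection correction $-\tr Q \Pi_{\ker L}$. The smallest such exponent is $p_0 = \frac{-n-q}{m}$. Lemma \ref{lemma: zeta properties} then gives convergence and holomorphy of $I(s) = \int_0^\infty t^s \, \Tr' Q e^{-tL} \, \frac{dt}{t}$ on the half-plane $\Real s > -p_0 = \frac{n+q}{m}$, together with the meromorphic extension of $\zeta(s; L, Q) = I(s)/\Gamma(s)$ to all of $\mathbb{C}$; since $1/\Gamma$ is entire, the only possible singularities are simple poles at $s = -p_j$.

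For the value at the origin, Corollary \ref{coro: zeta properties} yields
\[
\zeta(0; L, Q) = [\Tr' Q e^{-tL}]_{t^0},
\]
and reading the $t^{0}$ coefficient directly from \eqref{equation: heat trace-prime asymptotics} produces $[\Tr Q e^{-tL}]_{t^0} - \tr Q \Pi_{\ker L}$, which is exactly \eqref{equation: value of zeta at zero}. Likewise, Corollary \ref{coro: zeta properties} says the residue at $s = \tfrac{1}{2}$ is $\frac{1}{\Gamma(1/2)}[\Tr' Q e^{-tL}]_{t^{-1/2}}$; because the kernel-projection correction in \eqref{equation: heat trace-prime asymptotics} perturbs only the $t^{0}$ coefficient, the $t^{-1/2}$ coefficients of $\Tr' Q e^{-tL}$ and of $\Tr Q e^{-tL}$ coincide, yielding the stated formula.

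The only step requiring genuine analytic work is the cited heat-trace asymptotic theorem, which rests on the pseudodifferential/heat-kernel calculus of Minakshisundaram--Pleijel and Seeley together with the spectral gap of $L$ on $(\ker L)^\perp$ that provides the exponential decay as $t \to \infty$; in this paper this is taken as background. Given those inputs, the present theorem is a bookkeeping exercise and no real obstacle arises.
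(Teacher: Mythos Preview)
Your proposal is correct and matches the paper's approach exactly: the paper states that this theorem follows ``immediately'' from Corollary~\ref{coro: zeta properties} applied to the admissible function $h(t)=\Tr' Q e^{-tL}$, using the asymptotic expansion \eqref{equation: heat trace-prime asymptotics} to read off the relevant coefficients. Your observation that the kernel-projection term only shifts the $t^0$ coefficient, so that $[\Tr' Q e^{-tL}]_{t^{-1/2}} = [\Tr Q e^{-tL}]_{t^{-1/2}}$, is precisely the bookkeeping the paper leaves implicit.
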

\begin{coro} \label{coro: a'_0 vanishes}
	If the dimension $n$ is odd and $\ker L$ is trivial, then for any $Q$,
	$\zeta(0; L, Q)  = 0$.
\end{coro}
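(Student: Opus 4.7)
The plan is to read off the conclusion directly from equation \eqref{equation: value of zeta at zero}, using the parity constraint built into the asymptotic expansion \eqref{equation: heat trace asymptotics}. Since $\ker L$ is assumed trivial, the projection $\Pi_{\ker L}$ is the zero operator, so the term $\tr Q \Pi_{\ker L}$ in \eqref{equation: value of zeta at zero} vanishes. It remains to show that the constant term $[\Tr Q e^{-tL}]_{t^0}$ in the short-time asymptotic expansion also vanishes.

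For this I would inspect which powers of $t$ actually appear in \eqref{equation: heat trace asymptotics}. The exponents are of the form $\frac{-n-q+j}{m}$ for integers $j \geq 0$ subject to the parity constraint that $q-j$ be even. A $t^0$ term is present only if there exists such a $j$ with $-n-q+j = 0$, i.e., $j = n+q$. Substituting into the parity constraint yields $q - (n+q) = -n$, which must be even. Since $n$ is odd by hypothesis, this is impossible, so no $t^0$ term occurs in the expansion and hence $[\Tr Q e^{-tL}]_{t^0} = 0$.

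Combining these two observations with \eqref{equation: value of zeta at zero} gives $\zeta(0; L, Q) = 0$, completing the proof. There is really no obstacle here: the result is essentially bookkeeping on the parity of the Minakshisundaram–Pleijel–Seeley expansion, and the only subtle point is ensuring one correctly tracks the parity condition $q - j$ even from the statement of the asymptotic expansion.
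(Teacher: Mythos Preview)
Your proposal is correct and follows exactly the same approach as the paper's proof: both use \eqref{equation: value of zeta at zero} and observe that the kernel term vanishes by triviality of $\ker L$ while the $t^0$ coefficient vanishes by the parity constraint in \eqref{equation: heat trace asymptotics}. The only difference is that you spell out the parity bookkeeping explicitly (solving $\frac{-n-q+j}{m}=0$ and checking $q-j\equiv -n$), whereas the paper simply asserts that no $t^0$ term appears when $n$ is odd.
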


Finally, we note that we will often find it useful to work with
the resolvent rather than directly with the heat operator.
For $z \in \mathbb{C}$ not in the spectrum of $L$,
let $R_z := (z - L)^{-1} \in \Psi^{-m}$ denote the resolvent.
Let $N$ be an integer greater than $ \frac{n+q}{m}$, which ensures
$Q R_z^N \in \Psi^{q-Nm}$ is trace-class by Remark \ref{remark: trace-class inclusion}.
Let $C$ be a contour in the complex plane
surrounding the interval $[0, \infty) \subset \mathbb{R} \subset \mathbb{C}$.
Then the Cauchy integral formula gives that
\begin{equation} \label{eq: CIF with trace}
\Tr Q e^{-tL}   =  \frac{1}{2\pi i} \frac{1}{(N-1)!}  \int_C e^{-tz} \, \Tr Q R_z^N \, dz.
\end{equation}


\section{Analytic torsion of an elliptic complex}

\label{section: Analytic torsion of an elliptic complex}

\subsection{Definition of torsion}

\label{subsection: Definition of torsion}

Suppose $M$ is a compact manifold,
$E = \bigoplus_{q=0}^r E^q$ is a $\mathbb{Z}$-graded vector bundle over $M$,
and $d = \bigoplus_{q=0}^r d_q$ is a differential operator
giving an elliptic complex
\begin{equation*}
0 \longrightarrow C^\infty(M,E^0) \overset{d_0}{\longrightarrow} C^\infty(M,E^1) \overset{d_1}{\longrightarrow}
\dotsm \overset{d_{r-1}}{\longrightarrow} C^\infty(M,E^r) \longrightarrow 0.
\end{equation*}
In this section, let $\M$ denote the space of metrics on $E$ such that the subbundles $E^q$
are mutually orthogonal.
Fix a density $\vol_0$ on $M$,
which we may do without loss of generality
(see Remark \ref{remark: fix density}).
For each metric $h \in \M$,
let $\dstarh$ be the formal adjoint of $d$ with respect to the
$L^2$ inner product associated to $h$.
The Laplacian $\Delta^h$ associated to a metric $h \in \M$
is $\Delta^h := (d + d^{\ast_h})^2 
= d d^{\ast_h} + d^{\ast_h} d$.
Let $\Delta^h_q = d_{q-1} d^{\ast_h}_{q-1} + d^{\ast_h}_q d_q$ denote the restriction of $\Delta^h$ to sections of $E^q$.
Let $m$ be the order of $\Delta$ as a differential operator, which is twice the order of $d$.
$\Delta^h$ is elliptic and, with respect to $\ip{\cdot}{\cdot}_h$, symmetric and nonnegative.
The elliptic complex $(E,d)$ is said to be acyclic
when the cohomology
$H^q(E,d):=\ker d_q / \im d_{q-1} $ is trivial for $q=0, \dots, r$, 
or, equivalently by the Hodge theorem,
when $\Delta^h$ has trivial kernel.

The following notation will be useful:
Let $Q$ act on sections of $E^q$ as multiplication by $q$.
More generally, for $f: \mathbb{Z}_{\geq 0} \to \mathbb{C}$,
let $f(Q)$ act on sections of $E^q$ as multiplication by $f(q)$.

\begin{defn}[\cites{ray-singer-71, schwarz}] \label{definition: torsion}
	 The analytic torsion $T(M, E, h)$ associated to the elliptic complex $(E,d)$ and a metric $h \in \M$ is defined by
	\begin{equation} \label{eq: def of torsion}
	\log T(M,E, h) := \frac{1}{2}  \left. \frac{d}{ds} \right|_{s=0}  \zeta \left( s ; \Delta^h, (-1)^Q Q \right).
	\end{equation}
\end{defn}

\begin{remark} From \eqref{equation: heat zeta def},
	 \eqref{equation: determinant}, and \eqref{eq: def of torsion}
	we see that
	\begin{equation} \label{equation: rewrite torsion det Delta}
	\log T(M, E, h) = -\frac{1}{2} \sum_{q=0}^r (-1)^q q \log \det \Delta_q,
	\end{equation}
	generalizing \eqref{eq: torsion in terms of det}.
\end{remark}

\subsection{A closed form for torsion}
\label{subsection: the one-form}

Before defining $\omega_T$, we will introduce
$b$, a one-form on $\M$ with values in $C^\infty(\End E) \subset \mathcal{B}(E)$.
We will view each metric $h \in \M$
as a vector bundle isomorphism $h: E \to E^\ast$,
where $v \in E_x \mapsto h(v, \cdot)_x \in E_x^\ast$.
Let $h^{-1}$ denote the inverse isomorphism $E^\ast \to E$.
We may then view $h$ as a $C^\infty(M, \Hom(E, E^\ast))$-valued $0$-form on $\M$
and $h^{-1}$ as a $C^\infty(M, \Hom(E^\ast, E))$-valued
$0$-form on $\M$.
Let $b$ denote the following ``logarithmic derivative"
of the metric:
\begin{equation*}
b := h^{-1} \delta^{\M} h \in \Omega^1(\M, \mathcal{B}(E)).
\end{equation*}
The following lemmas give some
useful properties of $b$.
We will omit their proofs, which follow from short computations.

\begin{lemma} \label{lemma: derivative of d star and Delta}
	We have the identity
	$	\delta^\M d^\ast = [d^\ast, b]$,
	which immediately implies
	$		\delta^\M \Delta = \{ d, [d^\ast, b] \}$.
\end{lemma}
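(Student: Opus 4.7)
The plan is to derive the first identity by differentiating the defining relation of the adjoint, $\langle d a, c\rangle_h = \langle a, d^{\ast_h} c\rangle_h$, along an arbitrary tangent vector $X \in T_h\M$; the second identity is then immediate from the graded Leibniz rule because $d$ itself does not depend on $h$.

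First I would establish the auxiliary identity
$$X \cdot \langle a, c\rangle_h = \int_M (X\cdot h)(a,c)\,\vol_0 = \langle b(X) a, c\rangle_h,$$
where $b(X) := h^{-1}(X\cdot h) \in C^\infty(M, \End E)$. Here $h$ and its directional derivative $X\cdot h$ are both symmetric as fiberwise bilinear forms on $E$ (sections of $\Sym^2 E^\ast$), and unwinding this symmetry shows that $b(X)$ is self-adjoint with respect to $h$, i.e., $\langle b(X) a, c\rangle_h = \langle a, b(X) c\rangle_h$ for all smooth sections $a, c$.

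With this in hand, I would differentiate both sides of $\langle d a, c\rangle_h = \langle a, d^{\ast_h} c\rangle_h$ along $X$. The Leibniz rule gives
$$\langle b(X)\, d a, c\rangle_h = \langle b(X) a,\, d^{\ast_h} c\rangle_h + \langle a,\, (X\cdot d^{\ast_h}) c\rangle_h.$$
Using the self-adjointness of $b(X)$ to slide it across each inner product, the left side becomes $\langle a, d^{\ast_h} b(X) c\rangle_h$ and the first term on the right becomes $\langle a, b(X) d^{\ast_h} c\rangle_h$. Since $a$ and $c$ are arbitrary smooth sections and smooth sections are dense, this forces $X\cdot d^{\ast_h} = d^{\ast_h} b(X) - b(X) d^{\ast_h} = [d^{\ast_h}, b(X)]$, which is exactly $\delta^\M d^\ast = [d^\ast, b]$.

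For the second identity, since $d$ is a fixed differential operator independent of the metric, $\delta^\M d = 0$. Viewing $d$ and $d^\ast$ as zero-forms on $\M$, the Leibniz rule applied to $\delta^\M \Delta = \delta^\M(dd^\ast + d^\ast d)$ yields $\delta^\M \Delta = d\,(\delta^\M d^\ast) + (\delta^\M d^\ast)\, d = \{d, [d^\ast, b]\}$, as claimed. The computation is essentially routine; the one bookkeeping point to watch is the self-adjointness of $b(X)$, which lets one commute $b$ across the $h$-inner product with no sign correction. No functional-analytic technicalities or regularization are required, since we work at the level of smooth sections and of zero- and one-forms on $\M$ throughout.
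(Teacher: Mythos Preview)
Your proof is correct but takes a genuinely different route from the paper's. The paper first establishes the explicit conjugation formula $d^{\ast_h} = h^{-1} h_0\, d^{\ast_0}\, h_0^{-1} h$ (for an arbitrary fixed basepoint metric $h_0$) by a short inner-product calculation, and then differentiates this formula purely algebraically using $\delta^\M(h^{-1}) = -h^{-1}(\delta^\M h)h^{-1}$; the symmetry of $b$ is never invoked there and is proved separately as a later lemma. Your approach instead differentiates the defining adjoint relation $\langle da,c\rangle_h = \langle a, d^{\ast_h}c\rangle_h$ directly and relies on the self-adjointness of $b(X)$ to slide it across the inner product. Your method avoids the choice of basepoint and is arguably more direct, while the paper's method yields the explicit formula for $d^{\ast_h}$ as a byproduct and keeps this lemma logically independent of the symmetry of $b$ (which matters for the paper's overall flow, since that symmetry is established afterward).
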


\begin{lemma} \label{lemma: b is symmetric}
	The one-form $b$ is symmetric, i.e., $b^\ast = b$.
\end{lemma}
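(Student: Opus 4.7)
The plan is to unpack the definition of symmetry for the one-form $b$ and reduce it to the fact that $\M$ sits inside the space of symmetric $2$-tensors on $E$, so that the derivative $\delta^\M h$ evaluated on any tangent vector is automatically a symmetric bilinear form on fibers of $E$.

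First I would fix a point $h \in \M$ and a tangent vector $X$ at $h$; since $\M$ is an open subset of the Fr\'echet space $C^\infty(M, \Sym^2 E^\ast)$, the object $(\delta^\M h)(X)$ is a smooth section of $\Sym^2 E^\ast$, i.e., at every $x \in M$ it is a symmetric bilinear form on $E_x$. The endomorphism $b(h; X) = h^{-1} (\delta^\M h)(X) \in C^\infty(\End E)$ is the one whose bilinear form against $h$ recovers $(\delta^\M h)(X)$.

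Next, for smooth sections $f, g$ of $E$, I would compute
\begin{align*}
\langle b(h;X) f, g \rangle_h
&= \int_M h\bigl( h^{-1} (\delta^\M h)(X) f, \, g \bigr)_x \, \vol_0(x) \\
&= \int_M (\delta^\M h)(X)(f, g)_x \, \vol_0(x),
\end{align*}
using the tautological identity $h(h^{-1} \phi) = \phi$ for $\phi: E \to E^\ast$. Since $(\delta^\M h)(X)_x$ is a symmetric bilinear form on $E_x$, I would then swap arguments to obtain
\begin{align*}
\langle b(h;X) f, g \rangle_h
&= \int_M (\delta^\M h)(X)(g, f)_x \, \vol_0(x) \\
&= \langle f, b(h;X) g \rangle_h,
\end{align*}
which is exactly the statement that the operator $b(h;X) \in \mathcal{B}(E)$ is symmetric with respect to $h$. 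By the definition of the adjoint of a differential form given in \S\ref{subsection: Differential forms}, this means $b^\ast = b$.

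There is no real obstacle here; the only thing to be careful about is tracking conventions for the identification $h: E \to E^\ast$ and the resulting meaning of $h^{-1}(\delta^\M h)$, so that symmetry of the $\Sym^2 E^\ast$-valued object $\delta^\M h$ translates into the operator-theoretic symmetry needed in the definition of $(\cdot)^\ast$.
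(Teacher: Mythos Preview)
Your proof is correct and follows essentially the same route as the paper: both arguments reduce to the pointwise symmetry of the endomorphism $h^{-1}\dot h$ by using that $\dot h$ (equivalently $(\delta^\M h)(X)$) is itself a symmetric bilinear form on each fiber. The only cosmetic differences are that the paper phrases everything along a curve $u \mapsto h_u$ and keeps explicit track of complex conjugation (writing $h_u(f,g)_x = \overline{h_u(g,f)_x}$) so that the hermitian case is covered verbatim, whereas you invoke the ambient $\Sym^2 E^\ast$ structure, which is the real-bundle convention the paper has already adopted for exposition.
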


\begin{lemma} \label{lemma: derivative of b}
	The derivative of $b$ is $\delta^{\M} b = - b \, b$.
\end{lemma}

Now let $\omega_T$ be the following $\mathbb{C}$-valued
one-form on $\M$:
\begin{align} \label{eq: def of omega T}
\omega_T := \Tr' (-1)^Q e^{-\Delta} b \in \Omega^1(\M, \mathbb{C}).
\end{align}
Note that, pulled back to a curve in $\M$ parametrized by $h(u)$ for $u$ in some interval in $\mathbb{R}$, we have $b = h(u)^{-1} \frac{d h}{d u} \, du$ and
\begin{align*}
\omega_T &= \Tr' (-1)^Q e^{-\Delta^{h(u)}} h(u)^{-1} \frac{d h}{d u} \, du.
\end{align*}

We will now explain the significance of $\omega_T$
to the analytic torsion.
In what follows, we will use the
notation $h^p$ for the restriction of a metric $h \in \M$
to the subbundle $E^p$.
\begin{remark} \label{remark: scale Riemannian metric}
	Suppose  $E = \Lambda^\bullet F$ is the bundle of $F$-valued forms,
	where $F$ is an orthogonal or unitary flat bundle.
	Suppose that $h$ is a metric on $E$ that
	is induced by a Riemannian metric $g^{TM}$ and the canonical metric on $F$.
	For $t>0$, let $g^{TM}_t := \frac{1}{t} g^{TM}$, and let $h_t$ be the induced metric on $E$.
	Then it follows that $h_t^p = t^p h^p$.
\end{remark}
This special case motivates the following more general construction in the presence of a $\mathbb{Z}$-grading.
For some fixed metric $h \in \M$ and for $t > 0$, let $h_t = t^Q h$,
which is the metric on $E$
whose restriction to $E^q$ is $t^{q} h^q$.
This parametrizes a curve $C_h$ in $\M$.
Pulled back to $C_h$, we have 
that $\delta^\M h_t = Q t^{Q-1} h \, dt =  Q h \frac{dt}{t}$
and therefore that the one-form $b = h_t^{-1} \delta^\M h_t $ is
$  b = Q \frac{dt}{t}$.
From Lemma \ref{lemma: derivative of d star and Delta},
we see that $\dstar$ and $\Delta$ are given at time $t$ simply
by scaling by $t$:
$\dstar(t) = t d^{\ast_h}$ and
$ \Delta(t) = t \Delta^h$.
Finally, we have that $\omega_T$ pulled back to $C_h$ is
\begin{equation*}
\omega_T = \Tr' (-1)^Q Q e^{-t \Delta^h} \frac{dt}{t},
\end{equation*}
which, modulo the regularizing factor $t^s$, is precisely the integrand in the zeta function defining the analytic torsion $T$.
Thus we have proven
\begin{lemma} \label{lemma: zeta as integral of omega}
	For $\operatorname{Re} s > \frac{n}{m}$,
	\begin{equation*}
	\zeta(s;  \Delta^h, (-1)^Q Q) =  \frac{1}{\Gamma(s)} \int_{C_h} t^s \omega_T,
	\end{equation*}
	where here $\omega_T$ denotes the one-form of \eqref{eq: def of omega T} pulled back to $C_h$.
\end{lemma}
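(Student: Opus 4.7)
The plan is to unwind the definition of $\omega_T$ along the curve $C_h$ and recognize the resulting integral as the Mellin transform defining $\zeta(s;\Delta^h,(-1)^QQ)$. All of the pointwise pieces have in fact been identified in the paragraph immediately preceding the statement, so the proof amounts to assembling them carefully.

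First I would compute the pullback of the logarithmic derivative $b = h^{-1}\delta^\M h$ to $C_h$. Since $h_t = t^Q h$ restricts on $E^q$ to $t^q h^q$, we have $\frac{d h_t^q}{dt} = q\,t^{q-1}h^q = \frac{q}{t} h_t^q$, which gives $(h_t^q)^{-1}\delta^\M h_t^q = q\,\frac{dt}{t}$, i.e., $b = Q\,\frac{dt}{t}$ on $C_h$. Next I would compute $\Delta$ along $C_h$. Using the formula $\dstarh = h^{-1}h_0\dstaro h_0^{-1}h$ from the proof of Lemma \ref{lemma: derivative of d star and Delta} with the basepoint $h_0 = h$ and metric $h_t$, we get $\dstar(t) = t^{-Q}\dstar^h t^Q$, which, since $\dstar^h$ decreases $Q$ by one, yields $\dstar(t) = t\,\dstar^h$ and hence $\Delta(t) = t\,\Delta^h$.

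Substituting these pullbacks into \eqref{eq: def of omega T} gives
\begin{equation*}
\omega_T\big|_{C_h} = \Tr'(-1)^Q e^{-t\Delta^h}\, Q\,\frac{dt}{t} = \Tr'(-1)^Q Q\, e^{-t\Delta^h}\,\frac{dt}{t},
\end{equation*}
where I have used that $Q$ commutes with $(-1)^Q$ and with $\Delta^h$ (the latter because $\Delta^h$ preserves the grading). Multiplying by $t^s$ and integrating over $t \in (0,\infty)$ yields
\begin{equation*}
\int_{C_h} t^s \omega_T = \int_0^\infty t^s \,\Tr'(-1)^Q Q\, e^{-t\Delta^h}\,\frac{dt}{t},
\end{equation*}
which for $\operatorname{Re} s > \frac{n}{m}$ is convergent by Theorem \ref{theorem: L zeta properties} and equals $\Gamma(s)\,\zeta(s;\Delta^h,(-1)^Q Q)$ by definition \eqref{equation: heat zeta def}. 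Dividing by $\Gamma(s)$ gives the claimed identity.

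There is no real obstacle here; the only subtlety worth flagging is the bookkeeping for where the powers of $t$ come from: on one hand from the rescaling $h_t^q = t^q h^q$ (producing the factor $Q$ in $b$), and on the other hand from the corresponding rescaling of $\Delta$ (producing the $t$ inside the exponential). The simultaneous appearance of these two rescalings is exactly what makes $\omega_T$ pulled back to $C_h$ coincide with the integrand of the heat-kernel representation of the torsion zeta function, and this is what relies on the $\mathbb{Z}$-grading (cf.\ Remark \ref{remark: mathai-wu}).
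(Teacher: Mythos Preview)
Your proposal is correct and follows essentially the same approach as the paper: the paper computes the pullbacks $b = Q\,\frac{dt}{t}$ and $\Delta(t) = t\Delta^h$ in the paragraph immediately preceding the lemma and then concludes that the pullback of $\omega_T$ is exactly the integrand of the torsion zeta function. Your write-up is slightly more explicit in deriving $\dstar(t) = t\,\dstar^h$ via the conjugation formula $\dstar(t) = t^{-Q}\dstar^h t^Q$, but this is the same computation the paper is invoking when it cites Lemma~\ref{lemma: derivative of d star and Delta}.
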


Recalling the definition of analytic torsion $T(h)$ from \eqref{eq: def of torsion}
and using that $\frac{1}{\Gamma(s)} = s + O(s^2)$ as $s \to 0$ and Lemma \ref{lemma: zeta as integral of omega},
we have the heuristic
\begin{equation*}
2 \log T(h) =  \left. \left( \int_{C_h} t^s \omega_T \right) \right|^{\text{AC}}_{s=0},
\end{equation*}
where the superscript $\text{AC}$ indicates that we must analytically continue
the function in parentheses, which is only defined for $\Real s$ large.
(This heuristic is only literally true if the aforementioned analytic continuation is holomorphic at the origin.)
This is the sense in which we mean the following:
\begin{displayquote}
	The analytic torsion $\log T(h)$ may be interpreted as a regularized integral of $\frac{1}{2} \omega_T$ over the curve $C_h$ in the space of metrics $\M$.
\end{displayquote}

\begin{remark} \label{remark: mathai-wu}
	Evidently, this interpretation relies on the $\mathbb{Z}$-grading
	and does not make sense in the more general $\mathbb{Z}_2$-graded context
	of Mathai-Wu \cites{mathai-wu-twisted-2011, mathai-wu}.
	In fact, however, this interpretation does apply to $\mathbb{Z}_2$-graded
	elliptic complexes if one takes a more general approach.
	By considering the space of all inner products
	on the Hilbertable space $L^2(M,E)$,
	including \textit{non-local} inner products that do not arise from a metric on $E$,
	one can define more general versions of
	$b$ and $\omega_T$. 
	In that case, the regularized integral of $\omega_T$ along the curve corresponding to
	scaling the inner product on the subspace $\im d \subset L^2(M,E)$
	produces the Mathai-Wu torsion.
	We have chosen not to pursue this approach here.
\end{remark}

The preceding interpretation and the following theorem
are central to our viewpoint on the metric dependence of $T$.


\begin{theorem} \label{theorem: closed}
	The one-form $\omega_T$ is closed on $\M$, i.e., $\delta^\M \omega_T = 0$.
\end{theorem}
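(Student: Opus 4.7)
The plan is to mirror the argument used for $\omega_\eta$: replace $e^{-\Delta}$ by a power of the resolvent $R_z = (z - \Delta)^{-1}$ via the Cauchy integral formula \eqref{eq: CIF}, so that closedness of $\omega_T$ reduces to closedness of the operator-valued resolvent form $\tilde\omega_T := \Tr'(-1)^Q R_z^N b$ for $N$ large enough that $R_z^N$ is trace-class. Computing $\delta\tilde\omega_T$ by the Leibniz rule, the variation of $R_z^N$ contributes $\sum_{j=1}^N R_z^j (\delta\Delta) R_z^{N-j+1}$ with $\delta\Delta = \{d, [d^*, b]\}$ coming from Lemma~\ref{lemma: derivative of d star and Delta}, while the variation of $b$ contributes $\delta b = -b^2$, an immediate consequence of $\delta^2 h = 0$ and $\delta(h^{-1}) = -h^{-1}(\delta h) h^{-1}$. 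Thus
\[
\delta\tilde\omega_T = \sum_{j=1}^N \Tr'(-1)^Q R_z^j (\delta\Delta) R_z^{N-j+1} b \;-\; \Tr'(-1)^Q R_z^N b^2.
\]

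The core of the argument uses two features: $d$ and $d^*$ commute with $R_z$ (since they commute with $\Delta$) and anticommute with $(-1)^Q$, which together with ordinary cyclicity of the trace yield the identity $\Tr'(-1)^Q d X = -\Tr'(-1)^Q X d$ whenever $X$ has odd $\mathbb{Z}_2$-grading. The first move is to cycle $d$ out of the anticommutator $\{d, [d^*, b]\}$ in each summand; as in the proof for $\omega_\eta$, the two pieces combine with opposite supertrace signs to give $\sum_j \Tr'(-1)^Q R_z^j [d^*, b] R_z^{N-j+1} [d, b]$, eliminating $d$ at the cost of introducing a second commutator.

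The second move is to expand $[d^*, b]$ and cycle $d^*$ similarly. The combination $\{d^*, [d, b]\}$ that appears equals $[\Delta, b] - \delta\Delta$ by a direct computation (super-Jacobi); using the resolvent identity $\Delta R_z = z R_z - I$ to absorb the $[\Delta, b]$ part, one finds that the sum telescopes in $j$, leaving a boundary contribution $2\,\Tr'(-1)^Q R_z^N b^2$ together with a residual $\sum_j \Tr'(-1)^Q R_z^j b R_z^{N-j+1}\delta\Delta$. A separate and shorter calculation, obtained by cycling the 1-form $b$ in the \emph{original} sum, gives $\sum_j \Tr'(-1)^Q R_z^j b R_z^{N-j+1}\delta\Delta = -\sum_j \Tr'(-1)^Q R_z^j (\delta\Delta) R_z^{N-j+1} b$. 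Solving the resulting linear relation for the first term of the displayed expression pins it down as $\Tr'(-1)^Q R_z^N b^2$, which cancels the second term exactly, giving $\delta\tilde\omega_T = 0$ and hence, by the Cauchy integral formula, $\delta\omega_T = 0$.

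The main obstacle will be sign bookkeeping, which has two independent sources: the $\mathbb{Z}_2$-grading signs from passing operators through $(-1)^Q$, and the form-degree signs from the cyclicity $\Tr(\alpha\beta) = (-1)^{k_\alpha k_\beta}\Tr(\beta\alpha)$ for operator-valued forms. Structurally the calculation parallels the one for $\omega_\eta$, but it is the presence of $(-1)^Q$ that forces exactly the right terms to cancel; this is the concrete sense in which the proof relies on the full $\mathbb{Z}$-grading and does not generalize directly to the $\mathbb{Z}_2$-graded setting of Remark~\ref{remark: mathai-wu}.
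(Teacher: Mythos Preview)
Your argument is correct and genuinely different from the paper's. The paper never cycles $d$ or $d^*$ through the supertrace; instead it exploits the Hilbert-space structure, writing $\tilde\omega_T = \tfrac12\bigl(\Tr(-1)^Q R_z^N b + \Trconj(-1)^Q R_{\bar z}^N b\bigr)$ via the symmetry $b^\ast=b$, and then matches each term in $\delta^\M$ of the first half against a term in $\delta^\M$ of the second half using the adjoint relation and a swapping lemma (Lemma~\ref{move delta}). Your route bypasses adjoints and $b^\ast=b$ entirely, using only that $d,d^\ast$ commute with $R_z$ and anticommute with $(-1)^Q$; the resolvent identity $\Delta R_z = zR_z - I$ then forces the telescoping you describe, and the final cyclicity step pins down $S=\Tr(-1)^Q R_z^N b^2$. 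This is closer in spirit to the $\omega_\eta$ and index computations of \S\ref{subsection: index}--\S\ref{section: eta} and is arguably the more elementary proof.

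One correction: you should define $\tilde\omega_T$ with the \emph{unprimed} trace, $\tilde\omega_T=\Tr(-1)^Q R_z^N b$, as the paper does. The projection $\Pi_{\ker\Delta}$ depends on the metric, so $\delta^\M\Tr' A$ picks up an extra $-\Tr A\,\delta^\M\Pi_{\ker\Delta}$ term you have not accounted for. The passage from $\tilde\omega_T$ to $\omega_T=\Tr'(-1)^Q e^{-\Delta}b$ is handled not by priming the trace in $\tilde\omega_T$ but by choosing the contour $C$ in the Cauchy integral to surround $[\lambda_1,\infty)$ and exclude $0$; see \eqref{equation: delta omega}. With $\Tr$ in place of $\Tr'$, every step of your computation goes through unchanged. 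A smaller point: your closing remark overstates the role of the $\mathbb Z$-grading in \emph{this} proof. The closedness argument uses only $(-1)^Q$, which makes sense in the $\mathbb Z_2$-graded setting; what genuinely requires the full $\mathbb Z$-grading is the interpretation of torsion as $\int_{C_h}\omega_T$ via the curve $t\mapsto t^Q h$ (Remark~\ref{remark: mathai-wu}), not the closedness of $\omega_T$ itself.
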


We will indicate the idea of the proof.
Fix an integer $N$ greater than $ \frac{n}{m}$
to  ensure that $R_z^N$ is trace-class,
where $R_z = (z-\Delta)^{-1}$ is the resolvent.
Define $\tilde{\omega}_T \in \Omega^1(\M, \mathbb{C})$
(whose dependence on $z$ we suppress) by
\begin{equation} \label{eq: def of omega T tilde}
\tilde{\omega}_T :=  \Tr (-1)^Q R_z^N b.
\end{equation}
The Cauchy integral formula
relates $\omega_T = \Tr' (-1)^Q e^{-\Delta} b$ and $\tilde{\omega}_T$,
and to prove that $\omega_T$ is closed, it is
sufficient to prove:

\begin{lemma}  \label{lemma: omega T tilde is closed}
	The one-form $\tilde{\omega}_T =  \Tr (-1)^Q R_z^N b $ is closed on $\M$.
\end{lemma}

To prove this lemma,
one may use
the familiar formula $\delta^\M R_z  =  R_z \,\left( \delta^{\M} \Delta \right) \, R_z$
for the derivative of the resolvent;
Lemmas \ref{lemma: derivative of d star and Delta}, \ref{lemma: b is symmetric}, and \ref{lemma: derivative of b}; and properties of the trace and adjoint
from Lemma \ref{lemma: adjoint}.
We omit the details here since Theorem \ref{theorem: multi closed} is a more general result (see Remark \ref{remark: m=1})
whose proof does not depend on this result.

\begin{remark} \label{remark: bismut-zhang}
	Our one-form $\omega_T$ is a generalization
	of the one-form  $\alpha$ introduced by Bismut-Zhang
	in the de Rham setting \cite{bismut-zhang}.
	To explain this briefly, let $g^{TM}$ be a Riemannian metric on $M$
	and let	$F$ be a flat bundle with metric $g^F$.
	Let $f: M \to \mathbb{R}$ be a Morse function.
	For $t \in (0, \infty)$ and $T \in \mathbb{R}$,
	consider the metric $h_{t,T}$ on $\Lambda^\bullet F$
	induced by the metrics $\frac{1}{t} g^{TM}$ (as in Remark \ref{remark: scale Riemannian metric}) and $e^{-2Tf} g^F$.
The pullback of $\omega_T$ to the surface in $\M$
parametrized by $(t,T) \mapsto h_{t,T}$ is
	\begin{equation*}
	 \Tr (-1)^Q Q e^{-t \Delta^T} \, \frac{dt}{t} ~ - ~ 2 \Tr (-1)^Q f e^{-t \Delta^T} \, dT,
	\end{equation*}
	which is Bismut-Zhang's $\alpha$.
	(Here, $\Delta^T$ denotes a conjugate of the 
	Witten Laplacian \cite{witten}.)
	Bismut-Zhang's proof of the generalized Cheeger-M\"{u}ller theorem
	uses that $\alpha $ is closed on  $\Sigma$, although
	they do not make our more general observation that $\alpha$
	is the pullback to $\Sigma$ of a closed form defined on all of $\M$.
\end{remark}

\subsection{Variation formula for torsion}
\label{subsection: proof of metric anomaly}

The following theorem
due to Schwarz \cite{schwarz}
is a generalization of Ray-Singer's fundamental result
for de Rham torsion \cite{ray-singer-71}.
This is a special case of Mathai-Wu's result for $\mathbb{Z}_2$-graded torsion,
Theorem 4.1 of \cite{mathai-wu}.
We will present a proof
using a Stokes' theorem argument
involving the closed one-form $\omega_T$.

\begin{theorem} \label{theorem: variation of T}
	Let $h(u)$, $u \in [0,1]$, be a smooth one-parameter family of metrics in $\M$.
	Let $T(h(u)) = T(M, E, h(u))$ be the associated analytic torsion.
	Then the derivative of $\log T(h(u))$
	is the following constant
	term in an asymptotic expansion:
	\begin{equation} \label{equation: derivative of torsion 2}
	\frac{d}{du} \log T(h(u)) = \left[ \Tr' (-1)^Q h(u)^{-1} \frac{d h}{d u} e^{-t \Delta^{h(u)}} \right]_{t^0} .
	\end{equation}
\end{theorem}

\begin{proof}
It suffices to compute the difference in analytic torsions associated to $h(0)$ and $h(1)$
	(thereby using the usual Stokes' theorem rather than an infinitesimal version).
	
	\label{proof of metric anomaly}
	For $A > \e > 0$, let $\Sigma = \Sigma_{A, \e}$ denote the surface  in $\M$ parametrized by $(u, t) \in [0,1] \times [\e, A] \mapsto h = h(u)_t := t^Q h(u)$.
	The boundary of $\Sigma$ consists of four curves,
	described respectively by $u = 0$, $u=1$, $t = \e$, and $t = A$.
	$\omega_T$ pulled back to $\partial \Sigma$, in the coordinates $(u, t)$, is
	\begin{align*}
	\omega_T =&  \Tr' (-1)^Q  Q e^{-t \Delta(u)} \frac{dt}{t} + \Tr' (-1)^Q e^{-t \Delta(u)} h^{-1}\frac{\partial h}{\partial u} du.
	\end{align*}
	Since $\omega_T$ is closed, $\delta^\M (t^{s} \omega_T)$ pulled back to $\Sigma$ is
	\begin{align*}
	\delta^\M (t^{s} \omega_T)
	&=  s t^{s - 1}   \Tr' (-1)^Q e^{- t \Delta(u)} h^{-1}\frac{\partial  h}{\partial u}  \, dt \, du .
	\end{align*}
	Let $\zeta_u(s) := \zeta(s; \Delta^{h(u)}, (-1)^Q Q)$.
	Let $\zeta_{u, A, \e}(s)$ be defined for $\Real s > \frac{n}{m}$ by
	\begin{equation*}
	\Gamma(s) \zeta_{u, A, \e}(s) :=  \int_{C_{h(u), A, \e}} t^s \omega_T .
	\end{equation*}
	Stokes' theorem gives, for $\Real s > \frac{n}{m}$,
	\begin{align*}
	\Gamma(s) \left[ \zeta_{0, A, \e}(s) - \zeta_{1, A, \e}(s) \right] =& \int_0^1 \int_\e^A  s t^{s - 1}   \Tr' (-1)^Q e^{- t \Delta(u)} h^{-1}\frac{\partial  h}{\partial u}  \, dt \, du \\
	&+ \int_0^1 t^s  \Tr' (-1)^Q e^{-\epsilon \Delta(u)}  h^{-1}\frac{\partial h}{\partial u} du \\
	&- \int_0^1 t^s  \Tr' (-1)^Q e^{-A \Delta(u)}  h^{-1}\frac{\partial h}{\partial u} du. 
	\end{align*}
	In the limits $A \to \infty$ and $\e \to 0$, the latter two terms tend to zero for $\Real s$ large
	by the admissibility conditions \ref{A1} and \ref{A2}.
	Thus we obtain, for $\Real s > \frac{n}{m}$,
	\begin{align*}
	\zeta_{0}(s) - \zeta_{1}(s) &= \frac{1}{\Gamma(s)}  \int_0^1 \int_0^\infty   s t^{s - 1}   \Tr' (-1)^Q e^{- t \Delta(u)} h^{-1}\frac{\partial  h}{\partial u}  \, dt \, du \\
	&=  \int_0^1  s  \zeta\left(s; \Delta(u),  (-1)^Q h^{-1}\frac{\partial  h}{\partial u} \right)  \, du.
	\end{align*}
	We have shown that this equality holds for $\Real s$ large, but both sides possess unique meromorphic continuations,
	so in fact the equality holds for all $s$.
	Differentiating and evaluating at $s=0$, we obtain
	\begin{align*}
	\log T(h(0)) - \log T(h(1)) &=  \int_0^1   \left. \frac{d}{ds} \right|_{s=0} \left( s  \zeta\left(s; \Delta(u),  (-1)^Q h^{-1}\frac{\partial  h}{\partial u} \right) \right)  \, du \\
&= \int_0^1 	 \zeta\left(0; \Delta(u),  (-1)^Q h^{-1}\frac{\partial  h}{\partial u} \right) \, du \\
&= \int_0^1 \left[\Tr' (-1)^Q e^{- t \Delta(u)} h^{-1}\frac{\partial  h}{\partial u} \right]_{t^0} \, du,
		\end{align*} 
		where we have used  Theorem \ref{theorem: L zeta properties}.
	This proves the theorem.
\end{proof}

From Theorem \ref{theorem: variation of T}
and Corollary \ref{coro: a'_0 vanishes}, one immediately obtains:
\begin{coro} \label{coro: n odd, acyclic}
	If the dimension $n$ is odd and $E$ is acyclic,
	then the analytic torsion $T(h)$ is independent of the metric $h \in \M$.
\end{coro}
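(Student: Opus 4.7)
The plan is to derive this corollary as an essentially immediate consequence of Theorem \ref{theorem: metric anomaly} combined with Corollary \ref{coro: a'_0 vanishes}. The key observation is that under the hypotheses of the corollary, the right-hand side of \eqref{equation: derivative of torsion} vanishes identically, so $\log T(h(u))$ is constant along any smooth path of metrics, and $\M$ being convex (hence path-connected) in $C^\infty(M, \Sym^2 E^\ast)$ finishes the argument.

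First I would fix a smooth one-parameter family $h(u)$ and apply Theorem \ref{theorem: metric anomaly} to write
\begin{equation*}
\frac{d}{du} \log T(h(u)) = \left[\Tr' (-1)^Q h(u)^{-1} \frac{dh}{du} e^{-t \Delta^{h(u)}}\right]_{t^0}.
\end{equation*}
The operator $(-1)^Q h^{-1} \frac{dh}{du}$ is a bundle endomorphism, i.e., a differential operator of order $q=0$. Acyclicity gives $\ker \Delta^{h(u)} = 0$, so the projection term $-\tr Q \Pi_{\ker \Delta}$ in \eqref{equation: heat trace-prime asymptotics} is zero, and we may replace $\Tr'$ by $\Tr$ in the above expression. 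Hence the derivative equals $[\Tr Q e^{-t\Delta^{h(u)}}]_{t^0}$ with $Q = (-1)^Q h^{-1} \frac{dh}{du}$, which by \eqref{equation: value of zeta at zero} is precisely $\zeta(0; \Delta^{h(u)}, Q)$.

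Now I would invoke Corollary \ref{coro: a'_0 vanishes}: since $n$ is odd and $\ker \Delta^{h(u)} = 0$, we have $\zeta(0; \Delta^{h(u)}, Q) = 0$ for any auxiliary operator $Q$. Therefore $\frac{d}{du} \log T(h(u)) = 0$ identically in $u$, so $\log T$ is constant on every smooth curve in $\M$. Finally, any two metrics $h_0, h_1 \in \M$ can be joined by the straight-line path $h(u) = (1-u) h_0 + u h_1$, which stays in $\M$ because the condition of positive definiteness and of orthogonal decomposition with respect to the $\mathbb{Z}$-grading is preserved under convex combinations; this yields $T(h_0) = T(h_1)$.

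There is essentially no obstacle here, as all the real work was done in proving Theorem \ref{theorem: metric anomaly} and Corollary \ref{coro: a'_0 vanishes}; the only minor point to verify carefully is the parity bookkeeping showing $Q$ has order zero so that the relevant asymptotic coefficient is controlled by the odd-dimensional vanishing, together with the convexity of $\M$ ensuring global (not just local) metric independence.
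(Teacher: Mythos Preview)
Your proof is correct and follows essentially the same approach as the paper's: invoke Theorem \ref{theorem: metric anomaly} and Corollary \ref{coro: a'_0 vanishes} to see that $T(h)$ is locally constant on $\M$, then use the convexity (hence path-connectedness) of $\M$ to upgrade to global constancy. The paper's proof is simply a terser two-sentence version of what you wrote.
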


\begin{remark}
	If $n$ is even and/or $E$ is not acyclic, then $T(h)$ may depend on the metric $h$, but
	Theorem \ref{theorem: variation of T}
	can be used to show the metric invariance
	of the Ray-Singer metric on the determinant line of the cohomology $H^\bullet(M, E)$
	(in the odd-dimensional case)
	or of a relative analytic torsion
	(in the even-dimensional case).
	See, e.g., \cite{mathai-wu} for a review of the details.
\end{remark}


\section{Definitions of multi-zeta functions and multi-torsion}
\label{section: multi-torsion}

\subsection{Multi-zeta functions}
\label{subsection: Multi-zeta functions}
Defining multi-torsion will require
developing a theory of multi-zeta functions.
As motivation,
let us consider a product manifold.
For $j=1,2$, let $M_j$ be a  compact manifold of dimension $n_j$;
denote by $\pi_j$ the projection $M_1 \times M_2 \to M_j$.
For $j=1,2$, let $E_j \to M_j$ be a vector bundle
and let $L_j$
be a strictly positive symmetric elliptic operator on sections of $E_j$.
Consider the product vector bundle over the product manifold:
$E := \pi_1^\ast E_1 \otimes \pi_2^\ast E_2 \to M_1 \times M_2$.
$L_1$ and $L_2$ induce commuting sub-elliptic operators,
which we will also denote by $L_1$ and $L_2$, on sections of $E$.
Define $L(t_1, t_2):= t_1 L_1  +  t_2 L_2$, which is an elliptic operator depending on two positive parameters $t_1$ and $t_2$.
For $j=1,2$,
let $h_j(t_j) := \Tr e^{-t_j L_j}$, and let $\zeta_j(s_j)$
be the associated zeta function.
Let $h(t_1, t_2):=   \Tr e^{-L(t_1, t_2)}$, which decomposes as
a product:
$h(t_1, t_2) = h_1(t_1) h_2(t_2)$.
Define the ``multi-zeta function'' $\zeta(s_1, s_2)$
by $\zeta(s_1, s_2) := \zeta_1(s_1) \zeta_2(s_2)$, which
by Fubini's theorem
may be written as follows,
for $\Real s_1, \Real s_2$ large:
\begin{equation} \label{eq: multi-zeta in product case}
\zeta(s_1, s_2) = \frac{1}{\Gamma(s_1) \Gamma(s_2)}  \int_0^\infty \int_0^\infty t_1^{s_1 - 1} t_2^{s_2 - 1} \,  h(t_1, t_2) \, dt_1 dt_2.
\end{equation}
Note also that we have the following relationship
between derivatives at the origin:
\begin{align} \label{eq: derivative of product}
\left. \frac{\partial^2 \zeta}{\partial s_1 \partial s_2} \right|_{(s_1, s_2)=(0,0)}
&= \left. \frac{d \zeta_1}{ds_1} \right|_{s_1=0} \left. \frac{d \zeta_2}{ds_2} \right|_{s_2=0} .
\end{align}

We may also consider more general functions $h(t_1, t_2)$
and define an associated multi-zeta function
as in \eqref{eq: multi-zeta in product case}.
In the product case, ``multiplication" of the respective properties of the admissible functions
$h_1(t_1)$ and $h_2(t_2)$ (recall Definition \ref{defn: admissible})
implies that their product
has the following four ``multi-asymptotic" properties:
\begin{enumerate}[label={(MA\arabic*)}]
	
	\item {\em (Both $t_1$ and $t_2$ small.)} \label{first}
	For every real number $K$,
	we may write
	\begin{align*}
	h(t_1, t_2) 
	= & \sum_{j_1 = 0}^{J^1_K} \sum_{j_2 = 0}^{J^2_K} \, (t_1)^{p^1_{j_1}} \, (t_2)^{p^2_{j_2}}  \, a_{p^1_{j_1}, p^2_{j_2} }  \\
	+ &\sum_{i_1 = 0}^{I^1_K} \, (t_1)^{p^1_{1_1}} \, b^1_{p^1_{i_1}}(t_2)  + \sum_{i_2 = 0}^{I^2_K} \, (t_2)^{p^2_{i_2}} \, b^2_{p^2_{i_2}}(t_1)  
	+ r_K(t_1, t_2),
	\end{align*}
	where for $0 < t_1, t_2 < 1$, $|r_K(t_1, t_2)| \leq C_K (t_1)^K (t_2)^K$, $| b^1_{p^1_{i_1}}(t_2) | \leq C_{K,i_1} (t_2)^K$,\
	and $| b^2_{p^2_{i_2}}(t_1) | \leq C_{K,i_2} (t_1)^K$.

	\item {\em ($t_1$ small, $t_2$ large.)} \label{second}
	For every real number $K$ we may write  
	\begin{equation*} 
	h(t_1, t_2)
	=  \sum_{i_1 = 0}^{I^1_K}  \, (t_1)^{ p^1_{i_1}} \, c^1_{p^1_{i_1}}(t_2) +  s^1_K(t_1, t_2) 
	\end{equation*}
	where for $0< t_1 < 1$ and $t_2 \geq 1$,
	$| c^1_{p^1_{i_1}}(t_2) | \leq  C_{K, i_1}e^{-\epsilon_{K, i_1} t_2 } $ and $|s^1_K(t_1, t_2)| \leq C_K (t_1)^K e^{-\epsilon_K t_2} $.

	\item {\em ($t_1$ small, $t_2$ large.)} \label{third}
	For every real number $K$ we may write  
	\begin{equation*} 
	h(t_1, t_2)
	=  \sum_{i_2 = 0}^{I^2_K}  \, (t_2)^{ p^2_{i_2}} \, c^2_{p^2_{i_2}}(t_1) +  s^2_K(t_1, t_2) 
	\end{equation*}
	where for $0< t_2 < 1$ and $t_1 \geq 1$,
	$| c^2_{p^2_{i_2}}(t_1) | \leq C_{K, i_2}e^{-\epsilon_{K, i_2} t_1 }$ and $|s^2_K(t_1, t_2)| \leq C_K (t_2)^K e^{-\epsilon_K t_1} $.

	\item {\em (Both $t_1$ and $t_2$ large.)} If $t_1, t_2 \geq 1$, \label{last}
	\begin{equation*}
	|h(t_1, t_2)| \leq C e^{-\epsilon_1 t_1} e^{-\epsilon_2 t_2}.
	\end{equation*}

\end{enumerate}

\begin{defn} \label{defn: multi-admissible}
	We will say that a smooth function $(0, \infty) \times (0, \infty) \to \mathbb{C}$
	is multi-admissible when it satisfies conditions \ref{first}-\ref{last} above.
	We then define the multi-zeta function associated to 
	a multi-admissible function
	$h(t_1, t_2)$ by the following, for 
	$\Real s_1$ and $\Real s_2$ large:
	\begin{equation} \label{def of multi-zeta}
	\zeta(s_1, s_2) :=  \frac{1}{\Gamma(s_1) \Gamma(s_2)} \int_0^\infty \int_0^\infty 
	t_1^{s_1} t_2^{s_2} \, h(t_1, t_2) \, \frac{dt_1}{t_1} \frac{dt_2}{t_2}.
	\end{equation}
\end{defn}

\begin{remark}
	It is clear that the product of an admissible
	function of $t_1$ and an admissible function of $t_2$,
	or more generally a finite sum of such products,
	is multi-admissible.
\end{remark}

Lemma \ref{lemma: zeta properties} generalizes to:

\begin{lemma} \label{lemma: multi-zeta properties}
	Suppose $h(t_1, t_2)$ is multi-admissible. Then the 
	integral defining $\zeta(s_1, s_2)$ in \eqref{def of multi-zeta} converges for $\Real s_1 > - p^1_0$ and $\Real s_2 > - p^2_0$
	and defines a holomorphic function there.
	This function admits a unique meromorphic extension to all of $\mathbb{C}^2$;
	the extension, which we also denote by $\zeta(s_1, s_2)$,
	is holomorphic at the origin, where its value is
	$  \zeta(0,0)  = a_{0,0} $.
	Furthermore, we have the vanishing results:
	\begin{enumerate}
		\item If no $(t_1)^0$ terms appear in \ref{first} or in \ref{second},
		then $ \frac{\partial \zeta}{\partial s_2}(0,0) = 0$.
		\item If no $(t_2)^0$ terms appear in \ref{first} or in \ref{third},
		then $ \frac{\partial \zeta}{\partial s_1}(0,0) = 0$.
	\end{enumerate}
\end{lemma}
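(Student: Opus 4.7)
The plan is to imitate the one-variable argument of Lemma \ref{lemma: zeta properties} and Corollary \ref{coro: zeta properties}, but with a two-variable bookkeeping that tracks separately the pole structure in $s_1$ and in $s_2$. First I would split the domain of integration in \eqref{def of multi-zeta} into the four rectangles $(0,1]^2$, $(0,1]\times[1,\infty)$, $[1,\infty)\times(0,1]$, and $[1,\infty)^2$, writing the corresponding contributions to $I(s_1,s_2) := \Gamma(s_1)\Gamma(s_2)\,\zeta(s_1,s_2)$ as $I_{ss}$, $I_{sl}$, $I_{ls}$, $I_{ll}$. The large-large piece $I_{ll}$ is entire in $(s_1,s_2)$ by the exponential decay in \ref{last}.

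For $I_{sl}$ I would substitute the expansion \ref{second} and integrate term-by-term; the remainder contributes a function holomorphic on $\{\Real s_1 > -K\}\times\mathbb{C}$ (the $s_2$-integral converging for all $s_2\in\mathbb{C}$ by the exponential bound on $s^1_K$), while the explicit terms produce
\begin{equation*}
\sum_{i_1}\, \frac{1}{s_1+p^1_{i_1}}\, \int_1^\infty t_2^{s_2-1}\, c^1_{p^1_{i_1}}(t_2)\, dt_2,
\end{equation*}
where each $s_2$-integral is entire by the exponential decay of $c^1_{p^1_{i_1}}$. Letting $K\to\infty$ yields the meromorphic extension of $I_{sl}$ to $\mathbb{C}^2$, with polar locus inside $\bigcup_{i_1}\{s_1=-p^1_{i_1}\}$; the analysis of $I_{ls}$ is symmetric. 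For $I_{ss}$, the full expansion \ref{first} contributes, after integration, (i) ``double pole'' terms $a_{p^1_{j_1},p^2_{j_2}}/\bigl((s_1+p^1_{j_1})(s_2+p^2_{j_2})\bigr)$ from the principal double sum; (ii) simple poles in $s_1$ alone with coefficients $\int_0^1 t_2^{s_2-1} b^1_{p^1_{i_1}}(t_2)\, dt_2$, which are entire in $s_2$ because $|b^1_{p^1_{i_1}}(t_2)|\leq C_{K,i_1}(t_2)^K$ for every $K$; (iii) the symmetric simple poles in $s_2$ alone; and (iv) a remainder holomorphic on $\{\Real s_1 > -K\}\times\{\Real s_2 > -K\}$.

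Assembling the four pieces produces the meromorphic extension of $I$, and hence of $\zeta$, to all of $\mathbb{C}^2$. For the value at the origin, I would expand $1/(\Gamma(s_1)\Gamma(s_2)) = s_1 s_2 + O(s_1^2 s_2 + s_1 s_2^2)$ and observe that the only piece of $I$ producing a double pole of the form $1/(s_1 s_2)$ is the $(j_1,j_2)=(0,0)$ term of the principal double sum, contributing $a_{0,0}/(s_1 s_2)$; every other potential singularity at $s_1=0$ or $s_2=0$ is a simple pole in only one of the two variables and is killed by the factor $s_1 s_2$ in the limit $(s_1,s_2)\to(0,0)$. This gives $\zeta(0,0)=a_{0,0}$.

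For the final vanishing claims, the hypothesis that no $(t_1)^0$ terms appear in \ref{first} or \ref{second} removes every contribution to a pole of $I$ at $s_1=0$: it forces $a_{0,p^2_{j_2}}=0$ and $b^1_0\equiv 0$ in \ref{first}, and forces $c^1_0\equiv 0$ in \ref{second}. Consequently $I(s_1,s_2)$ is holomorphic in $s_1$ at $s_1=0$ in a neighborhood of the origin, and the zero of $1/\Gamma(s_1)$ at $s_1=0$ then forces $\zeta(s_1,s_2)=s_1\cdot\widetilde{\zeta}(s_1,s_2)$ for some $\widetilde{\zeta}$ holomorphic at the origin; in particular $\zeta(0,s_2)\equiv 0$ near $s_2=0$, so $(\partial\zeta/\partial s_2)(0,0)=0$. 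The second vanishing statement is symmetric. The main technical burden is precisely this bookkeeping—tracking which term in which of the four regions contributes which kind of pole—rather than any new analytic difficulty beyond the one-variable theory.
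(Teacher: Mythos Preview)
Your proposal is correct and follows essentially the same approach as the paper: split the integral over the four rectangles, use the respective conditions \ref{first}--\ref{last} on each to isolate the double poles, the simple poles in each variable, and the holomorphic remainders, and then read off the behavior at the origin from the expansion $1/(\Gamma(s_1)\Gamma(s_2))=s_1 s_2+O(|(s_1,s_2)|^3)$. Your phrasing of the vanishing argument (showing $\zeta$ vanishes identically on $\{s_1=0\}$) is a slight variant of the paper's (which computes $\partial\zeta/\partial s_2(0,0)=\tilde b^1_0(0)+\tilde c^1_0(0)$ directly), but the content is the same.
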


\begin{proof}
	Let $f(s_1, s_2)$ be the integral in \eqref{def of multi-zeta}:
	\begin{equation*}
	f(s_1, s_2) := \int_0^\infty \int_0^\infty 
	t_1^{s_1} t_2^{s_2} \, h(t_1, t_2) \, \frac{dt_1}{t_1} \frac{dt_2}{t_2},
	\end{equation*}
	so that
	\begin{equation} \label{zeta-gamma-f}
	\zeta(s_1, s_2) = \frac{1}{\Gamma(s_1) \Gamma(s_2)} f(s_1, s_2).
	\end{equation}
	The gamma function $\Gamma$ is meromorphic and nonvanishing, so
	$\frac{1}{\Gamma(s_1) \Gamma(s_2)}$ is holomorphic everywhere.
	Since $\Gamma(z) = \frac{1}{z} + O(1)$ as $z \to 0$,
	\begin{equation} \label{gamma-at-origin}
	\frac{1}{\Gamma(s_1) \Gamma(s_2)} = s_1 s_2 + O(|(s_1, s_2)|^3) \text{ as } (s_1, s_2) \to (0,0).
	\end{equation}
	Thus it suffices to show that $f(s_1, s_2)$ admits a meromorphic extension
	and to study its behavior near $(0,0)$.
	
	We may write the domain $	(0,\infty) \times (0, \infty) $
	of the integral in \eqref{def of multi-zeta}
	as the union of the four sets
	$(0,1] \times (0,1]$, $(0,1] \times [1, \infty)$, 
	$ [1, \infty) \times (0,1]$, and $ [1, \infty) \times  [1, \infty)$.
	The respective conditions \ref{first}-\ref{last} ensure that the integral
	over each of these four sets converges absolutely if $\Real s_j > \frac{n_j}{2}$ for  $j=1,2$.
	Computing the integrals give that for any real number $K$ we have
	\begin{align*}
	f(s_1, s_2)
	=& \sum_{j_1 = 0}^{J^1_K} \sum_{j_2 = 0}^{J^2_K} \, \frac{1}{(s_1 - p^1_{j_1})(s_2 - p^2_{j_2})}  \, a_{p^1_{j_1}, p^2_{j_2} }  \\
	&+ \sum_{i_1 = 0}^{I^1_K} \, \frac{1}{s_1 - p^1_{i_1}} \, \tilde{b}^1_{p^1_{i_1}}(s_2) ~  + ~ \sum_{i_2 = 0}^{I^2_K} \, \frac{1}{s_2 - p^2_{i_2}} \, \tilde{b}^2_{p^2_{i_2}}(s_1)   \\
	&+ \tilde{r}_K(s_1, s_2) \\
	&+  \sum_{i_1 = 0}^{I^1_K}  \, \frac{1}{s_1 - p^1_{i_1}} \, \tilde{c}^1_{p^1_{i_1}}(s_2) ~ +~   \tilde{s}^1_K(s_1, s_2)  \\
	&+  \sum_{i_2 = 0}^{I^2_K}  \, \frac{1}{s_2 -  p^2_{i_2}} \, \tilde{c}^2_{p^2_{i_2}}(s_2)~ +~  \tilde{s}^2_K(s_1, s_2)  \\
	&+ \tilde{h}(s_1, s_2),
	\end{align*}
	where the new functions of $s_1, s_2$ (marked by tildes), each of which is holomorphic for $\Real s_1, \Real s_2 > -K$,
	are defined in terms of the functions in \ref{first}-\ref{last} by
	\begin{align*}
	\tilde{b}^1_{p^1_{i_1}}(s_2) : =&  \int_0^1 (t_2)^{s_2} \, b_{p^1_{i_1}}(t_2) \, \frac{dt_2}{t_2}; \\
	\tilde{b}^2_{p^2_{i_2}}(s_1) : =&  \int_0^1 (t_1)^{s_1} \, b_{p^2_{i_2}}(t_1) \, \frac{dt_1}{t_1}; \\
	\tilde{r}_K(s_1, s_2) :=& \int_0^1 \int_0^1 (t_1)^{s_1} (t_2)^{s_2} \, r_K(t_1, t_2) \, \frac{dt_1}{t_1} \frac{dt_2}{t_2};  \\
	\tilde{c}^1_{p^1_{i_1}}(s_2) :=& \int_1^\infty t_2^{s_2} \,c_{p^1_{i_1}}(t_2) \, \frac{dt_2}{t_2}; \\
	\tilde{s}^1_K(s_1, s_2) :=& \int_1^\infty \int_0^1  (t_1)^{s_1} (t_2)^{s_2} \, s^1_K(t_1, t_2) \, \frac{dt_1}{t_1} \frac{dt_2}{t_2}; \\
	\tilde{c}^2_{p^2_{i_2}}(s_1) :=& \int_1^\infty t_1^{s_1} \,c_{p^2_{i_2}}(t_1) \, \frac{dt_1}{t_1}; \\
	\tilde{s}^2_K(s_1, s_2) :=&  \int_0^1 \int_1^\infty  (t_1)^{s_1} (t_2)^{s_2} \, s^2_K(t_1, t_2) \, \frac{dt_1}{t_1} \frac{dt_2}{t_2}; \\
	\tilde{h}(s_1, s_2) :=& \int_0^1 \int_1^\infty  (t_1)^{s_1} (t_2)^{s_2} \, h(t_1, t_2) \, \frac{dt_1}{t_1} \frac{dt_2}{t_2}.
	\end{align*}
	This uniquely defines the meromorphic extension of $f$ to the set where $\Real s_1, \Real s_2 > -K$.
	Since $K$ is arbitrary, this proves that $f$ has a unique meromorphic extension to all of $\mathbb{C}^2$.
	We see that as $(s_1, s_2) \to (0,0)$,
	\begin{equation} \label{f(0,0)}
	f(s_1, s_2) = a_{0,0} \frac{1}{s_1 s_2} + \left( \tilde{b}^1_{0}(0) + \tilde{c}^1_{0}(0) \right) \frac{1}{s_1} + 
	\left( \tilde{b}^2_{0}(0) + \tilde{c}^2_{0}(0) \right)\frac{1}{s_2} + O\left( 1 \right).
	\end{equation} 
	From \eqref{zeta-gamma-f}, \eqref{gamma-at-origin}, 
	and \eqref{f(0,0)}, we see that
	\begin{align*}
	\zeta(0,0) &= a_{0,0};
	\frac{\partial \zeta}{\partial s_2}(0,0) = \tilde{b}^1_{0}(0) + \tilde{c}^1_{0}(0); 
	\frac{\partial \zeta}{\partial s_1}(0,0) = \tilde{b}^2_{0}(0) + \tilde{c}^2_{0}(0).
	\end{align*}
	The final vanishing assertions of the lemma follow.
\end{proof}

\begin{remark}
	There is a natural generalization to functions $h(t_1, \dots, t_m)$ of $m$ positive real variables,
	for which one may define a multi-zeta function $\zeta(s_1, \dots, s_m)$
	of $m$ complex variables.
	We will not pursue this further here.
\end{remark}

\begin{remark}
	The multi-zeta functions we consider here bear a formal resemblance to the zeta functions of Shintani \cite{shintani},
	but we do not know if there is any deeper connection.
\end{remark}

\subsection{Quotients of product manifolds}
\label{subsection: The geometry of finite quotients of product manifolds}

To establish some notational conventions,
let $\tilde{V} \to \tilde{N}$ and $V \to N$ be vector bundles
over manifolds $\tilde{N}$ and $N$.
By a vector bundle isomorphism,
we will mean a diffeomorphism $\phi: \tilde{N} \to N$ 
that lifts to a diffeomorphism (also called $\phi$) $\phi: \tilde{V} \to V$
between the total spaces
that acts as a linear isomorphism on fibers, i.e.,
for every $\tilde{x} \in \tilde{N}$, $\phi_{\tilde{x}}: \tilde{V}_{\tilde{x}} \to V_{x}$ is a linear isomorphism,
where $x = \phi(\tilde{x}) \in N$.
Such a $\phi$ induces a pullback operator $\phi^\ast: C^\infty(N, V) \to C^\infty(\tilde{N}, \tilde{V})$
defined by $\phi^\ast(\sigma)(\tilde{x}) = (\phi_x)^{-1} \sigma(x)$.
This extends also to a pullback operator $\phi^\ast:  C^\infty(N, \Lambda^\bullet V) \to C^\infty(\tilde{N}, \Lambda^\bullet \tilde{V})$.
We will write $\phi_{\tilde{x}}^\ast$
for the isomorphism of fibers $\phi_{\tilde{x}}^\ast: (\Lambda^\bullet V)_x \to (\Lambda^\bullet \tilde{V})_{\tilde{x}}$.

In the case when $\tilde{V}$ and $V$ are both flat, we
will say that a vector bundle isomorphism $\phi$ is flat if 
$d^{\Lambda^\bullet \tilde{V} } \circ \phi^\ast = \phi^\ast \circ d^{\Lambda^\bullet V} $,
where $d^{\Lambda^\bullet \tilde{V}}$ and  $d^{\Lambda^\bullet V}$ 
denote the respective exterior derivatives induced by the flat connections.
If in addition $\tilde{V}$ and $V$ are orthogonal or unitary flat bundles,
then we will say $\phi$ is a flat isometry if for every $\tilde{x} \in \tilde{N}$,
$\phi_x: \tilde{V}_{\tilde{x}} \to V_{x}$ is an orthogonal or unitary map.

Let $\operatorname{Diff}(\tilde{N}, N)$ denote the set of diffeomorphisms
$\tilde{N} \to N$; let $\operatorname{Diff}(N) := \operatorname{Diff}(N,N)$,
which forms a group.
Let $\operatorname{Isom}(\tilde{V} \to \tilde{N}, V \to N)$
denote the set of flat isometries; let $\operatorname{Isom}(V \to N):= \operatorname{Isom}(V \to N, V \to N)$, which forms a group.

Now we may describe the geometric setting in which we will define multi-torsion.
Suppose that $M_1$ and $M_2$
are compact oriented manifolds and  that $F_1 \to M_1$ and $F_2 \to M_2$ are orthogonal
or unitary flat vector bundles.
Let $F = \pi_1^\ast F_1 \otimes \pi_2^\ast F_2 \to M_1 \times M_2$
be the flat product vector bundle,
where for $j=1,2$, $\pi_j: M_1 \times M_2 \to M_j$ denotes the projection.
Let $\Gamma$ be a finite subgroup of $\operatorname{Isom}(F_1 \to M_1) \times \operatorname{Isom}(F_2 \to M_2)$,
which we view as a subgroup of $\operatorname{Isom}(F \to M_1 \times M_2)$.
For $j=1,2$, let $\Gamma_j$ denote the projection of $\Gamma$ onto $\operatorname{Isom}(F_j \to M_j)$.
We remark that each $\Gamma_j$ is a subgroup of $\operatorname{Isom}(F_j \to M_j)$,
but $\Gamma$ need not be the product group $\Gamma_1 \times \Gamma_2$.

We will assume $\Gamma$, viewed as a group of diffeomorphisms of $M_1 \times M_2$,
acts freely.
This means for every $\gamma = (\gamma_1, \gamma_2) \in  \Gamma \setminus \{ I \}$,
$\gamma$ has no fixed points as a diffeomorphism of $M_1 \times M_2$,
i.e., $\gamma_1$ and $\gamma_2$ cannot both have a fixed point.
But we will allow exactly one (or zero) of $\gamma_1$ and $\gamma_2$ to have fixed points.
Recall that a fixed point $x \in N$ of $\phi: N\to N$ is said to be nondegenerate when
$I - d\phi_x: T_x N \to T_x N$ is an isomorphism.

Let $M_\Gamma$ be the quotient of $M_1 \times M_2$
by $\Gamma$,
which is a smooth manifold
of dimension $n := n_1 + n_2$.
The action of $\Gamma$ on $F$ induces a quotient
orthogonal or unitary flat vector bundle $F_\Gamma \to M_\Gamma$.
Furthermore, for every $x \in M_\Gamma$, there is an open neighborhood $U \subset M_\Gamma$ containing $x$
such that $U$ is diffeomorphic to a product $\tilde{U}_1 \times \tilde{U}_2$
for some $\tilde{U}_1$ and $\tilde{U}_2$, open sets in $M_1$ and $M_2$, respectively,
and the restriction of $F_\Gamma$ to $U$ is flat-isometric with
the product of the restrictions $\pi_{\tilde{U}_1}^\ast F_1|_{\tilde{U}_1} \otimes \pi_{\tilde{U}_2}^\ast F_2|_{\tilde{U}_2}$.

This local product structure gives a bigrading of $F_\Gamma$-valued forms.
Let $E_\Gamma := \Lambda^\bullet F_\Gamma \to M_\Gamma$.
Then $E_\Gamma = \bigoplus_{q_1,q_2} E_\Gamma^{q_1,q_2}$,
where we may locally identify $E_\Gamma^{q_1,q_2}$ with $(\pi_1^\ast F_1 \otimes \Lambda^{q_1} T^\ast M_1) \otimes (\pi_2^\ast F_2 \otimes \Lambda^{q_2} T^\ast M_2)$.
We will refer to sections of $E_\Gamma^{q_1,q_2}$ as $(q_1, q_2)$-forms.
For $j=1,2$, we will use $Q_j$ 
to denote the operator that acts on $(q_1,q_2)$-forms
by multiplication by $q_j$.
Let $Q := Q_1 + Q_2$; note that $(-1)^Q = (-1)^{Q_1} (-1)^{Q_2}$.

Let $d$ be the exterior derivative on sections of $E_\Gamma$.
The local product structure ensures that there is a global decomposition of $d$:
$d = d_1 + d_2$,
where $d_1$ maps $(q_1, q_2)$-forms to $(q_1 + 1, q_2)$-forms,
and $d_2$ maps $(q_1, q_2)$-forms to $(q_1, q_2 + 1)$-forms.

To obtain furthermore a global decomposition of $d^\ast$ and $\Delta$,
we must consider metrics that induce a local {\em geometric} product structure on $M_\Gamma$.
Let $\M_{\Gamma_j}$ denote the set of $\Gamma_j$-invariant Riemannian metrics on $M_j$.
A choice of metrics $h_1 \in \M_{\Gamma_1}$ and $h_2 \in \M_{\Gamma_2}$
induces a product Riemannian metric $h_1 \times h_2$ on $M_1 \times M_2$.
Let $\Mprod$ denote the set of metrics arising in this way;
$\Mprod$ is identifiable with $\M_{\Gamma_1} \times \M_{\Gamma_2}$.

Consider a metric $h = h_1 \times h_2 \in \Mprod$;
$h$ is $\Gamma$-invariant,
and therefore descends to a well-defined metric, which we will also call $h$, on the quotient $M_\Gamma$.
Furthermore, $h$ induces a local geometric product structure in the sense that
the identification $U \cong \tilde{U}_1 \times \tilde{U}_2$ from above is a Riemannian isometry,
where $\tilde{U}_1 \times \tilde{U}_2 \subset M_1 \times M_2$ is given the product metric induced by $h_1$ and $h_2$.
Under these hypotheses we have that $d^\ast = d^\ast_1 + d^\ast_2$ and
\begin{align}
\Delta &= \Delta_1 + \Delta_2, \text{ where } \label{equation: decomposition of Laplacian} 
\Delta_j = d_j d^\ast_j + d^\ast_j d_j \text{ for } j=1,2.
\end{align}
Our assumptions imply the vanishing of the following (anti)commutators:
\begin{gather*}
\{d_1, d_2\} = 0,
\{d_1^\ast, d_2^\ast\} = 0,
\{d_1, d^\ast_2\} = 0,
\{d_2, d^\ast_1\} = 0, \\
[d_i, \Delta_j] = 0 \text{ (for $i,j =1, 2$)},
[d^\ast_i, \Delta_j] = 0 \text{ (for $i,j =1, 2$)}, \text{ and }
[\Delta_1, \Delta_2] = 0 .
\end{gather*}

\begin{remark}
	We will only consider metrics on $E_\Gamma$
	that are induced by Riemannian metrics on the product $M_1 \times M_2$
	and by the canonical metric on $F_\Gamma$.
	Also, unlike in previous sections,
	we will \textit{not} fix a density on $M_\Gamma$;
	rather, we will always take the Riemannian density induced by the
	product Riemannian metric.
\end{remark}


\subsection{Multi-torsion}
\label{subsection: Definition of multi-torsion}

Consider the generalized Laplacian 
on $F_\Gamma$-valued forms defined by, for $t_1, t_2 > 0$,
\begin{equation*}
\Delta(t_1, t_2) := t_1 \Delta_1 + t_2 \Delta_2.
\end{equation*}
$\Delta(t_1, t_2)$ is elliptic and nonnegative,
so its associated heat operator $e^{-\Delta(t_1, t_2)}$ is defined.
We have the following fundamental result,
which we will prove in \S \ref{subsection: proof of heat kernel multi-asymptotics}:
\begin{theorem} \label{theorem: heat kernel is multi-admissible}
	Suppose both $F_1 \to M_1$ and $F_2 \to M_2$ are acyclic.
	For $j=1,2$, let $\alpha_j \in C^\infty(M_j,  \End \Lambda^\bullet F_j )$, and suppose that $\alpha_j$ commutes with the action of  $\Gamma_j$,
	inducing an operator $\alpha = \alpha_1 \alpha_2 := \alpha_1 \otimes \alpha_2 $ that commutes with the action of $\Gamma$ and descends to an operator $\alpha_\Gamma \in C^\infty(M_\Gamma, \End  E_\Gamma)$.
	Then $\Tr \alpha_\Gamma e^{-\Delta(t_1, t_2)} $ is multi-admissible.
\end{theorem}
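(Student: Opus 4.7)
The plan is to lift the trace from $M_\Gamma$ to the cover $M_1 \times M_2$, use the commutativity $[\Delta_1, \Delta_2] = 0$ together with the product structure of the heat kernel on $M_1 \times M_2$ to factor the trace as a finite sum of products of single-variable traces on $M_1$ and $M_2$, and then invoke the single-variable admissibility theory of \S \ref{subsection: Heat kernels and zeta functions}.

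Sections of $E_\Gamma \to M_\Gamma$ are identified with $\Gamma$-invariant sections of $E \to M_1 \times M_2$, with orthogonal projector $P_\Gamma := \tfrac{1}{|\Gamma|} \sum_\gamma \gamma^\ast$. Writing $\widetilde\alpha$ and $\widetilde\Delta_j$ for the $\Gamma$-invariant lifts of $\alpha_\Gamma$ and $\Delta_j$, the standard averaging argument gives
\begin{align*}
\Tr_{M_\Gamma} \alpha_\Gamma e^{-\Delta(t_1,t_2)} = \frac{1}{|\Gamma|} \sum_{\gamma \in \Gamma} \Tr_{M_1 \times M_2} \gamma^\ast \widetilde\alpha\, e^{-t_1 \widetilde\Delta_1} e^{-t_2 \widetilde\Delta_2},
\end{align*}
using that $\widetilde\Delta_1$ and $\widetilde\Delta_2$ commute. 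Because $\widetilde\Delta_1$ is the pullback of the Hodge Laplacian $\Delta_1^{M_1}$ on $\Lambda^\bullet F_1 \to M_1$, acting trivially on the $M_2$-factor, with an analogous statement for $\widetilde\Delta_2$, the Schwartz kernel of $e^{-t_1\widetilde\Delta_1} e^{-t_2\widetilde\Delta_2}$ is the exterior tensor product of the two individual heat kernels. Since $\widetilde\alpha = \alpha_1 \otimes \alpha_2$ and $\gamma = (\gamma_1,\gamma_2)$ also respect the product structure, the trace factors as
\begin{align*}
\Tr_{M_1\times M_2} \gamma^\ast \widetilde\alpha\, e^{-t_1 \widetilde\Delta_1} e^{-t_2 \widetilde\Delta_2} = h_1^{\gamma_1}(t_1)\, h_2^{\gamma_2}(t_2), \quad h_j^{\gamma_j}(t_j) := \Tr_{M_j} \gamma_j^\ast \alpha_j\, e^{-t_j \Delta_j^{M_j}}.
\end{align*}

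It then suffices to show that each $h_j^{\gamma_j}$ is admissible in the sense of Definition \ref{defn: admissible}, since a finite sum of products of admissible functions in complementary variables is readily checked to satisfy conditions \ref{first}--\ref{last}. Property \ref{A2} follows from acyclicity of $F_j$: Hodge theory identifies $\ker \Delta_j^{M_j}$ with $H^\bullet(M_j, F_j) = 0$, producing a uniform spectral gap above zero, while $\gamma_j^\ast \alpha_j$ is bounded. For the small-$t_j$ expansion \ref{A1}, the argument splits on whether $\gamma_j$ has fixed points on $M_j$: if $\gamma_j$ is fixed-point-free, compactness gives a positive lower bound on $d(x,\gamma_j x)$, so off-diagonal short-time decay of the heat kernel forces $h_j^{\gamma_j}(t_j) = O(e^{-c/t_j})$ and the expansion is trivially zero; otherwise, an equivariant Minakshisundaram--Pleijel parametrix localized at the fixed-point set of $\gamma_j$ produces the required polyhomogeneous expansion. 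I expect this equivariant small-time expansion to be the main technical input, but only the existence of such an expansion is needed here, and this is standard in the Lefschetz heat-kernel literature.
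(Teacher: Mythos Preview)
Your proposal is correct and follows essentially the same route as the paper: lift to the cover, average over $\Gamma$, use the tensor-product structure of the heat kernel on $M_1\times M_2$ to factor each summand as $h_1^{\gamma_1}(t_1)\,h_2^{\gamma_2}(t_2)$, and then verify that each factor is admissible via standard equivariant heat-trace asymptotics together with the spectral gap coming from acyclicity. The only cosmetic difference is that the paper carries out the lifting at the level of Schwartz kernels via the Selberg principle and integration over a fundamental domain, whereas you phrase it operator-theoretically via the averaging projector $P_\Gamma$; these are equivalent formulations of the same computation.
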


This allows us to define the associated multi-zeta function as in \eqref{def of multi-zeta}, i.e.:
\begin{defn}
	Suppose both $F_1 \to M_1$ and $F_2 \to M_2$ are acyclic.
	Let $\alpha_\Gamma$ be as above.
	For $\Real s_1 > \frac{n_1}{2}$ and $\Real s_2 > \frac{n_2}{2}$, define $\zeta(s_1, s_2; \Delta(t_1, t_2), \alpha_\Gamma)$ by the following:
	\begin{align} \label{def of heat multi-zeta}
	\zeta(s_1, s_2; \Delta(t_1, t_2), \alpha_\Gamma) 
	:=  \frac{1}{\Gamma(s_1) \Gamma(s_2)} \int_0^\infty \int_0^\infty 
	t_1^{s_1} t_2^{s_2} \Tr \alpha_\Gamma e^{-\Delta(t_1, t_2)}  \, \frac{dt_1}{t_1} \frac{dt_2}{t_2}.
	\end{align}
\end{defn}
\begin{remark}
	The assumption of acyclicity in both factors
	is essential to the decay of $\Tr \alpha_\Gamma e^{-\Delta(t_1, t_2)}$ in the limits $t_1 \to \infty$ (for fixed $t_2$)
	and $t_2 \to \infty$ (for fixed $t_1$),
	which ensure the integrability of $\Tr \alpha_\Gamma e^{-\Delta(t_1, t_2)}$.
	We have also imposed the strong assumption
	that the group $\Gamma$ is finite,
	but we expect that this assumption is less essential
	and hope that it could be weakened
	with more delicate analysis than we have pursued here.
\end{remark}

From Theorem \ref{theorem: heat kernel is multi-admissible} and Lemma \ref{lemma: multi-zeta properties}, we immediately obtain:
\begin{lemma} Under the assumptions above, the 
	integral defining $\zeta(s_1, s_2; \Delta(t_1, t_2), \alpha_\Gamma)$ in \eqref{def of heat multi-zeta}
	converges on the set of points $(s_1, s_2) \in \mathbb{C}^2$ such that
	$\Real s_1 > \frac{n_1}{2}$ and $\Real s_2 > \frac{n_2}{2}$
	and defines a holomorphic function there. Furthermore,
	$\zeta(s_1, s_2; \Delta(t_1, t_2), \alpha)$ admits a unique meromorphic extension to all of $\mathbb{C}^2$;
	the extension is holomorphic at the origin.
\end{lemma}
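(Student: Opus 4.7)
The plan is to assemble this lemma directly from the two results it is advertised as following from. First I would set $h(t_1,t_2) := \Tr \alpha_\Gamma e^{-\Delta(t_1,t_2)}$ and invoke Theorem \ref{theorem: heat kernel is multi-admissible}, whose hypotheses are exactly the standing assumptions on $\alpha_\Gamma$ and the acyclicity of $F_1$ and $F_2$, to conclude that $h$ is multi-admissible in the sense of Definition \ref{defn: multi-admissible}. The conclusions of the lemma are then precisely the output of Lemma \ref{lemma: multi-zeta properties} applied to this $h$.

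The only point that requires any care is identifying the abstract convergence half-plane $\Real s_j > -p_0^j$ provided by Lemma \ref{lemma: multi-zeta properties} with the stated region $\Real s_j > \tfrac{n_j}{2}$. This amounts to reading off the leading exponent $p_0^j$ from the multi-asymptotic expansion \ref{first} guaranteed by Theorem \ref{theorem: heat kernel is multi-admissible}. Since $\Delta_j$ is a second-order elliptic operator on a manifold of dimension $n_j$, the standard heat trace asymptotics of \eqref{equation: heat trace asymptotics} (applied to $\Delta_j$ together with the bounded smoothing factor contributed by $e^{-t_{3-j} \Delta_{3-j}}$ and $\alpha_\Gamma$) yield leading behavior of order $t_j^{-n_j/2}$ in the $t_j$ variable, so $p_0^j = -n_j/2$. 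This matches the claimed bound $\Real s_j > n_j/2$ and gives convergence as well as holomorphy of the integral on that product of half-planes; the factor $1/(\Gamma(s_1)\Gamma(s_2))$ is entire on $\mathbb{C}^2$ and contributes nothing to poles.

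The existence and uniqueness of the meromorphic continuation to all of $\mathbb{C}^2$, and the holomorphy at the origin with value $a_{0,0}$, are then direct outputs of Lemma \ref{lemma: multi-zeta properties}. There is no serious obstacle here, as the genuine analytic work has already been carried out: the nontrivial step is the two-parameter heat kernel multi-asymptotics in Theorem \ref{theorem: heat kernel is multi-admissible}, and Lemma \ref{lemma: multi-zeta properties} is essentially a two-variable Mellin transform computation. The present lemma is just a packaging of those two into the form in which multi-torsion will be defined in Definition \ref{definition: multi-torsion}.
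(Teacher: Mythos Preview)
Your proposal is correct and matches the paper's approach exactly: the paper states this lemma with no proof beyond the phrase ``From Theorem \ref{theorem: heat kernel is multi-admissible} and Lemma \ref{lemma: multi-zeta properties}, we immediately obtain,'' and you have simply spelled out that immediate implication. Your additional remark identifying $p_0^j = -n_j/2$ from the standard heat trace asymptotics is a useful clarification that the paper leaves implicit.
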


This allows us to make the:

\begin{defn} \label{definition: multi-torsion}
	For $(h_1, h_2) \in \Mprod$, we define multi-torsion $MT = MT(M_\Gamma, F_\Gamma, h_1, h_2)$ by
	\begin{align}
	MT(M_\Gamma, F_\Gamma, h_1, h_2)  := \frac{1}{4}  \left. \frac{\partial^2}{\partial s_1 \partial s_2} \right|_{(s_1, s_2)=(0,0)} \zeta(s_1, s_2; \Delta(t_1, t_2), (-1)^Q Q_1 Q_2). \label{eq: def of MT}
	\end{align}
	If we wish to emphasize only the dependence of the multi-torsion on the metrics $h_1$ and $h_2$,
	we will denote it by $MT(h_1, h_2)$.
\end{defn}

\begin{remark}
	The multi-zeta function
	in  \eqref{eq: def of MT} may be written as (for $\Real s_1 > \frac{n_1}{2}$ and $\Real s_2 > \frac{n_2}{2}$)
	\begin{equation}
	\begin{split} \label{equation: multi-torsion zeta function}
	&\zeta(s_1, s_2; \Delta(t_1, t_2), (-1)^Q Q_1 Q_2)  \\
	&= \frac{1}{\Gamma(s_1) \Gamma(s_2)} \int_0^\infty \int_0^\infty 
	t_1^{s_1} t_2^{s_2} \Tr (-1)^Q Q_1 Q_2 e^{-\Delta(t_1, t_2)} \, \frac{dt_1}{t_1} \frac{dt_2}{t_2} \\
	&= \frac{1}{\Gamma(s_1) \Gamma(s_2)} \int_0^\infty \int_0^\infty 
	t_1^{s_1} t_2^{s_2} \sum_{q_1, q_2} (-1)^{q_1 + q_2} q_1 q_2 \Tr e^{-(t_1 \Delta_1^{q_1} + t_2 \Delta_2^{q_2})} \, \frac{dt_1}{t_1} \frac{dt_2}{t_2},
	\end{split}
	\end{equation}
	where $t_1 \Delta_1^{q_1} + t_2 \Delta_2^{q_2} $ denotes the restriction of $t_1 \Delta_1 + t_2 \Delta_2$ to $(q_1, q_2)$-forms.
\end{remark}

\begin{remark}
	To motivate Definition \ref{definition: multi-torsion},
	let us consider the special case in which $\Gamma$ is the trivial group,
	i.e., $M_{\Gamma}$ is simply the product $M_1 \times M_2$
	and $F_\Gamma = \pi_1^\ast F_1 \otimes \pi_2^\ast F_2 \to M_1 \times M_2$.
	Then
	\begin{align*}
	\Tr (-1)^Q Q_1 Q_2 e^{-\Delta(t_1, t_2)}= \left( \Tr (-1)^{Q_1} Q_1 e^{-t_1 \Delta_1} \right) \left( \Tr (-1)^{Q_2} Q_2 e^{-t_2 \Delta_2} \right),
	\end{align*}
	and therefore  the multi-zeta function in \eqref{equation: multi-torsion zeta function} decomposes as a product:
	\begin{align*}
	\zeta(s_1, s_2; \Delta(t_1, t_2), (-1)^Q Q_1 Q_2)  
	&= \zeta(s_1; \Delta_1, (-1)^{Q_1}Q_1) \, \zeta(s_2; \Delta_2, (-1)^{Q_2}Q_2).
	\end{align*}
	Recalling Definition \ref{definition: torsion} of analytic torsion
	and the observation \eqref{eq: derivative of product}, we have that in this case
	the multi-torsion is the product of the logarithms of the analytic torsions of the factors:
	\begin{align*}
	MT(M_1 \times M_2, \pi_1^\ast F_1 \otimes \pi_2^\ast F_2, h_1, h_2  ) 
	&= \log T(M_1, \Lambda^\bullet F_1, h_1) \log T(M_2,  \Lambda^\bullet F_2, h_2).
	\end{align*}
	Thus in this case, if the dimensions $n_1$ and $n_2$ are both odd, then
	$MT(h_1, h_2)$ is trivially independent of the metrics $h_1$ and $h_2$ by Corollary \ref{coro: n odd, acyclic}
	since we have assumed $F_1$ and $F_2$ are both acyclic.
\end{remark}

Our goal is to study the dependence of $MT$ on the metric in the general case.
First, in even dimensions, we have the following vanishing result
generalizing Theorem 2.3 of \cite{ray-singer-71}:
\begin{theorem} \label{theorem: even vanishing of MT}
	Suppose that for at least one of either   $j=1$ or $j=2$,
	$n_j$ is even and $\Gamma_j$ acts on $M_j$ by orientation-preserving diffeomorphisms.
	Then $MT(h_1, h_2) = 0$ for any $(h_1, h_2) \in \Mprod$.
\end{theorem}
\begin{proof}
	Without loss of generality, we may assume $j=1$.
	The Hodge star on $M_1$ induces an operator $\ast_1$ on sections of $F \otimes \Lambda^\bullet T^\ast (M_1 \times M_2)$.
	Since $\Gamma_1$ acts by orientation-preserving isometries, $\ast_1$ descends to a well-defined invertible
	operator on the quotient, which we also denote by $\ast_1$, that maps $(q_1, q_2)$-forms to $(n_1 - q_1, q_2)$-forms.
	Thus we have the commutation relations
	\begin{align*}
	\ast_1 Q_1 &=  (n_1 I - Q_1)\ast_1, \\
	\ast_1 Q_2 &=  Q_2 \ast_1, \\
	\ast_1 (-1)^Q &= (-1)^{n_1} (-1)^Q \ast_1, \\
	\ast_1 \Delta(t_1, t_2) &= \Delta(t_1, t_2) \ast_1.
	\end{align*}
	The last implies that $\ast_1$ commutes also with $e^{-\Delta(t_1, t_2)}$.
	We have the following, using the cyclicity of the trace:
	\begin{align*}
	\Tr (-1)^Q Q_1 Q_2 e^{-\Delta(t_1, t_2)}  &=  \Tr \ast_1^{-1}(-1)^Q Q_1 Q_2 e^{-\Delta(t_1, t_2)} \ast_1 \\
	&= (-1)^{n_1} \Tr  (-1)^Q (n_1 I - Q_1) Q_2 e^{-\Delta(t_1, t_2)}.
	\end{align*}
	In the case when $n_1$ is even, this implies
	\begin{equation*} 
	\Tr (-1)^Q Q_1 Q_2 e^{-\Delta(t_1, t_2)}  = \frac{n_1}{2} \Tr  (-1)^Q Q_2 e^{-\Delta(t_1, t_2)},
	\end{equation*}
	which we claim vanishes. This vanishing would show that the zeta function of \eqref{equation: multi-torsion zeta function}
	vanishes identically, which would prove the proposition.
	
	To prove the claim that $\Tr  (-1)^Q Q_2 e^{-\Delta(t_1, t_2)} = 0$,
	we will use a version of McKean-Singer's argument
	relating the index to the heat equation \cite{mckean-singer}.
	We start by differentiating with respect to $t_1$.
	Since $ \frac{\partial}{\partial t_1} \Delta(t_1, t_2) = \Delta_1$,
	which commutes with $\Delta$ and therefore with the resolvent $(\Delta - z)^{-1}$,
	we obtain
	\begin{align*}
	&\frac{\partial}{\partial t_1} \Tr  (-1)^Q Q_2 e^{-\Delta(t_1, t_2)} \\
	&= - \Tr  (-1)^Q Q_2 e^{-\Delta(t_1, t_2)} \Delta_1 \\
	&= - \Tr  (-1)^Q Q_2 e^{-\Delta(t_1, t_2)} d_1 d^\ast_1 - \Tr  (-1)^Q Q_2 e^{-\Delta(t_1, t_2)}  d^\ast_1 d_1.
	\end{align*}
	But this is zero for the following reason: the cyclicity of the trace
	and the fact that $d_1$ commutes with $Q_2$ and $e^{-\Delta(t_1, t_2)}$ and anticommutes with $(-1)^Q$
	imply that
	\begin{equation*}
	\Tr  (-1)^Q Q_2 e^{-\Delta(t_1, t_2)} d_1 d^\ast_1 = - \Tr (-1)^Q Q_2 e^{-\Delta(t_1, t_2)}  d^\ast_1 d_1.
	\end{equation*}
	We have shown that for each $t_2 >0$, $\Tr  (-1)^Q Q_2 e^{-\Delta(t_1, t_2)}$ is constant in $t_1$.
	But Theorem \ref{theorem: heat kernel is multi-admissible} gives that $\Tr  (-1)^Q Q_2 e^{-\Delta(t_1, t_2)} \to 0$ as $t_1 \to \infty$,
	so in fact $\Tr  (-1)^Q Q_2 e^{-\Delta(t_1, t_2)}$
	vanishes for all $t_1, t_2>0$.
\end{proof}

Thus the interesting case (at least under the orientation-preserving assumption)
is when both dimensions $n_1$ and $n_2$ are odd.
The metric independence of analytic torsion, Corollary \ref{coro: n odd, acyclic}, generalizes to the following,
which is our main theorem.

\begin{theorem} \label{theorem: multi torsion metric independence}
	For $j = 1, 2$,
	if $n_j$ is odd and if for every $\gamma_j \in \Gamma_j $,
	$\gamma_j$ is either orientation-preserving or has nondegenerate fixed points as a diffeomorphism of $M_j$,
	then $MT(h_1, h_2)$ is independent of the metric $h_j$.
\end{theorem}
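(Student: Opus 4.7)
The plan is to adapt the Stokes' theorem argument of \S \ref{subsection: proof of metric anomaly} to the two-parameter setting, with the closed two-form $\omega_{MT}$ on $\Mprod$ introduced in \S \ref{subsection: the multi form} playing the role of $\omega_T$. Without loss of generality take $j = 1$, and let $h_1(u)$, $u \in [0,1]$, be a smooth path in $\M_{\Gamma_1}$ with $h_2 \in \M_{\Gamma_2}$ fixed. The first step, in analogy with Lemma \ref{lemma: zeta as integral of omega}, is to verify that the pullback of $\omega_{MT}$ to the surface $\Sigma_{h_1(u), h_2} \subset \Mprod$ parametrized by $(t_1, t_2) \mapsto (t_1^{Q_1} h_1(u), \, t_2^{Q_2} h_2)$, regularized by the double Mellin transform $\frac{1}{\Gamma(s_1)\Gamma(s_2)} \int t_1^{s_1} t_2^{s_2}$, reproduces the multi-zeta function in \eqref{equation: multi-torsion zeta function} whose mixed partial derivative at the origin computes $4 \, MT(h_1(u), h_2)$.

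Next, for $0 < \e_j < A_j$, form the three-dimensional region $R_{\e, A} \subset \Mprod$ parametrized by $(u, t_1, t_2) \in [0,1] \times [\e_1, A_1] \times [\e_2, A_2] \mapsto (t_1^{Q_1} h_1(u), \, t_2^{Q_2} h_2)$. Since $\omega_{MT}$ is closed, on $R_{\e, A}$ one has $\delta^{\Mprod}(t_1^{s_1} t_2^{s_2} \omega_{MT}) = (s_1 t_1^{s_1 - 1} dt_1 + s_2 t_2^{s_2 - 1} dt_2) \wedge \omega_{MT}$. Applying Stokes' theorem to this three-form and sending $\e_j \to 0$, $A_j \to \infty$, the four ``temporal'' boundary faces contribute terms that vanish in the limit (thanks to the multi-admissibility of Theorem \ref{theorem: heat kernel is multi-admissible}), while the two $u$-faces reproduce the multi-zeta functions at the endpoints. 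Dividing by $\Gamma(s_1) \Gamma(s_2)$ and taking $\frac{\partial^2}{\partial s_1 \partial s_2}$ at $(0,0)$ (justified by Lemma \ref{lemma: multi-zeta properties}), one obtains an identity of the shape
\begin{equation*}
4 \bigl[MT(h_1(1), h_2) - MT(h_1(0), h_2)\bigr] = \int_0^1 V(u) \, du,
\end{equation*}
where $V(u)$ is, up to a universal constant, the $t_1^0 t_2^0$ coefficient in the multi-asymptotic expansion of $\Tr (-1)^Q Q_2 \, h_1(u)^{-1} \dot{h}_1(u) \, e^{-\Delta(t_1, t_2)}$.

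The proof is completed by showing $V(u) \equiv 0$ under the given hypotheses, and this is the main obstacle. When $\Gamma$ is trivial, the heat trace factors as a product over $M_1$ and $M_2$ and the vanishing reduces to the absence of a $t_1^0$ term in the Seeley expansion on the odd-dimensional $M_1$, exactly as in Corollary \ref{coro: a'_0 vanishes}. In the general quotient case, the heat trace on $M_\Gamma$ decomposes as a sum over $\gamma = (\gamma_1, \gamma_2) \in \Gamma$ of $\gamma$-twisted heat traces on the cover $M_1 \times M_2$, and the $t_1$-asymptotic expansion of each summand localizes at the fixed-point set of $\gamma_1$ in $M_1$. For $\gamma_1$ orientation-preserving, a Hodge-star conjugation argument in the spirit of Proposition \ref{prop: even vanishing of MT}, combined with the odd-dimensional parity underlying Corollary \ref{coro: a'_0 vanishes}, rules out a $t_1^0$ term. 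For $\gamma_1$ with nondegenerate, hence isolated, fixed points, a Lefschetz-type local analysis of the $\gamma_1$-twisted heat kernel shows that its $t_1$-expansion consists entirely of exponentially decaying terms with no polynomial-in-$t_1$ coefficients, so in particular no $t_1^0$ term appears. Assembling these cases gives $V(u) = 0$; this case-by-case asymptotic analysis is the technical core of \S \ref{subsection: Vanishing of constant terms}.
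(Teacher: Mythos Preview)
Your overall Stokes' theorem strategy matches the paper's proof, and the structure---three-dimensional region in $\Mprod$, closedness of $\omega_{MT}$, limits in $t_1,t_2$, then a vanishing argument for the $u$-integrand---is correct. Two points need correction.

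First, a minor one: after Stokes, the interior integral carries a factor of $s_1$ only (the pullback of $\omega_{MT}$ to the cube has no $du\wedge dt_1$ component, since $b_2$ involves only $dt_2$). Hence $\frac{\partial^2}{\partial s_1\partial s_2}\big|_{(0,0)}$ of the interior term produces $\int_0^1 \frac{\partial\zeta}{\partial s_2}(0,0)\,du$, not the constant term $a_{0,0}=\zeta(0,0)$. By Lemma~\ref{lemma: multi-zeta properties}, $\frac{\partial\zeta}{\partial s_2}(0,0)$ vanishes exactly when there are no $(t_1)^0$ terms in the multi-asymptotic expansions---a strictly stronger requirement than $a_{0,0}=0$. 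Your subsequent argument is in fact aimed at this stronger vanishing, so the misidentification of $V(u)$ is harmless.

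Second, and this is a genuine gap: your account of the nondegenerate fixed point case is incorrect. When $\gamma_1$ has isolated nondegenerate fixed points, the $\gamma_1$-twisted heat trace is \emph{not} exponentially small as $t_1\to 0$. The Lefschetz-type analysis (Lemma~\ref{lemma: phi t^0 term}) gives
\[
\Tr \sigma\,\gamma_1^\ast e^{-t_1\Delta_1}
= \sum_{x_0\in\operatorname{Fix}(\gamma_1)} \frac{\tr \sigma_{x_0}(\gamma_1)^\ast_{x_0}}{|\det(I-d(\gamma_1)_{x_0})|} + O(t_1^{1/2}),
\]
which has a generically nonzero constant term. What makes it vanish here is the \emph{specific} form of the auxiliary operator: the variation contributes $b_1=\ast_1^{-1}\frac{d\ast_1}{du}$ (cf.\ Remark~\ref{remark: hodge star}), and conjugating $(-1)^Q\ast_1^{-1}\frac{d\ast_1}{du}\,(\gamma_1)^\ast$ by $\ast_1$ at a fixed point flips its sign (Corollary~\ref{corollary: t^0 vanishing}), forcing each fiber trace $\tr \sigma_{x_0}(\gamma_1)^\ast_{x_0}$ to vanish. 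This pointwise algebraic cancellation---not any decay of the twisted kernel---is the mechanism that kills the $(t_1)^0$ term in the nondegenerate case.
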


We will give the proof in \S \ref{subsection: proof of multi independence}.
The hypotheses ensure the vanishing of  boundary terms in our Stokes' theorem argument
involving the closed two-form $\omega_{MT}$ (to be introduced in \S \ref{subsection: the multi form}).


\section{Heat kernel multi-asymptotics}
\label{sec: Heat kernel multi-asymptotics}

\subsection{Multi-admissibility of the heat trace}
\label{subsection: proof of heat kernel multi-asymptotics}
We will now prove Theorem \ref{theorem: heat kernel is multi-admissible},
which we restate for convenience:
\begin{theorem*}[Theorem \ref{theorem: heat kernel is multi-admissible}]
	Suppose both $F_1 \to M_1$ and $F_2 \to M_2$ are acyclic.
	For $j=1,2$, let $\alpha_j \in C^\infty(M_j,  \End \Lambda^\bullet F_j )$, and suppose that $\alpha_j$ commutes with the action of  $\Gamma_j$,
	inducing an operator $\alpha = \alpha_1 \alpha_2 := \alpha_1 \otimes \alpha_2 $ that commutes with the action of $\Gamma$ and descends to an operator $\alpha_\Gamma \in C^\infty(M_\Gamma, \End  E_\Gamma)$.
	Then $\Tr \alpha_\Gamma e^{-\Delta(t_1, t_2)} $ is multi-admissible.
\end{theorem*}

\begin{proof}
	Let $\Pi: M_1 \times M_2 \to M_\Gamma$ denote the projection,
	which extends to a map from $F$ to $F_\Gamma$
	that is locally a  flat vector bundle isometry.
	For $x \in M_\Gamma$ and $\tilde{x} \in \Pi^{-1}(x) \subset M_1 \times M_2$, let $\Pi_{\tilde{x}}^\ast: (\Lambda^\bullet F_\Gamma)_{x} \to (\Lambda^\bullet F)_{\tilde{x}}$
	be the induced identification of fibers.	
	By the Selberg principle \cite{selberg},
	we have the following relationship between heat kernels:
	\begin{equation} \label{equation: sum on gamma}
	k^{M_\Gamma}(t_1, t_2; x,y)  = \sum_{\gamma \in \Gamma} \left( \Pi^\ast_{\tilde{x}} \right)^{-1}   \gamma_{\tilde{x}}^\ast  
	k^{M_1 \times M_2}(t_1, t_2; \gamma(\tilde{x}), \tilde{y})   \Pi^\ast_{\tilde{y}},
	\end{equation}
	where $\tilde{x} = (\tilde{x}_1, \tilde{x}_2), \tilde{y} = (\tilde{y}_1, \tilde{y}_2) \in M_1 \times M_2$
	are any lifts of $x,y \in M_\Gamma$.
	The heat kernel $k^{M_1 \times M_2}(t_1, t_2; \tilde{x}, \tilde{y})$ on $F$-valued forms
	is the tensor product of the factor heat kernels in the following sense:
	\begin{equation} \label{kernel tensor}
	k^{M_1 \times M_2}(t_1, t_2; \tilde{x}, \tilde{y}) =  k^{M_1}(t_1; \tilde{x}_1, \tilde{y}_1) \otimes k^{M_2}(t_2; \tilde{x}_2, \tilde{y}_2),
	\end{equation}
	where for $j=1,2$, $k^{M_j}(t; \tilde{x}_j, \tilde{y}_j) \in \Hom( (\Lambda^\bullet F_{j})_{\tilde{y}_j}, ( \Lambda^\bullet F_{j})_{\tilde{x}_j}) $
	denotes the heat kernel on $F_j$-valued forms on $M_j$.
	Implicit in the notation is the identification of fibers
	$(\Lambda^\bullet F)_{\tilde{z}} \cong (\Lambda^\bullet F_{1})_{\tilde{z}_1} \otimes (\Lambda^\bullet F_{2})_{\tilde{z}_2} $.
	
	From \eqref{equation: sum on gamma}
	and \eqref{kernel tensor} , we obtain
	\begin{multline*}  
	k^{M_\Gamma}(t_1, t_2; x,y) \\
	=  \sum_{\gamma = (\gamma_1, \gamma_2) \in \Gamma}
	\left( \Pi^\ast_{\tilde{x}} \right)^{-1}   \left(
	(\gamma_1)_{\tilde{x}_1}^\ast k^{M_1}(t_1; \gamma_1(\tilde{x}_1), \tilde{y}_1 ) \otimes
	(\gamma_2)_{\tilde{x}_2}^\ast k^{M_2}(t_2; \gamma_2(\tilde{x}_2), \tilde{y}_2 )  \right)
	\Pi^\ast_{\tilde{y}} .
	\end{multline*}
	Now, at a point $(x,x)$ on the diagonal of $M_\Gamma \times M_\Gamma$, we have
	\begin{align*}
	& k^{M_\Gamma}(t_1, t_2; x,x) \nonumber \\
	&=  \sum_{\gamma = (\gamma_1, \gamma_2) \in \Gamma}  
	\left( \Pi^\ast_{\tilde{x}} \right)^{-1}   \left(
	(\gamma_1)_{\tilde{x}_1}^\ast k^{M_1}(t_1; \gamma_1(\tilde{x}_1), \tilde{x}_1 ) \otimes
	(\gamma_2)_{\tilde{x}_2}^\ast k^{M_2}(t_2; \gamma_2(\tilde{x}_2), \tilde{x}_2 )  \right)
	\Pi^\ast_{\tilde{x}} . 
	\end{align*}
	Let $\alpha = \alpha_1 \otimes \alpha_2$ and $\alpha_\Gamma$ be as in the hypotheses
	of the theorem.
	Then the integral kernel of $\alpha_\Gamma e^{-\Delta(t_1, t_2)}$ has the value at $(x,x)$
	\begin{align*}
	& (\alpha_\Gamma)_x k^{M_\Gamma}(t_1, t_2; x,x) \nonumber \\
	&=  (\alpha_\Gamma)_x \sum_{\gamma = (\gamma_1, \gamma_2) \in \Gamma}  
	\left( \Pi^\ast_{\tilde{x}} \right)^{-1}   \left(
	(\gamma_1)_{\tilde{x}_1}^\ast k^{M_1}(t_1; \gamma_1(\tilde{x}_1), \tilde{x}_1 ) 
	\otimes
	(\gamma_2)_{\tilde{x}_2}^\ast k^{M_2}(t_2; \gamma_2(\tilde{x}_2), \tilde{x}_2 )  \right)
	\Pi^\ast_{\tilde{x}}  \\
	&=  \sum_{\gamma = (\gamma_1, \gamma_2) \in \Gamma}  
	\left( \Pi^\ast_{\tilde{x}} \right)^{-1}   \left(
	\left( \alpha_1 \right)_{\tilde{x}_1}  (\gamma_1)_{\tilde{x}_1}^\ast k^{M_1}(t_1; \gamma_1(\tilde{x}_1), \tilde{x}_1 )
	\otimes \right. 
	\left. \left( \alpha_2 \right)_{\tilde{x}_2}  (\gamma_2)_{\tilde{x}_2}^\ast k^{M_2}(t_2; \gamma_2(\tilde{x}_2), \tilde{x}_2 )  \right)
	\Pi^\ast_{\tilde{x}} 
	\end{align*}
	since $(\alpha_\Gamma)_x \left( \Pi^\ast_{\tilde{x}} \right)^{-1} = \left( \Pi^\ast_{\tilde{x}} \right)^{-1} \alpha_{\tilde{x}}$
	and $\alpha= \alpha_1 \otimes \alpha_2$.
	
	We now take the trace on the fiber at $x$. By the cyclicity of the trace
	and the multiplicativity of the trace of a tensor product, we obtain
	\begin{multline*}
	\tr (\alpha_\Gamma)_x k^{M_\Gamma}(t_1, t_2; x,x)
	=  \sum_{\gamma = (\gamma_1, \gamma_2) \in \Gamma}  
	\left(
	\tr \left( \alpha_1 \right)_{\tilde{x}_1}  (\gamma_1)_{\tilde{x}_1}^\ast k^{M_1}(t_1; \gamma_1(\tilde{x}_1), \tilde{x}_1 ) \right) \\
	\left( \tr \left( \alpha_2 \right)_{\tilde{x}_2}  (\gamma_2)_{\tilde{x}_2}^\ast k^{M_2}(t_2; \gamma_2(\tilde{x}_2), \tilde{x}_2 )  \right). 
	\end{multline*}
	For the $L^2$-trace, we integrate over $M_\Gamma$ to obtain
	\begin{align}
	\Tr \alpha_\Gamma e^{-\Delta(t_1, t_2)} &=
	\int_{M_\Gamma} \tr (\alpha_\Gamma)_x k^{M_\Gamma}(t_1, t_2; x,x)  \, \vol_{M_\Gamma} (x) \nonumber \\ 
	&= \sum_{\gamma \in \Gamma } h_\gamma(t_1, t_2), \label{eq: sum over gamma}
	\end{align}
	where $h_\gamma(t_1, t_2)$ is defined by
	\begin{multline*}
	h_\gamma(t_1, t_2)  := 
	\int_{M_\Gamma} \left(  \tr \left( \alpha_1 \right)_{\tilde{x}_1}  (\gamma_1)_{\tilde{x}_1}^\ast k^{M_1}(t_1; \gamma_1(\tilde{x}_1), \tilde{x}_1 ) \right) \\
	\left( \tr \left( \alpha_2 \right)_{\tilde{x}_2}  (\gamma_2)_{\tilde{x}_2}^\ast k^{M_2}(t_2; \gamma_2(\tilde{x}_2), \tilde{x}_2 )  \right) \vol_{M_\Gamma} (x).
	\end{multline*}
	Let $D \subset M_1 \times M_2$ be a fundamental domain for the cover $M_1 \times M_2 \to M_\Gamma$.
	Then we have 
	\begin{multline*}
	h_\gamma(t_1, t_2) = 
	\int_{D} \left( \tr \left( \alpha_1 \right)_{\tilde{x}_1}  (\gamma_1)_{\tilde{x}_1}^\ast k^{M_1}(t_1; \gamma_1(\tilde{x}_1), \tilde{x}_1 ) \right)  \\
	\left( \tr \left( \alpha_2 \right)_{\tilde{x}_2}  (\gamma_2)_{\tilde{x}_2}^\ast k^{M_2}(t_2; \gamma_2(\tilde{x}_2), \tilde{x}_2 )  \right) \vol_{M_1 \times M_2} (\tilde{x}).
	\end{multline*}
	But by the $\Gamma$-invariance of all the relevant operators,
	this is the same (up to the size of $\Gamma$, which is the number of sheets in the cover) as integrating over all of $M_1 \times M_2$,
	which we can decompose as a product:
	\begin{align*}
	h_\gamma(t_1, t_2)  =
	& \frac{1}{|\Gamma|} \int_{M_1 \times M_2}    \left( \tr \left( \alpha_1 \right)_{\tilde{x}_1}  (\gamma_1)_{\tilde{x}_1}^\ast k^{M_1}(t_1; \gamma_1(\tilde{x}_1), \tilde{x}_1 )  \right)  \\
	& \hspace{57pt}     \left(  \tr \left( \alpha_2 \right)_{\tilde{x}_2}  (\gamma_2)_{\tilde{x}_2}^\ast k^{M_2}(t_2; \gamma_2(\tilde{x}_2), \tilde{x}_2 )  \right) \, \vol_{M_1 \times M_2}(\tilde{x}) \\
	=&      \frac{1}{|\Gamma|}  \left( \int_{M_1}  \left( \tr \left( \alpha_1 \right)_{\tilde{x}_1}  (\gamma_1)_{\tilde{x}_1}^\ast k^{M_1}(t_1; \gamma_1(\tilde{x}_1), \tilde{x}_1 ) \right) \vol_{M_1}(\tilde{x}_1) \right) \\
	& \hspace{17pt}   \left( \int_{M_2}  \left(  \tr \left( \alpha_2 \right)_{\tilde{x}_2}  (\gamma_2)_{\tilde{x}_2}^\ast k^{M_2}(t_2; \gamma_2(\tilde{x}_2), \tilde{x}_2) \right) \vol_{M_2}(\tilde{x}_2)  \right).
	\end{align*}
	We claim that for $j=1,2$, the following is an admissible function of $t_j$
	in the sense of Definition \ref{defn: admissible}:
	\begin{equation}  \label{equation: gamma, j factor}
	\int_{M_j} \left(
	\tr \left( \alpha_j \right)_{\tilde{x}_j}  (\gamma_j)_{\tilde{x}_j}^\ast k^{M_1}(t_1; \gamma_j(\tilde{x}_j), \tilde{x}_j ) \right) \vol_{M_j}(\tilde{x}_j).
	\end{equation}
	To see this, note
	that the expression \eqref{equation: gamma, j factor} is precisely the trace of the operator $\alpha_j \gamma^\ast_j e^{-t_j \Delta_j}$ on $L^2(M_j, \Lambda^\bullet F_j)$.
	The $t_j \to 0$ asymptotic expansion of  $\Tr \alpha_j \gamma^\ast_j e^{-t_j \Delta_j}$
	follows from a well-known fact
	(see, e.g., the book of Gilkey \cite{gilkey}),
	and the decay as $t_j \to \infty$ follows from the
	assumption that $F_j$ is acyclic, i.e., that $\Delta_j$ has trivial kernel.
	
	As a consequence, for every $\gamma \in \Gamma$, $h_\gamma(t_1, t_2)$
	is multi-admissible, being a product
	of an admissible function of $t_1$ and an
	admissible function of $t_2$. Summing over $\gamma \in \Gamma$,
	as in \eqref{eq: sum over gamma},
	shows that $\Tr \alpha_\Gamma e^{-\Delta(t_1, t_2)}$ is multi-admissible,
	completing the proof.	
\end{proof}

\subsection{Vanishing of constant terms}
\label{subsection: Vanishing of constant terms}

To study constant terms in certain (multi-)asymptotic expansions,
we first will review some general results,
for which we will introduce the following new notation.
Consider a compact oriented Riemannian manifold $N$ and a flat orthogonal
or unitary vector bundle $V \to N$.
Let $\phi$ be a Riemannian isometry of $N$ that extends to a flat isometry
of $V$.
Let $\phi^\ast$ denote the induced pullback operator on sections of $\Lambda^\bullet V$.
Let $\Delta$ be the Laplacian on sections of $\Lambda^\bullet V$.
Let $\sigma \in C^\infty(N, \End(\Lambda^\bullet V))$.
Retain these assumptions for Lemmas \ref{lemma: codimension of fixed point},
\ref{lemma: orientation-preserving}, and
\ref{lemma: phi t^0 term} and Corollary \ref{corollary: t^0 vanishing}.

\begin{lemma} \label{lemma: codimension of fixed point}
	Let $X_0$ be a submanifold of $N$ that is a connected component of the fixed point set of $\phi: N \to N$.
	Let $c$ be the codimension of $X_0$ in $N$.
	Then   $\phi$ is orientation-preserving
	as a diffeomorphism of $N$ if and only if $c$ is even, and equivalently,
	$\phi$ is orientation-reversing if and only if $c$ is odd.
\end{lemma}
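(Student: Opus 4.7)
The plan is to reduce the question to a linear-algebraic calculation at a single fixed point $p \in X_0$. Because $\phi$ is an isometry that fixes $X_0$ pointwise, the differential $d\phi_p : T_pN \to T_pN$ preserves the orthogonal decomposition $T_pN = T_pX_0 \oplus \nu_p$, where $\nu_p$ is the normal space (of dimension $c$). On $T_pX_0$ it acts as the identity, and on $\nu_p$ it acts as some orthogonal transformation $A \in O(\nu_p) \cong O(c)$. Since the fixed point set of an isometry is a disjoint union of totally geodesic submanifolds whose tangent spaces at $p$ coincide with the $(+1)$-eigenspace of $d\phi_p$, and since $X_0$ is assumed to be an entire connected component of the fixed point set, we obtain $T_pX_0 = \ker(d\phi_p - I)$. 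Equivalently, $A$ has no $+1$ eigenvalue on $\nu_p$.

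The next step is purely linear-algebraic. The real Jordan form of an orthogonal matrix $A \in O(c)$ with no $+1$ eigenvalue consists of a diagonal block of $-1$'s (say $k$ of them) together with two-dimensional rotation blocks $R(\theta) = \bigl(\begin{smallmatrix} \cos\theta & -\sin\theta \\ \sin\theta & \cos\theta \end{smallmatrix}\bigr)$ with $\theta \in (0,2\pi) \setminus \{\pi\}$, which contribute determinant $+1$ each. Writing $c = k + 2m$ where $m$ is the number of rotation blocks, we get $\det A = (-1)^k$ and $c \equiv k \pmod{2}$, hence
\begin{equation*}
\det A = (-1)^c.
\end{equation*}

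Finally, since $d\phi_p$ acts as the identity on $T_pX_0$, one has $\det(d\phi_p) = \det A = (-1)^c$. Thus $\phi$ is orientation-preserving at the fixed point $p$ (hence on the connected component of $N$ containing $X_0$, which we may take to be all of $N$) if and only if $c$ is even, and orientation-reversing if and only if $c$ is odd. This completes the proof.

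The main (and really only) subtle point is the identification $T_pX_0 = \ker(d\phi_p - I)$. The containment $\subseteq$ is immediate, but the reverse containment uses the isometry hypothesis in an essential way: for any $v \in \ker(d\phi_p - I)$, the geodesic $t \mapsto \exp_p(tv)$ satisfies $\phi(\exp_p(tv)) = \exp_{\phi(p)}(d\phi_p \cdot tv) = \exp_p(tv)$, so it lies in the fixed point set and hence, by the assumption that $X_0$ is a component of that set, in $X_0$ for $t$ small, forcing $v \in T_pX_0$.
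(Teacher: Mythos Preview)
Your proof is correct. The paper actually states this lemma without proof, so there is no argument in the paper to compare against. Your approach---reducing to the linearization at a fixed point, using the isometry hypothesis to identify $T_pX_0$ with the $(+1)$-eigenspace of $d\phi_p$ via the geodesic argument, and then computing $\det A = (-1)^c$ for an orthogonal matrix $A\in O(c)$ with no $+1$ eigenvalue---is the standard and expected argument, and every step is justified.
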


\begin{lemma} \label{lemma: orientation-preserving}
	Suppose $\phi$ is orientation-preserving and $\dim N$ is odd.
	Then there is no $t^0$ term in the asymptotic expansion of
	$\Tr \sigma  \phi^\ast  e^{-t \Delta}$.
\end{lemma}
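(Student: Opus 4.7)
The plan is to invoke the standard equivariant heat kernel asymptotic expansion, which localizes to the fixed-point set of $\phi$, and then use Lemma \ref{lemma: codimension of fixed point} to rule out the $t^0$ term by a parity argument.

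First, I would write the trace as an integral involving the heat kernel $k(t; x, y)$ of $\Delta$ on $\Lambda^\bullet V$:
\begin{equation*}
\Tr \sigma \phi^\ast e^{-t\Delta} = \int_N \tr\bigl[\sigma(x)\, \phi^\ast_x\, k(t; \phi(x), x)\bigr] \, \vol_N(x).
\end{equation*}
Since $k(t; x, y)$ decays like $e^{-d(x,y)^2/(4t)}$ as $t \to 0^+$, the integrand is concentrated near the fixed-point set $F(\phi) = \{x \in N : \phi(x) = x\}$. Because $\phi$ is an isometry, $F(\phi)$ is a disjoint union of totally geodesic closed submanifolds $X_0 \subset N$, each of some dimension $d_0 = n - c$, where $c$ is the codimension covered by Lemma \ref{lemma: codimension of fixed point}.

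Next, I would invoke the standard equivariant heat trace asymptotic (a Lefschetz-type expansion due to Donnelly, and available in Gilkey \cite{gilkey}), which states that
\begin{equation*}
\Tr \sigma \phi^\ast e^{-t\Delta} \sim \sum_{X_0 \subset F(\phi)} \, \sum_{k \geq 0} t^{(k - d_0)/2} \int_{X_0} a_k(\sigma, \phi, \Delta) \, \vol_{X_0},
\end{equation*}
where the $a_k$ are locally computable densities on $X_0$ built from $\sigma$, the metric, the flat connection, and $d\phi$ acting on the normal bundle. The crucial point, analogous to the parity constraint $q - j$ even in \eqref{equation: heat trace asymptotics}, is that only integer powers of $t$ (equivalently, even values of $k$) contribute: this follows because the expansion is driven by the Gaussian factor $e^{-|\nu|^2/(4t)}$ on the normal bundle, whose Gaussian moments vanish in odd degree.

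The conclusion is then a direct parity count. A $t^0$ term from the component $X_0$ requires $k = d_0$ with $k$ a nonnegative even integer, which in particular requires $d_0$ to be even. But $\phi$ is orientation-preserving, so by Lemma \ref{lemma: codimension of fixed point} the codimension $c$ is even on every component; combined with the hypothesis that $n$ is odd, this forces $d_0 = n - c$ to be odd on every component. Hence no component contributes a $t^0$ term, and the lemma follows.

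The main obstacle is a matter of bookkeeping rather than genuine difficulty: one must quote (or verify) the equivariant asymptotic expansion above with the correct structure of the exponents and the parity restriction on $k$; everything else is an immediate consequence of Lemma \ref{lemma: codimension of fixed point} together with the oddness of $n$.
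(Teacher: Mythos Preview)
Your argument is correct and is essentially the approach the paper defers to: the paper does not prove this lemma itself but cites the proof of Proposition~2 in Lott--Rothenberg \cite{lott-rothenberg}, which proceeds exactly by localizing the equivariant heat trace to the fixed-point components via the Donnelly/Gilkey expansion and then ruling out the $t^0$ term by the parity of $\dim X_0$. Your use of Lemma~\ref{lemma: codimension of fixed point} to convert ``orientation-preserving'' into ``even codimension,'' hence ``odd-dimensional fixed components'' when $n$ is odd, is precisely the mechanism. One small remark on exposition: the cleaner way to state the expansion is that each component $X_0$ contributes powers $t^{j - d_0/2}$ for nonnegative \emph{integers} $j$ (this is how Donnelly and Gilkey phrase it), so that $t^0$ forces $d_0$ even; your phrasing with $t^{(k-d_0)/2}$ and $k$ even is equivalent, but the justification via ``odd Gaussian moments vanish'' is a heuristic for a fact that really comes from the integer-step structure of the heat parametrix.
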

For a proof of Lemma \ref{lemma: orientation-preserving}, see the proof of Proposition 2 in \cite{lott-rothenberg}.

For a proof of the following result, see, e.g., the book of Gilkey \cite{gilkey}.
It is originally due to Kotake \cite{kotake},
who was the first to apply heat equation methods to
Atiyah-Bott's Lefschetz fixed point theorem \cite{atiyah-bott}.

\begin{lemma} \label{lemma: phi t^0 term}
	Suppose that $\phi$ has nondegenerate fixed points.
	Then $\Tr \sigma  \phi^\ast e^{-t \Delta}$ is bounded,
	and furthermore, as $t \to 0^+$,
	\begin{equation*}
	\Tr \sigma \phi^\ast e^{-t\Delta} = \sum_{ x_0 \in \operatorname{Fix}(\phi)} \frac{\tr \sigma_{x_0} \phi^\ast_{x_0}}{| \det(I - d\phi_{x_0})| } 
	\,+\, O\left( t^{\frac{1}{2}} \right).
	\end{equation*}
	where  $\operatorname{Fix}(\phi) \subset N$ denotes the finite set of fixed points of $\phi$.
\end{lemma}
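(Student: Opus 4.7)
The plan is to compute the trace as a localized integral near the fixed points of $\phi$ and apply a Gaussian (Laplace-type) asymptotic analysis. Let $k(t;y,x) \in \Hom((\Lambda^\bullet V)_x, (\Lambda^\bullet V)_y)$ denote the heat kernel of $\Delta$. Writing the trace in terms of the integral kernel of $\sigma \phi^\ast e^{-t\Delta}$, which has the value $\sigma_x \phi^\ast_x k(t; \phi(x), x)$ on the diagonal, we get
\begin{equation*}
\Tr \sigma \phi^\ast e^{-t\Delta} = \int_N \tr\bigl[ \sigma_x \phi^\ast_x \, k(t; \phi(x), x) \bigr] \, \vol(x).
\end{equation*}
The strategy is to exploit the classical off-diagonal Gaussian bounds for $k(t;y,x)$, namely $|k(t;y,x)| \leq C t^{-n/2} e^{-d(y,x)^2/(ct)}$, to argue that the integrand is exponentially small in $1/t$ outside any fixed neighborhood of the fixed-point set, so the integral localizes near $\operatorname{Fix}(\phi)$.

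Since $\phi$ has nondegenerate fixed points, $\operatorname{Fix}(\phi)$ is a finite set, and for each $x_0 \in \operatorname{Fix}(\phi)$ the map $I - d\phi_{x_0}$ is invertible. The next step is to work in a small geodesic ball around a fixed point $x_0$, using normal coordinates $u \in T_{x_0} N$ centered at $x_0$. There one has the standard on-axis heat-kernel parametrix
\begin{equation*}
k(t; \phi(x), x) = (4\pi t)^{-n/2} e^{-d(\phi(x), x)^2/(4t)} \bigl( \tau(\phi(x), x) + O(t) \bigr),
\end{equation*}
where $\tau$ denotes parallel transport in $\Lambda^\bullet V$ along the short geodesic from $x$ to $\phi(x)$. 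Writing $x = \exp_{x_0}(\sqrt{t}\, u)$ and expanding, $\phi(x) - x$ reads $\sqrt{t}(d\phi_{x_0} - I) u + O(t)$ in normal coordinates, so
\begin{equation*}
\frac{d(\phi(x),x)^2}{4t} \longrightarrow \tfrac{1}{4}\bigl| (I - d\phi_{x_0}) u \bigr|^2 \quad \text{as } t \to 0^+.
\end{equation*}
The nondegeneracy of $I - d\phi_{x_0}$ guarantees the limiting quadratic form is positive definite, so the $u$-integral converges and may be computed explicitly.

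Performing the change of variables $x = \exp_{x_0}(\sqrt{t}\, u)$ contributes a Jacobian factor $t^{n/2}$, which exactly cancels the $(4\pi t)^{-n/2}$ in the heat kernel, leaving a finite Gaussian integral. Evaluating
\begin{equation*}
\int_{T_{x_0} N} e^{-|(I - d\phi_{x_0}) u|^2/4} \, du = \frac{(4\pi)^{n/2}}{|\det(I - d\phi_{x_0})|},
\end{equation*}
and using that the fiber endomorphism $\sigma_x \phi^\ast_x \tau(\phi(x), x)$ tends to $\sigma_{x_0} \phi^\ast_{x_0}$ as $t \to 0$, one obtains the leading contribution $\tr(\sigma_{x_0}\phi^\ast_{x_0})/|\det(I - d\phi_{x_0})|$ from each fixed point. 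Summing over $x_0 \in \operatorname{Fix}(\phi)$ gives the claimed main term. The $O(t^{1/2})$ remainder arises from the subleading terms in the Minakshisundaram-Pleijel parametrix expansion, together with the Taylor remainder in $\phi$ and in the volume density; only even powers of $\sqrt{t}$ appear in the parametrix but odd powers arise from the Taylor expansion of $\phi$ about $x_0$, giving the $t^{1/2}$ order in general.

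The main obstacle is the careful bookkeeping in the Laplace-type asymptotic expansion: one must verify that the Gaussian approximation to the heat kernel is valid uniformly in a shrinking neighborhood of each fixed point, that the off-diagonal tail contributes $O(t^\infty)$, and that the subleading terms truly enter at order $t^{1/2}$. These are standard but delicate; we would invoke the classical Minakshisundaram-Pleijel off-diagonal parametrix and Gaussian heat-kernel bounds (referenced in Gilkey \cite{gilkey}) rather than reproduce the construction, since Kotake's computation \cite{kotake} gives precisely this expansion in the context of the Atiyah-Bott Lefschetz formula \cite{atiyah-bott}.
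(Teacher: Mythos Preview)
The paper does not give its own proof of this lemma: it simply states the result and refers the reader to Gilkey \cite{gilkey} and to Kotake's original argument \cite{kotake}. Your sketch is essentially the standard Laplace-method proof found in those references---localize via off-diagonal Gaussian bounds, apply the Minakshisundaram--Pleijel parametrix in normal coordinates about each fixed point, and evaluate the resulting Gaussian integral to produce the factor $|\det(I-d\phi_{x_0})|^{-1}$---so there is nothing to compare: you have supplied exactly the argument the paper is outsourcing.
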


We will need the following vanishing result in the case in which the operator $\sigma$
takes a certain special form:

\begin{coro} \label{corollary: t^0 vanishing}
	Suppose that $\phi$ has nondegenerate fixed points.
	Let $g(u)$ be a smooth one-parameter family of $\phi$-invariant Riemannian metrics on $N$,
	with associated Hodge star $\ast = \ast(u)$.
	Then
	$\Tr (-1)^Q \ast^{-1} \frac{d \ast}{du}  \phi^\ast e^{-t\Delta^{g(u)}}$ is $O(t^{1/2})$ as $t \to 0$.
\end{coro}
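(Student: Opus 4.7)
The plan is to apply Lemma \ref{lemma: phi t^0 term} with the endomorphism-valued field $\sigma := (-1)^Q \ast^{-1}(d\ast/du)$ (which indeed lies in $C^\infty(N, \End(\Lambda^\bullet V))$, since on $\Lambda^q$ the factor $d\ast/du$ maps $\Lambda^q \to \Lambda^{n-q}$ and $\ast^{-1}$ maps $\Lambda^{n-q} \to \Lambda^q$, so $\sigma$ preserves degree), and show that the leading $t^0$ coefficient
\begin{equation*}
\sum_{x_0 \in \operatorname{Fix}(\phi)} \frac{\tr[\sigma_{x_0} \phi^\ast_{x_0}]}{|\det(I - d\phi_{x_0})|}
\end{equation*}
actually vanishes term by term. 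The corollary then follows with the claimed $O(t^{1/2})$ bound.

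Fix a nondegenerate fixed point $x_0$, and write $\ast_q := \ast(u)|_{\Lambda^q_{x_0}}$ and $T_q := \ast_q^{-1}(d\ast/du)|_{\Lambda^q_{x_0}} \in \End(\Lambda^q_{x_0})$. Two algebraic identities drive the argument. First, differentiating the pointwise relation $\ast_q \ast_{n-q} = (-1)^{q(n-q)} I$ in $u$ and using $\ast_q^{-1} = (-1)^{q(n-q)} \ast_{n-q}$ yields the conjugation formula
\begin{equation*}
T_{n-q} \;=\; -\,\ast_q\, T_q\, \ast_q^{-1}.
\end{equation*}
Second, $\phi$-invariance of $g(u)$ makes $d\phi_{x_0}$ a linear isometry, so $\phi^\ast$ commutes with $\ast$ up to the orientation sign: $\phi^\ast \ast = \epsilon \ast \phi^\ast$, with $\epsilon \in \{\pm 1\}$ according to whether $\phi$ is orientation-preserving or -reversing near $x_0$. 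Combining these two identities with the cyclicity of the trace on the finite-dimensional fiber $(\Lambda^\bullet V)_{x_0}$, and reindexing the sum $q \mapsto n - q$, one obtains
\begin{equation*}
\tau \;:=\; \tr[(-1)^Q T \phi^\ast]_{x_0} \;=\; -\,\epsilon(-1)^n\, \tau.
\end{equation*}

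The final step uses the nondegeneracy hypothesis to pin down $\epsilon$. Since $I - d\phi_{x_0}$ is invertible, $x_0$ is isolated in $\operatorname{Fix}(\phi)$, so the component of the fixed-point set containing $x_0$ has codimension $c = n$; Lemma \ref{lemma: codimension of fixed point} then forces $\phi$ to be orientation-preserving near $x_0$ exactly when $n$ is even, i.e., $\epsilon = (-1)^n$. Hence $\epsilon(-1)^n = 1$, the displayed equation becomes $\tau = -\tau$, and $\tau = 0$ as required. I expect the main bookkeeping obstacle to be the derivation of the conjugation formula $T_{n-q} = -\ast_q T_q \ast_q^{-1}$ and the careful tracking of how $\ast_q^{-1}$ moves across $\phi^\ast$ under the cyclic trace; once these signs are correct, the match $\epsilon = (-1)^n$ supplied by Lemma \ref{lemma: codimension of fixed point} delivers the pointwise vanishing.
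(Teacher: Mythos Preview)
Your proposal is correct and follows essentially the same route as the paper: both reduce via Lemma~\ref{lemma: phi t^0 term} to the pointwise vanishing of $\tr[(-1)^Q \ast^{-1}(d\ast/du)\,\phi^\ast]_{x_0}$, then obtain $\tau=-\epsilon(-1)^n\tau$ by conjugating with $\ast$ (your degree-by-degree identity $T_{n-q}=-\ast_q T_q\ast_q^{-1}$ is exactly the paper's relation $\ast\,\frac{d\ast}{du}=-\frac{d\ast}{du}\,\ast$), and finish using Lemma~\ref{lemma: codimension of fixed point} to identify $\epsilon=(-1)^n$.
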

\begin{proof}
	We will show that for each fixed point $x_0$,
	\begin{equation} \label{equation: suffices to show vanishing} 
	\tr (-1)^Q \ast_{x_0}^{-1} \frac{d \ast_{x_0}}{du}  \phi^\ast_{x_0} = 0 ,
	\end{equation}
	which suffices to prove the claim by Lemma \ref{lemma: phi t^0 term}.
	Our approach is to conjugate the operator on the left-hand side of \eqref{equation: suffices to show vanishing}
	by $\ast_{x_0}$.
	Note that $\ast (-1)^Q = (-1)^{\dim N}(-1)^Q \ast$,
	$\ast \frac{d \ast}{du} = - \frac{d \ast}{du} \ast$ (since $\ast$ squares to a constant),
	and $\ast \phi^\ast  = \epsilon_{\text{or.}} \phi^\ast \ast$ where $\epsilon_{\text{or.}} = 1$ if $\phi$ is orientation-preserving 
	and $\epsilon_{\text{or.}}=-1$ if $\phi$
	is orientation-reversing.
	But since $\phi$ has nondegenerate fixed points, $\phi$ is orientation-preserving if and only if $\dim N$ is even,
	which means $\epsilon_{\text{or.}} = (-1)^{\dim N}$.
	The cyclicity of the trace and the facts above show that
	\begin{align*}
	\tr (-1)^Q \ast_{x_0}^{-1} \frac{d \ast_{x_0}}{du}  \phi^\ast_{x_0} &= \tr \ast_{x_0} (-1)^Q \ast_{x_0}^{-1} \frac{d \ast_{x_0}}{du}  \phi^\ast_{x_0} \ast_{x_0}^{-1} \\
	&= - \tr (-1)^Q \ast_{x_0}^{-1} \frac{d \ast_{x_0}}{du}  \phi^\ast_{x_0},
	\end{align*}
	which proves \eqref{equation: suffices to show vanishing} and completes the proof.
\end{proof}

Now we return to the setting of Theorem \ref{theorem: heat kernel is multi-admissible}.
The following results
will be essential to our proof of the metric independence theorem
for multi-torsion.

\begin{prop} \label{prop: no t^0 terms}
	Retain the assumptions of Theorem \ref{theorem: heat kernel is multi-admissible}.
	Suppose the dimension $n_1$ is odd and
	for every $\gamma_1 \in \Gamma_1 $,
	$\gamma_1$ is either orientation-preserving or has nondegenerate fixed points as a diffeomorphism of $M_1$.
	Suppose $\alpha_1$ is of the special form $\alpha_1 = (-1)^{Q_1} \ast_1^{-1} \frac{d \ast_1}{du}$
	for a smooth one-parameter family of metrics $h_1(u)$ on $M_1$.
	Then there are no $(t_1)^0$ terms in the multi-asymptotic expansions of \ref{first}-\ref{third}
	for the multi-admissible function
	$\Tr  \alpha_\Gamma e^{-\Delta(t_1, t_2)}$.
	A similar statement holds with the roles of $1$ and $2$ reversed.
\end{prop}
\begin{proof}
	By \eqref{eq: sum over gamma} and the discussion following it,
	it suffices to prove the claim that for each $\gamma_1 \in \Gamma_1$,
	$\Tr (-1)^{Q_1} \alpha_1 \gamma^\ast_1 e^{-t_1 \Delta_1}$
	has no $(t_1)^0$ term in its small $t_1$ asymptotic expansion. There are two cases:
	
	First, if $\gamma_1$ is orientation-preserving, then the claim follows from Lemma \ref{lemma: orientation-preserving} since $n_1$ is odd.
	
	Second, if $\gamma_1$ has nondegenerate fixed points, then the claim follows from Corollary \ref{corollary: t^0 vanishing}.
\end{proof}

\begin{coro} \label{coro: partial derivative vanish}
	Under the assumptions of Proposition \ref{prop: no t^0 terms}, we have
	\begin{equation*}
	\left. \frac{\partial}{\partial s_2} \right|_{(s_1, s_2)=(0,0)}   \zeta \left( s_1, s_2; -\Delta(t_1, t_2, u),  \alpha_\Gamma \right) = 0.
	\end{equation*}
	A similar statement holds with the roles of $1$ and $2$ reversed.
\end{coro}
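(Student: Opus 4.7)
The plan is to deduce this corollary as an essentially immediate consequence of the two preceding results: Proposition \ref{prop: no t^0 terms} supplies the required vanishing of coefficients in the heat trace's multi-asymptotic expansion, and Lemma \ref{lemma: multi-zeta properties} converts that vanishing of heat coefficients into the vanishing of the partial derivative of the associated multi-zeta function at the origin.

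More precisely, I would first recall that the multi-zeta function $\zeta(s_1, s_2; -\Delta(t_1, t_2, u), \alpha_\Gamma)$ is defined, as in \eqref{def of heat multi-zeta}, as the Mellin-type transform of the multi-admissible function $h(t_1, t_2) := \Tr \alpha_\Gamma e^{-\Delta(t_1, t_2)}$ (multi-admissibility being the content of Theorem \ref{theorem: heat kernel is multi-admissible}, which in particular applies here since the operator $\alpha_1 = (-1)^{Q_1} \ast_1^{-1} \frac{d\ast_1}{du}$ commutes with $\Gamma_1$ by $\Gamma_1$-invariance of the family $h_1(u)$). Next, I would invoke Proposition \ref{prop: no t^0 terms} under its stated hypotheses to conclude that $h(t_1, t_2)$ has no $(t_1)^0$ term in either the ``both $t_1, t_2$ small'' expansion \ref{first} or the ``$t_1$ small, $t_2$ large'' expansion \ref{second}; in the notation of \S\ref{subsection: Multi-zeta functions}, this means the coefficients $a_{0, p^2_{j_2}}$ and $c^1_0(t_2)$ all vanish.

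Finally, I would appeal to the first vanishing statement in Lemma \ref{lemma: multi-zeta properties}, which is exactly the implication that if no $(t_1)^0$ terms appear in \ref{first} or \ref{second}, then $\frac{\partial \zeta}{\partial s_2}(0,0) = 0$. Applying this to our $h(t_1, t_2)$ yields the desired identity. The analogous statement with the roles of $1$ and $2$ reversed follows by the same argument, using the symmetric version of Proposition \ref{prop: no t^0 terms} together with the second vanishing statement in Lemma \ref{lemma: multi-zeta properties}.

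There is essentially no obstacle here beyond verifying that the hypotheses line up: the main substantive work has already been done in Proposition \ref{prop: no t^0 terms} (which is where the dimension parity and fixed-point hypotheses are used, via Lemmas \ref{lemma: orientation-preserving} and \ref{lemma: phi t^0 term} and Corollary \ref{corollary: t^0 vanishing}), and in Lemma \ref{lemma: multi-zeta properties} (which is where the structure of the poles of the Mellin transform at the origin, together with the $\frac{1}{\Gamma(s_1)\Gamma(s_2)}$ factor, translates the absence of a $(t_1)^0$ heat coefficient into the vanishing of $\partial_{s_2} \zeta$ at $(0,0)$). So the proof should be just a few lines, essentially a citation.
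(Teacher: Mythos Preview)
Your proposal is correct and matches the paper's own proof, which is literally the one-line citation ``This follows from Proposition \ref{prop: no t^0 terms} and Lemma \ref{lemma: multi-zeta properties}.'' Your expanded explanation is accurate; the only minor omission is that the $(t_1)^0$ terms in \ref{first} also include the functions $b^1_0(t_2)$ in addition to the $a_{0,p^2_{j_2}}$, but Proposition \ref{prop: no t^0 terms} covers those as well, so this does not affect the argument.
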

\begin{proof}
	This follows from Proposition \ref{prop: no t^0 terms} and Lemma \ref{lemma: multi-zeta properties}.
\end{proof}


\section{Metric independence theorem for multi-torsion}

\label{sec: Metric independence theorem for multi-torsion}

\subsection{A closed form for multi-torsion}
\label{subsection: the multi form}

We will now introduce the closed form $\omega_{MT}$
and interpret multi-torsion as the integral of $\omega_{MT}$ over a surface in the space of metrics.

The form $\omega_{MT}$ makes sense in a somewhat more general setting than
that in which we defined multi-torsion.
What is essential is that there is a sufficiently nice decomposition of $d$, $d^\ast$, $\Delta$, and $\delta^\M$.
More precisely,
we make the following assumptions.

Let $M$ be a compact manifold of dimension $n$.
Let $(E, d)$ be an elliptic complex on $M$, and
let $(-1)^Q$ denote the grading operator.
We assume that $d$ admits a decomposition
\begin{equation*}
d = d_1 + \dots + d_m
\end{equation*}
such that for all $i$ and $j$, $d_i$ and $d_j$ anticommute
and $d_i$ anticommutes with $(-1)^Q$.
(Here, the subscript $j$ on $d_j$ does \textit{not}
refer to the $\mathbb{Z}$-grading on $E$.)
This implies that for any metric on $E$, we have a decomposition of the associated $d^\ast$:
\begin{equation*}
d^\ast = d^\ast_1 + \dots + d^\ast_m.
\end{equation*}

Since we are allowing the density to vary in this section,
we will now consider a ``metric" on $E$
to be a choice of both a metric in the usual sense and a density,
i.e., a metric is a section of $E^\ast \otimes E^\ast \otimes |\Lambda|$,
where $|\Lambda|$ denotes the density bundle,
a trivial line bundle on $M$.
We may view a metric $h$ as an isomorphism $h: E \to E^\ast \otimes |\Lambda|$.

We will assume that $\M$ is a space of metrics on $E$
such that for any metric in $\M$, the associated $d^\ast_i$'s satisfy that
for all $i$ and $j$, $d^\ast_i$ and $d^\star_j$ anticommute
and that if $i \neq j$,
$d^\star_i$ and $d_j$ anticommute.
Then the Laplacian decomposes as $\Delta = \sum_{k=1}^m \Delta_k$,
where $\Delta_k := d_k d_k^\ast + d_k^\ast d_k$;
$d_k$ and $d_k^\ast$ commute with $\Delta$ and therefore with functions of $\Delta$,
in particular  with the resolvent $R = R_z := (z - \Delta)^{-1}$.

Furthermore, we will require that the $\M$-exterior derivative $\delta^\M$ decomposes as
\begin{equation*}
\delta^\M = \delta^\M_1 + \cdots + \delta^\M_m
\end{equation*}
such that $\delta^\M_i d^\ast_j = 0$ for $i \neq j$.
Note that the one-form $b = h^{-1} \delta^\M h$
introduced in \S \ref{subsection: the one-form}
still makes sense.
Our assumptions imply that $b$ decomposes as
\begin{equation*}
b = b_1 + \dots + b_m,
\end{equation*}
where $b_j := h^{-1} \delta^\M_j h$.
Lemma \ref{lemma: derivative of d star and Delta} generalizes
to give that $\delta^\M d^\ast_j = \delta^\M_j d^\ast_j = [d^\ast_j, b_j]$.
We will assume that $b_j$ commutes with $d_i$ and $d^\ast_i$ for $i \neq j$,
and that each $b_j$ is symmetric.
Thus we assume that a generalization of Lemma \ref{lemma: b is symmetric} holds.
In the geometric setting of \S \ref{subsection: The geometry of finite quotients of product manifolds},
this fact holds automatically for the same reason that Lemma \ref{lemma: b is symmetric} holds.

\begin{remark} \label{remark: hodge star}
	The geometric setting of \S \ref{subsection: The geometry of finite quotients of product manifolds}
	satisfies the assumptions above, with $m = 2$ and with $\M = \Mprod$.
	In that setting,
	we may rewrite $b$ using the Hodge star,
	as we will now explain.
	For Riemannian metrics $h_1$ on $M_1$ and $h_2$ on $M_2$,
	let $\ast_1$ and $\ast_2$ be the associated Hodge stars on $\Lambda^\bullet F_1$
	and $\Lambda^\bullet F_2$, respectively.
	If $\Gamma_1$ and $\Gamma_2$ act by orientation-preserving diffeomorphisms,
	then $\ast_1$ and $\ast_2$ descend to well-defined operators on $\Lambda^\bullet F_\Gamma$,
	but even if not,
	the $C^\infty(M_\Gamma, \End(\Lambda^\bullet F_\Gamma))$-valued
	one-forms $\ast_1^{-1} (\delta^\M_1 \ast_1)$ and $\ast_2^{-1} (\delta^\M_1 \ast_2)$
	are well-defined (since $\ast_1$ and $\ast_2$
	are well-defined up to a sign),
	and we may write $b = b_1 + b_2$, where
	$b_1 = \ast_1^{-1} (\delta^\M_1 \ast_1)$ and $b_2 = \ast_2^{-1} (\delta^\M_1 \ast_2)$.
\end{remark}

Defining $\omega_{MT}$ and $\tilde{\omega}_{MT}$,
which are $m$-forms that generalize the one-forms $\omega_T$ and $\tilde{\omega}_T$, respectively,
will require  introducing some notation.
Fix an integer $N$ that is sufficiently
large to ensure  that $R_z^{N}$ is trace-class.
Let $\C_{N,m}$ denote the finite set of compositions (i.e., ordered partitions) of the positive integer $N$ into $m$ positive integer summands.
We will view $\C_{N,m}$ as the set of multi-indices $I = (i_1, \dots, i_m)$
such that each $i_j$ is a strictly positive integer and $N = i_1 + \cdots + i_m$.
We will view $S_{m-1}$ as the permutation group of the set $\{2, \dots, m \}$.
For $\sigma \in S_{m-1}$, $I  = (i_1, \dots, i_m) \in \C_{N,m}$, and $z$ a complex number not in the spectrum of $\Delta$,
let $\R_{z, \sigma, I}$ be defined by
\begin{align*}
\R_{z, \sigma, I}  :=  R_z^{i_1} b_{1} \, R_z^{i_2} b_{\sigma(2)} \dotsm R_z^{i_m} b_{\sigma(m)}.
\end{align*}
Let $T_{\sigma, I}$ and $\conj{T}_{\sigma, I}$ be defined by
\begin{align*}
T_{\sigma, I} &:= \sign \sigma ~ \Tr (-1)^Q \R_{z,\sigma, I} \text{ and} \\
\conj{T}_{ \sigma, I} &:= \sign \sigma ~ \Trconj (-1)^Q  \R_{\conj{z},\sigma, I}.
\end{align*}
$T_{\sigma, I}$ and $\conj{T}_{\sigma, I}$ depend on $z$, but we suppress this.
Note that $T_{\sigma, I}$ is {\em not} necessarily equal to the complex conjugate of $\conj{T}_{\sigma, I}$.

Our $m$-form $\tilde{\omega}_{MT}$ is defined by
\begin{equation}  \label{eq: defn of tilde omega MT}
\tilde{\omega}_{MT} := \frac{1}{ |\C_{N,m}| (m-1)!} \sum_{\sigma \in S_{m-1}, I \in \C_{N, m}} \frac{1}{2} \left( T_{\sigma,I} + \conj{T}_{\sigma,I} \right).
\end{equation}
The normalizing constant $|\C_{N,m}| (m-1)!$ is the number of summands in the sum.
We have symmetrized over all orderings of $2, \dots, m$;
we could have instead chosen to symmetrize over all orderings of $1, \dots, m$,
which would have been equivalent by the graded cyclicity of the trace.
Again, we suppress the dependence on $z$ (and on $N$).

\begin{remark} \label{remark: m=1}
 Consider the special case when $m=1$. Then the sum in \eqref{eq: defn of tilde omega MT}
 is trivial, and we have
 \begin{equation*} 
 \tilde{\omega}_{MT} = \frac{1}{2} \left(  \Tr (-1)^Q  R_z^N b + \Trconj (-1)^Q R_{\conj{z}}^N b \right),
 \end{equation*}
 where we have set $b := b_1$.
 An application of the trace-adjoint identity (Lemma \ref{lemma: adjoint}) shows that
 this expression equals the torsion one-form
 $\tilde{\omega}_T$ as defined in \eqref{eq: def of omega T tilde}.
 Thus our results for $\tilde{\omega}_{MT}$
 apply in particular to $\tilde{\omega}_T$.
\end{remark}

We define $\omega_{MT}$ in terms of $\tilde{\omega}_{MT}$ via the  contour integral
\begin{equation} \label{eq: defn of omega MT}
\omega_{MT} := \frac{1}{(N-1)!} \frac{1}{2\pi i} \int_C  e^{-z}\,  \tilde{\omega}_{MT} \, dz ,
\end{equation}
where $C$ is a contour in $\mathbb{C}$ enclosing $[0,\infty)$.
\begin{remark}
	There is not an obvious simpler formula for $\omega_{MT}$ involving the heat operator
	(analogous to \eqref{eq: def of omega T})
	because $b_1, \dots, b_m$ do not commute with the resolvent $R_z$ in general.
	But in the most important special case,
	the contour integral in \eqref{eq: defn of omega MT}
	is simple to compute, giving a heat operator;
	see \eqref{equation: omega MT on sigma} below.
\end{remark}

We will now explain the significance of $\omega_{MT}$ to the multi-torsion.
Let us return to the setting of \S \ref{subsection: The geometry of finite quotients of product manifolds}, in which we have a bigrading of $E$: $E = \bigoplus_{q_1, q_2} E^{q_1, q_2}$.
Let $h_1 \times h_2 $ be a product Riemannian metric on $M_1 \times M_2$.
Let $h^{q_1, q_2}$ be the induced metric on $E^{q_1, q_2}$ and let $\Delta^h = \Delta_1^h + \Delta_2^h$ be the associated Laplacian.
For $t_1, t_2 > 0$, consider the product Riemannian metric
$\left( \frac{1}{t_1} h_1 \right) \times \left( \frac{1}{t_2} h_2 \right)$,
which induces a metric $h_{t_1, t_2}$ on $E$ whose restriction to
$E^{q_1, q_2}$ is $t_1^{q_1 - n_1/2} t_2^{q_2 - n_2/2} h^{q_1, q_2}$.
(Note that the factors of $t_1^{- n_1/2}$ and  $t_2^{- n_2/2}$
are from the dependence of the respective Riemannian densities on $t_1$ and $t_2$,
which we did not consider in Remark \ref{remark: scale Riemannian metric}
since we had fixed a density then.)
\label{h_t_1_t_2}
Then for $j=1, 2$, we have, on the surface $\Sigma_{h_1, h_2}$ in $\Mprod$ parametrized by $(t_1, t_2) \mapsto h_{t_1, t_2}$:
\begin{align*}
b_j = \left( q_j - \frac{n_j}{2} \right) \frac{dt_j}{t_j};  ~
d^\ast_j(t_j) = t_j d_j^\ast(h); ~
\Delta(t_1, t_2) = t_1 \Delta_1^h + t_2 \Delta_2^h.
\end{align*}
Pulled back to $\Sigma_{h_1, h_2}$, every summand in the sum in \eqref{eq: defn of tilde omega MT}
is the same; i.e., we have for every $\sigma, I$ that
\begin{equation*}
\frac{1}{2} \left( T_{\sigma,I} + \conj{T}_{\sigma,I} \right) = \Tr (-1)^Q \left(Q_1 - \frac{n_1}{2} I \right) \left(Q_2 - \frac{n_2}{2} I \right) (t_1 \Delta_1^h + t_2 \Delta_2^h - z)^{-N} \, \frac{dt_1}{t_1} \frac{dt_2}{t_2},
\end{equation*}
which follows from the graded cyclicity of the trace, the adjoint-trace relation of Lemma \ref{lemma: adjoint}, and the fact that $Q_1$ and $Q_2$ commute with the resolvent $R_z$.
Thus the $2$-form $\tilde{\omega}_{MT}$ pulled back to $\Sigma_{h_1, h_2}$ is
\begin{equation*}
\tilde{\omega}_{MT} = \Tr (-1)^Q   \left(Q_1 - \frac{n_1}{2} I \right) \left(Q_2 - \frac{n_2}{2} I \right)  (t_1 \Delta_1^h + t_2 \Delta_2^h - z)^{-N} \,  \frac{dt_1}{t_1} \frac{dt_2}{t_2}.
\end{equation*}
Performing the contour integral gives in turn that $\omega_{MT}$ pulled back to $\Sigma_{h_1, h_2}$ is
\begin{equation*}
\omega_{MT} = \Tr (-1)^Q   \left(Q_1 - \frac{n_1}{2} I \right) \left(Q_2 - \frac{n_2}{2} I \right) e^{-(t_1 \Delta_1^h + t_2 \Delta_2^h)}  \frac{dt_1}{t_1} \frac{dt_2}{t_2} .
\end{equation*}
Expanding $ \left(Q_1 - \frac{n_1}{2} I \right) \left(Q_2 - \frac{n_2}{2} I \right)$
gives four terms, three of which
vanish by the argument in the proof of Theorem
\ref{theorem: even vanishing of MT}
or by a similar argument,
so $\omega_{MT}$ simplifies to
\begin{equation} \label{equation: omega MT on sigma}
\omega_{MT} = \Tr (-1)^Q  Q_1 Q_2 e^{-(t_1 \Delta_1^h + t_2 \Delta_2^h)}  \frac{dt_1}{t_1} \frac{dt_2}{t_2} .
\end{equation}
Thus we have proven the following generalization of Lemma \ref{lemma: zeta as integral of omega}:
\begin{lemma} \label{lemma: multi zeta as integral of omega}
	For $\operatorname{Re} s_1, \operatorname{Re} s_2 $ large,
	\begin{equation*}
	\zeta(s_1, s_2;  \Delta^h(t_1, t_2), (-1)^Q Q_1 Q_2) =  \frac{1}{\Gamma(s_1)\Gamma(s_2)} \int_{\Sigma_{h_1, h_2}} t_1^{s_1} t_2^{s_2} \omega_{MT}.
	\end{equation*}
\end{lemma}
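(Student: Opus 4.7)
The plan is to observe that this lemma is essentially immediate from two ingredients that the author has already assembled: the explicit pullback formula \eqref{equation: omega MT on sigma} for $\omega_{MT}$ on the surface $\Sigma_{h_1, h_2}$, and the integral representation \eqref{equation: multi-torsion zeta function} of the multi-zeta function. No further nontrivial computation is required; the bulk of the work is in the preceding pages.

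First I would parametrize $\Sigma_{h_1, h_2}$ by $(t_1, t_2) \in (0, \infty) \times (0, \infty)$ via $(t_1, t_2) \mapsto h_{t_1, t_2}$, giving it the orientation induced by the ordering $(dt_1, dt_2)$. By \eqref{equation: omega MT on sigma}, the pullback of $t_1^{s_1} t_2^{s_2} \omega_{MT}$ to this parametrized surface is
\begin{equation*}
t_1^{s_1} t_2^{s_2}  \, \Tr (-1)^Q Q_1 Q_2 \, e^{-(t_1 \Delta_1^h + t_2 \Delta_2^h)} \, \frac{dt_1}{t_1} \frac{dt_2}{t_2}.
\end{equation*}
Integrating this 2-form over $\Sigma_{h_1, h_2}$ produces the iterated integral
\begin{equation*}
\int_0^\infty \int_0^\infty t_1^{s_1} t_2^{s_2} \, \Tr (-1)^Q Q_1 Q_2 \, e^{-\Delta^h(t_1, t_2)} \, \frac{dt_1}{t_1} \frac{dt_2}{t_2},
\end{equation*}
which, after dividing by $\Gamma(s_1)\Gamma(s_2)$, matches the definition \eqref{equation: multi-torsion zeta function} of the multi-zeta function verbatim.

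Finally I would justify that the iterated integral converges absolutely (so that its interpretation as a surface integral is unambiguous and Fubini applies) for $\Real s_1, \Real s_2$ sufficiently large. This is exactly the statement that the domain of convergence from Lemma \ref{lemma: multi-zeta properties} contains $\{\Real s_1 > n_1/2, \Real s_2 > n_2/2\}$, which in turn uses the multi-admissibility of $\Tr(-1)^Q Q_1 Q_2 e^{-\Delta(t_1,t_2)}$ established in Theorem \ref{theorem: heat kernel is multi-admissible}. There is no real obstacle in this proof: the analytic content (the multi-admissibility, and the cancellation of the three unwanted cross terms in $(Q_1 - \tfrac{n_1}{2}I)(Q_2 - \tfrac{n_2}{2}I)$ via Proposition \ref{prop: even vanishing of MT}-type arguments) has already been handled before the lemma statement.
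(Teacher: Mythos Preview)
Your proposal is correct and matches the paper's approach exactly: the paper's proof is precisely the discussion preceding the lemma statement (culminating in \eqref{equation: omega MT on sigma}), after which the lemma follows immediately from the integral representation \eqref{equation: multi-torsion zeta function}. You have correctly identified that all the analytic content lies in the preceding computations.
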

Using that $\frac{1}{\Gamma(s)} = s + O(s^2)$ as $s \to 0$ and Lemma \ref{lemma: multi zeta as integral of omega},
and recalling Definition \ref{definition: multi-torsion} of the multi-torsion $MT(h_1, h_2)$,
we have shown that
\begin{equation*}
4\, MT(h_1, h_2) =  \left. \left( \int_{\Sigma_{h_1, h_2}} t_1^{s_1} t_2^{s_2} \omega_{MT} \right) \right|^{\text{AC}}_{(s_1, s_2)=(0,0)},
\end{equation*}
where the superscript $\text{AC}$ indicates that we must
analytically continue the function in parentheses to the origin.
Formally, setting $s_1 = s_2=0$ on the right-hand side leaves $\int_{\Sigma_{h_1, h_2}} \omega_{MT}$;
this is purely formal because the regularization is necessary for the integral to converge.
This suggests the following heuristic interpretation of multi-torsion, generalizing our interpretation of analytic torsion:
\begin{displayquote}
	The multi-torsion $MT(h_1, h_2)$ may be interpreted as a regularized integral of $\frac{1}{4} \omega_{MT}$ over the surface $\Sigma_{h_1, h_2}$ in the space of metrics $\Mprod$.
\end{displayquote}

\subsection{\texorpdfstring{Proof that $\omega_{MT}$ is closed}{}}
\label{subsection: Proof that multi form is closed}

We will now prove the following via several lemmas:
\begin{theorem} \label{theorem: multi closed}
	The $m$-forms $\tilde{\omega}_{MT}$ and $\omega_{MT}$ are closed on $\M$.
\end{theorem}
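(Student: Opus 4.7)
Since $\delta^\M$ commutes with the contour integral in \eqref{eq: defn of omega MT}, closedness of $\omega_{MT}$ reduces at once to closedness of $\tilde{\omega}_{MT}$. Henceforth I focus on $\tilde{\omega}_{MT}$, following the template of the proof of Theorem \ref{theorem: closed} in \S \ref{subsection: proof of closedness}, suitably symmetrized.

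Applying the graded Leibniz rule to a typical summand $T_{\sigma,I} = \sign\sigma \cdot \Tr(-1)^Q R_z^{i_1} b_1 R_z^{i_2} b_{\sigma(2)} \cdots R_z^{i_m} b_{\sigma(m)}$, I will collect two types of contributions: (R) those in which $\delta^\M$ lands on a resolvent factor $R_z^{i_k}$, producing $\sum_{j=1}^{i_k} R_z^j (\delta^\M \Delta) R_z^{i_k-j+1}$ in the $k$-th slot; and (B) those in which $\delta^\M$ lands on a factor $b_{\sigma(k)}$. Under the standing assumptions, the generalization of Lemma \ref{lemma: derivative of d star and Delta} gives $\delta^\M \Delta = \sum_{\ell=1}^m \{d_\ell, [d_\ell^\ast, b_\ell]\}$, since $\delta^\M_i d_j^\ast = 0$ for $i \neq j$. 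Likewise $\delta^\M b_j = -b\, b_j = -\sum_{i=1}^m b_i b_j$, proved exactly as in the one-form case.

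The key cancellation is the multi-form analogue of Lemma \ref{move delta}. For an (R)-term containing the insertion $d_\ell d_\ell^\ast b_\ell - d_\ell b_\ell d_\ell^\ast + d_\ell^\ast b_\ell d_\ell - b_\ell d_\ell^\ast d_\ell$, I will use that $d_\ell$ and $d_\ell^\ast$ commute with every $R_z$, commute with every $b_k$ for $k\neq \ell$, anticommute with $(-1)^Q$, and anticommute with $b_\ell$ as one-forms of odd total degree only via the Leibniz signs; combined with the graded cyclicity of the trace (Lemma \ref{lemma:adjoint}), this lets me slide a $d_\ell$ or $d_\ell^\ast$ past the inserted $b_\ell$ and all intermediate factors, pairing each (R)-term with a matching (B)-term. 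The symmetrization over $\sigma \in S_{m-1}$ with the sign weight $\sign \sigma$, together with the sum over all compositions $I \in \C_{N,m}$, is precisely what is needed for these pairings to match up globally: moving a $d_\ell$ past a $b_k$ effectively transposes two of the $b$'s, which is absorbed by the sign $\sign\sigma$. Adding $\conj{T}_{\sigma,I}$ to $T_{\sigma,I}$ lets me, as in Lemma \ref{lemma: rewrite omega tilde with conjugate}, use the symmetry $b_j^\ast = b_j$ together with $\Tr = \Trconj\, (\cdot)^\ast$ to handle the $b_j b_{\sigma(k)}$ terms with $j$ on the ``wrong side'' of $b_{\sigma(k)}$, exactly parallel to the treatment of $\Tr (-1)^Q R_z^N bb$ in the one-form case.

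The principal obstacle is purely combinatorial: verifying that all the $(-1)$-signs produced by (a) the signed Leibniz rule on the product of one-forms $b_j$, (b) the graded cyclicity of the trace, (c) the anticommutation of $d_\ell, d_\ell^\ast$ with $(-1)^Q$, and (d) the factor $\sign \sigma$ conspire to make the pairing between (R)-terms and (B)-terms cancel rather than double up. The conceptual mechanism is identical to the one-form proof; once the bookkeeping is carried out on a single representative term and the symmetrization is applied, the vanishing of $\delta^\M \tilde{\omega}_{MT}$ follows.
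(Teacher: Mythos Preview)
Your plan has a genuine gap in the cancellation mechanism. You propose to pair each (R)-term (from differentiating a resolvent block) with a matching (B)-term (from differentiating a $b_j$). But an (R)-term carries $N+1$ total resolvent factors, while a (B)-term carries only $N$; sliding $d_\ell$ or $d_\ell^\ast$ around the trace cannot change the resolvent count, so these two families simply do not match. Moreover, $d_\ell$ does \emph{not} commute with the factor $b_\ell$ already present in the product, so your ``slide $d_\ell$ past all intermediate factors'' step breaks down exactly where it matters. Relatedly, your formula $\delta^\M b_j = -b\,b_j$ is not justified; under the product-type assumptions in force here one has $\delta^\M_i b_j = 0$ for $i\neq j$, so $\delta^\M b_j = \delta^\M_j b_j = -b_j b_j$, not $-\sum_i b_i b_j$.

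The paper's proof proceeds differently and avoids these obstacles. First, it reduces (without loss of generality) to showing $\delta^\M_1 \tilde\omega_{MT}=0$; this is a real simplification because then only $b_1$ gets differentiated, producing a single $b_1 b_1$ insertion, and $\delta^\M_1 \Delta$ involves only $d_1,d_1^\ast,b_1$. Second, the cancellation is \emph{not} (R) versus (B) but $T_{\sigma,I}$ versus $\conj T_{\sigma,I}$: one takes the adjoint inside $\Trconj$, uses $b_j^\ast=b_j$ and the graded cyclicity, and then applies the multi-form analogue of Lemma \ref{move delta}, namely $\Tr(-1)^Q A\,b_1\,B\,(\delta^\M_1\Delta)^\ast\,C = \Tr(-1)^Q A\,(\delta^\M_1\Delta)\,B\,b_1\,C$ whenever $d_1,d_1^\ast$ commute with $A,B,C$. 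This swaps the positions of $b_1$ and $(\delta^\M_1\Delta)^\ast$ rather than converting one into the other. The three resulting families of terms (the $b_1 b_1$ terms, the $(\delta^\M_1\Delta)$ insertions in the first slot, and the $(\delta^\M_1\Delta)$ insertions in later slots) are then each shown to vanish after summing over $\sigma\in S_{m-1}$ and $I\in\C_{N,m}$, using that conjugating reverses the order of the $b$'s and the sign $\sign\sigma$ absorbs the resulting order-reversing permutation.
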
 
Since $\delta^\M$ commutes with the integral in \eqref{eq: defn of omega MT},
it suffices to prove that $\delta^\M \tilde{\omega}_{MT} = 0$.
Furthermore, without loss of generality, it suffices to prove that $\delta^\M_1 \tilde{\omega}_{MT} = 0$.

\begin{lemma}
	Consider operator-valued forms $A$, $B$, and $C$.
	Assume that $d_1$ and $d_1^\ast$ commute with each of $A$, $B$, and $C$,
	and that the forms below are trace-class. Then we have the identity
	\begin{equation*}
	\Tr (-1)^Q A b_1 B (\delta^\M_1 \Delta)^\ast C  = \Tr (-1)^Q A (\delta^\M_1 \Delta) B b_1 C.
	\end{equation*}
\end{lemma}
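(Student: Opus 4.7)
The plan is to mimic the proof of Lemma \ref{move delta} essentially verbatim, with $(d, d^\ast, b)$ replaced by $(d_1, d_1^\ast, b_1)$ and $\delta^\M$ replaced by $\delta^\M_1$. The generalization of Lemma \ref{lemma: derivative of d star and Delta} that was asserted earlier in this section gives $\delta^\M_1 d_1^\ast = [d_1^\ast, b_1]$, and combined with the assumption that $\delta^\M_1 d_k^\ast = 0$ for $k \neq 1$ (so only the $\Delta_1$ summand of $\Delta$ contributes), this yields
\begin{equation*}
\delta^\M_1 \Delta = \{d_1, [d_1^\ast, b_1]\} = d_1 d_1^\ast b_1 - d_1 b_1 d_1^\ast + d_1^\ast b_1 d_1 - b_1 d_1^\ast d_1.
\end{equation*}
Using Lemma \ref{lemma:adjoint} together with $b_1^\ast = b_1$ (which holds by the assumption that each $b_j$ is symmetric), I would then compute
\begin{equation*}
(\delta^\M_1 \Delta)^\ast = b_1 d_1 d_1^\ast - d_1 b_1 d_1^\ast + d_1^\ast b_1 d_1 - d_1^\ast d_1 b_1.
\end{equation*}

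Next I would verify the desired identity term-by-term, in direct parallel to the three displayed lines in the proof of Lemma \ref{move delta}. For the first and fourth summands of $(\delta^\M_1 \Delta)^\ast$, the relevant 0-form factor ($d_1 d_1^\ast$ or $d_1^\ast d_1$) commutes with each of $A$, $B$, $C$, and $b_1$ and anticommutes twice with $(-1)^Q$, so the graded cyclicity of the trace (Lemma \ref{lemma:adjoint}) permits me to transport it from immediately after $B$ to immediately after $A$ at the cost of the sign $(-1)^2 = +1$, matching the corresponding summands of the RHS. For the middle block $-d_1 b_1 d_1^\ast + d_1^\ast b_1 d_1$, a single move of a $d_1$ or $d_1^\ast$ around the cycle (past $b_1$) converts the LHS expression into the RHS expression; the signs cancel because the mixed block is preserved under $d_1 \leftrightarrow d_1^\ast$. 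Summing the three identities yields the claim.

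The main obstacle is bookkeeping of the graded signs arising from Lemma \ref{lemma:adjoint}, given that $b_1$ is a one-form while $d_1$, $d_1^\ast$, $A$, $B$, $C$ may carry various form degrees. However, since Lemma \ref{move delta} was already proved in precisely this generality, the sign accounting is guaranteed to go through unchanged; no new analytic input beyond the commutation hypotheses already built into the setup of \S \ref{subsection: the multi form} is required, so the argument really is formally identical to the single-Laplacian case.
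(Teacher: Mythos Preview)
Your proposal is correct and matches the paper exactly --- the paper itself omits the proof as ``essentially identical to the proof of Lemma \ref{move delta}.'' One small slip: $d_1 d_1^\ast$ need not commute with $b_1$, but this commutation is never actually required in the argument (for the first summand you cycle $d_1 d_1^\ast$ around the trace through $C$, $(-1)^Q$, $A$; for the fourth you commute $d_1^\ast d_1$ directly through $B$), so the verbatim transcription of Lemma \ref{move delta} still goes through.
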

\begin{proof}
	A generalization of Lemma \ref{lemma: derivative of d star and Delta}
	holds, giving 
	$\delta_1^\M \Delta = \{ d_1, [d_1^\ast, b_1] \}$ and therefore
	$(\delta_1^\M \Delta)^\ast = b_1d_1d_1^\ast - d_1 b_1 d_1^\ast + d_1^\ast b_1 d_1 - d_1^\ast d_1 b_1$.
We use the cyclicity of the trace and that $d_1$ and $d_1^\ast$ commute with $A$, $B$, and $C$ and anticommute with $(-1)^Q$ to compute:
  \begin{align*}
   \Tr (-1)^Q A  b_1 B (b_1 d_1 d_1^\ast ) C   &=   \Tr (-1)^Q A  (d_1 d_1^\ast b_1) B b_1  C;  \\
   \Tr (-1)^Q A  b_1 B ( - d_1^\ast  d_1 b_1 ) C   &=  \Tr (-1)^Q A  (-  b_1 d_1^\ast d_1)  B  b_1  C;  \\
   \Tr (-1)^Q A b_1 B (-  d_1 b_1 d_1^\ast + d_1^\ast b_1 d_1 ) C   &=  \Tr  (-1)^Q A (d_1^\ast b_1 d_1 - d_1 b_1 d_1^\ast)  B  b_1  C.
\end{align*}
Summing the three identities above gives the result.
\end{proof}

We will need the following computations
of the derivatives of $T_{\sigma,I}$ and $\conj{T}_{\sigma, I}$ in the $1$-direction:
\begin{align*}
~ \delta^\M_1 (T_{\sigma,I})  = & -   \sign \sigma \Tr (-1)^Q R_z^{i_1} b_1 b_1 R_z^{i_2} b_{\sigma(2)} \dots R_z^{i_m} b_{\sigma(m)}  \\
& + \sum_{\alpha = 1}^{i_1}  \sign \sigma  \Tr (-1)^Q R_z^\alpha (\delta^\M_1 \Delta) R_z^{ i_1 - \alpha + 1 } b_1 R_z^{i_2} b_{\sigma(2)} \dotsm R_z^{i_m} b_{\sigma(m)}   \\
& +  \sum_{j=2}^m \sum_{\beta = 1}^{i_j}  T_{\sigma, I; j, \beta},
\end{align*}
where 
\begin{equation*}
T_{\sigma, I; j, \beta}  := (-1)^{j-1}   \sign \sigma \Tr (-1)^Q R_z^{i_1} b_{1} \dotsm b_{\sigma(k-1)} R_z^{\beta} (\delta^\M_1 \Delta)  R_z^{i_j - \beta + 1} b_{\sigma(j)} \dotsm R_z^{i_m} b_{\sigma(m)}.
\end{equation*}
Similarly,
\begin{align*}
~ \delta^\M_1 (\conj{T}_{\sigma,I})  = & -   \sign \sigma \Trconj (-1)^Q R_{\conj{z}} ^{i_1} b_1 b_1 R_{\conj{z}}^{i_2} b_{\sigma(2)} \dots R_{\conj{z}}^{i_m} b_{\sigma(m)}  \\
& + \sum_{\alpha = 1}^{i_1}  \sign \sigma  \Trconj (-1)^Q R_{\conj{z}}^\alpha (\delta^\M_1 \Delta) R_{\conj{z}}^{ i_1 - \alpha + 1 } b_1 R_{\conj{z}}^{i_2} b_{\sigma(2)} \dotsm R_{\conj{z}}^{i_m} b_{\sigma(m)}   \\
& +  \sum_{j=2}^m \sum_{\beta = 1}^{i_j}  \conj{T}_{\sigma, I; j, \beta},
\end{align*}
where 
\begin{equation*}
\conj{T}_{\sigma, I; j, \beta}  := (-1)^{j-1}   \sign \sigma \Trconj (-1)^Q R_{\conj{z}}^{i_1} b_{1} \dotsm b_{\sigma(k-1)} R_{\conj{z}}^{\beta} (\delta^\M_1 \Delta)  R_{\conj{z}}^{i_j - \beta + 1} b_{\sigma(j)} \dotsm R_{\conj{z}}^{i_m} b_{\sigma(m)}.
\end{equation*}

We will now show $\delta^\M_1 \tilde{\omega}_{MT} = 0$ via three lemmas.

\begin{lemma} The following vanishes:
	\begin{align*}
	&  \sum_{\sigma \in S_{m-1}, I \in \C_{N, m}}  \sign \sigma \Tr (-1)^Q R_z^{i_1} b_1 b_1 R_z^{i_2} b_{\sigma(2)} \dotsm R_z^{i_m} b_{\sigma(m)}  \\
	+&   \sum_{\sigma \in S_{m-1}, I \in \C_{N, m}}  \sign \sigma \Trconj (-1)^Q R_{\conj{z}} ^{i_1} b_1 b_1 R_{\conj{z}}^{i_2} b_{\sigma(2)} \dotsm R_{\conj{z}}^{i_m} b_{\sigma(m)}.
	\end{align*}
\end{lemma}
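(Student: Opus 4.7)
The strategy is to convert the first $\Tr$-sum into a $\Trconj$-sum using the identity $\Tr \kappa = \Trconj \kappa^\ast$ from Lemma \ref{lemma:adjoint}, and then to use graded cyclicity of $\Trconj$ together with a reindexing of $(\sigma, I)$ to match the second sum, term by term, with the opposite sign. The two cruxes of the sign bookkeeping are the antisymmetry $(b_1 b_1)^\ast = -b_1 b_1$ (which follows from $b_1^\ast = b_1$ and the graded commutativity of Lemma \ref{lemma:adjoint}), and the fact that $R_z^\ast = R_{\conj z}$.

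First I will apply the adjoint-trace identity to a single summand. The factors inside $\Tr$, listed in order, have form degrees $0, 0, 2, 0, 1, 0, 1, \ldots, 0, 1$; taking the adjoint reverses their order and introduces the sign $(-1)^{\sum_{i<j} d_i d_j}$, which simplifies to $(-1)^{\binom{m-1}{2}}$, and then an additional $-1$ from $(b_1 b_1)^\ast = -b_1 b_1$. All other factors are self-adjoint except $R_z$, which becomes $R_{\conj z}$. This yields
\begin{equation*}
\Tr (-1)^Q R_z^{i_1} b_1 b_1 R_z^{i_2} b_{\sigma(2)} \dotsm R_z^{i_m} b_{\sigma(m)} = -(-1)^{\binom{m-1}{2}} \, \Trconj (-1)^Q b_{\sigma(m)} R_{\conj z}^{i_m} \dotsm b_{\sigma(2)} R_{\conj z}^{i_2} b_1 b_1 R_{\conj z}^{i_1},
\end{equation*}
after moving $(-1)^Q$ (which has form degree $0$ and commutes with all $b_j$ and $R_z$ at the operator level) to the front.

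Next I cycle the trailing block $R_{\conj z}^{i_2} b_1 b_1$ (form degree $2$) to the front using graded cyclicity $\Trconj XY = (-1)^{|X||Y|} \Trconj YX$; since the rest of the product has form degree $m-2$, the sign $(-1)^{2(m-2)}$ is $+1$, and the result is
\begin{equation*}
\Trconj (-1)^Q R_{\conj z}^{i_2} b_1 b_1 R_{\conj z}^{i_1} b_{\sigma(m)} R_{\conj z}^{i_m} b_{\sigma(m-1)} \dotsm R_{\conj z}^{i_3} b_{\sigma(2)}.
\end{equation*}
This is exactly a summand of the second sum for a relabeled pair $(\tau, J)$: define $J = (j_1, \ldots, j_m)$ by $j_1 = i_2$, $j_2 = i_1$, $j_k = i_{m-k+3}$ for $k \geq 3$, and $\tau(k) = \sigma(m-k+2)$ for $k \in \{2,\ldots,m\}$. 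The map $(\sigma, I) \mapsto (\tau, J)$ is a bijection (in fact an involution) on $S_{m-1} \times \C_{N,m}$. Since $\tau = \sigma \circ \rho$ where $\rho$ is the reversal permutation on $\{2,\ldots,m\}$, we have $\sign \sigma = \sign \tau \cdot (-1)^{\binom{m-1}{2}}$.

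Finally, combining the two signs $-(-1)^{\binom{m-1}{2}}$ and $(-1)^{\binom{m-1}{2}}$, we get an overall factor of $-1$, so that the first sum equals minus the second sum under the bijection $(\sigma,I)\leftrightarrow(\tau,J)$. Adding them gives zero. The main subtlety to watch is simply the sign accounting in the adjoint of a product of forms with mixed degrees $0, 1, 2$; once those two signs are computed correctly, the reindexing matches the two sums up exactly.
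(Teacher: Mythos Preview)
Your proposal is correct and follows essentially the same route as the paper: apply $\Tr\kappa=\Trconj\kappa^\ast$, reverse the order of the factors picking up the sign $(-1)^{\frac{1}{2}m(m+1)}$ (equivalently your $-(-1)^{\binom{m-1}{2}}$, since $(b_1b_1)^\ast=-b_1b_1$ accounts for the discrepancy), cycle so that $R_{\conj z}^{i_2}b_1b_1R_{\conj z}^{i_1}$ sits in front, and then reindex via the order-reversing permutation on $\{2,\dots,m\}$ to match the $\Trconj$-sum with the opposite sign. Your explicit bijection $(\sigma,I)\mapsto(\tau,J)$ and sign bookkeeping are slightly more detailed than the paper's terse version, but the argument is the same.
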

\begin{proof} We have
	\begin{align*}
	& \sign \sigma\, \Tr (-1)^Q R_z^{i_1} b_1 b_1 R_z^{i_2} b_{\sigma(2)} \dotsm R_z^{i_m} b_{\sigma(m)} \\
	&=  \sign \sigma\, \Trconj  \left( (-1)^Q R_z^{i_1} b_1 b_1 R_z^{i_2} b_{\sigma(2)} \dotsm R_z^{i_m} b_{\sigma(m)} \right)^\ast \\
	&= \sign \sigma\, (-1)^{\frac{1}{2}m(m+1)}   \Trconj  b_{\sigma(m)} R_{\conj{z}}^{i_m}  \dotsm b_{\sigma(2)} R_{\conj{z}}^{i_2}  b_1 b_1 R_{\conj{z}}^{i_1} (-1)^Q \\
	&= \sign \sigma\, (-1)^{\frac{1}{2}m(m+1)}   \Trconj  (-1)^Q  R_{\conj{z}}^{i_2}  b_1 b_1 R_{\conj{z}}^{i_1}  b_{\sigma(m)} R_{\conj{z}}^{i_m}  \dotsm b_{\sigma(2)}.
	\end{align*}
	Let $r$ be the permutation that reverses the order of $(2, \dots, m)$.
	Note that $\sign r = (-1)^{\frac{1}{2} (m-2)(m-1)} = (-1)^{1 + \frac{1}{2} m(m+1)}$,
	so that $\sign ( r \circ \sigma) = (-1)^{1 + \frac{1}{2} m(m+1)} \sign \sigma$.
	Summing over $\sigma \in S_{m-1}$ and $I \in \C_{N,m}$, we obtain the lemma.
\end{proof}

\begin{lemma} The following vanishes:
	\begin{align*}
	&  \sum_{\sigma \in S_{m-1}, I \in \C_{N, m}} \sum_{\alpha = 1}^{i_1}  \sign \sigma  \Tr (-1)^Q R_z^\alpha (\delta^\M_1 \Delta) R_z^{ i_1 - \alpha + 1 } b_1 R_z^{i_2} b_{\sigma(2)} \dotsm R_z^{i_m} b_{\sigma(m)} \\
	+& \sum_{\sigma \in S_{m-1}, I \in \C_{N, m}}  \sum_{\alpha = 1}^{i_1}  \sign \sigma  \Trconj (-1)^Q R_{\conj{z}}^\alpha (\delta^\M_1 \Delta) R_{\conj{z}}^{ i_1 - \alpha + 1 } b_1 R_{\conj{z}}^{i_2} b_{\sigma(2)} \dotsm R_{\conj{z}}^{i_m} b_{\sigma(m)}.
	\end{align*}
\end{lemma}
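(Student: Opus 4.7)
The plan is to mimic the preceding lemma: I will construct a sign-reversing involution on the index set $\{(\sigma, I, \alpha) : \sigma \in S_{m-1},\ I = (i_1, \ldots, i_m) \in \C_{N,m},\ 1 \leq \alpha \leq i_1\}$ that pairs each summand of the $\Tr$-sum with a summand of the $\Trconj$-sum so that the two cancel.

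Starting from a typical summand
\[
\sign \sigma \, \Tr (-1)^Q R_z^\alpha (\delta^\M_1 \Delta) R_z^{i_1 - \alpha + 1} b_1 R_z^{i_2} b_{\sigma(2)} \dotsm R_z^{i_m} b_{\sigma(m)}
\]
of the $\Tr$-sum, I will carry out three moves in sequence. First, I apply Lemma \ref{lemma:adjoint}(3) together with the corollary that follows it to rewrite this summand as $\Trconj$ of the adjoint of the operator inside the trace. This operator contains $m+1$ one-forms --- namely $\delta^\M_1 \Delta$, $b_1$, $b_{\sigma(2)}, \ldots, b_{\sigma(m)}$, with each $b_j$ symmetric by our standing assumptions --- so reversing the product introduces the sign $(-1)^{m(m+1)/2}$; each $R_z$ becomes $R_{\conj z}$, and $(-1)^Q$ may be moved freely since it commutes with everything present. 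Second, I apply Lemma \ref{move delta} with $d, d^\ast, b, \delta^\M \Delta$ replaced throughout by $d_1, d_1^\ast, b_1, \delta^\M_1 \Delta$; the required commutations hold since $d_1$ and $d_1^\ast$ commute with every $R_{\conj z}$ factor and with every $b_{\sigma(j)}$ for $j \geq 2$ by hypothesis. This swaps $(\delta^\M_1 \Delta)^\ast$ past $b_1$, simultaneously converting $(\delta^\M_1 \Delta)^\ast$ into $\delta^\M_1 \Delta$. Third, I invoke the graded cyclicity of the trace (Lemma \ref{lemma:adjoint}(4)) to rearrange the result into the standard shape of a summand of the $\Trconj$-sum; the sign contribution from this cyclic rearrangement is $(-1)^{(m-1)\cdot 2} = 1$.

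Comparing the positions of the $R_{\conj z}$ factors and of the $b$'s on both sides of the resulting identity will identify the pairing $(\sigma, I, \alpha) \mapsto (\sigma', I', \alpha')$ as
\[
\sigma' = \sigma \circ r, \qquad \alpha' = i_2, \qquad I' = (i_1 + i_2 - \alpha,\ \alpha,\ i_m,\ i_{m-1},\ \ldots,\ i_3),
\]
where $r \in S_{m-1}$ reverses $\{2, \ldots, m\}$; a direct check confirms this is an involution on the index set and that $(\sigma', I', \alpha')$ is a valid triple. Since $\sign \sigma' = \sign \sigma \cdot (-1)^{(m-1)(m-2)/2}$, the total sign relating the original $\Tr$-summand to the paired $\Trconj$-summand will be $(-1)^{m(m+1)/2 + (m-1)(m-2)/2} = (-1)^{m(m-1) + 1} = -1$, giving the desired pairwise cancellation and proving the lemma. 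The main obstacle is just the careful bookkeeping of signs from reversing $m+1$ one-forms combined with the sign of the reversal permutation $r$; as in the preceding lemma, these two contributions conspire precisely to $-1$.
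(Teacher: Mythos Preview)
Your proposal is correct and follows essentially the same route as the paper: take the adjoint (picking up $(-1)^{m(m+1)/2}$ from the $m+1$ one-forms), apply the analogue of Lemma~\ref{move delta} to swap $b_1$ past $(\delta^\M_1\Delta)^\ast$, cyclically rearrange, and then pair summands via the reversal permutation $r$. Your explicit involution $(\sigma,I,\alpha)\mapsto(\sigma\circ r,\ (i_1+i_2-\alpha,\alpha,i_m,\ldots,i_3),\ i_2)$ and the sign computation $(-1)^{m(m+1)/2+(m-1)(m-2)/2}=-1$ make precise exactly what the paper sketches with ``the claim follows from an argument similar to the previous lemma.''
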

\begin{proof}
	We have:
	\begin{align*}
	& \sign \sigma  \Tr (-1)^Q R_z^\alpha (\delta^\M_1 \Delta) R_z^{ i_1 - \alpha + 1 } b_1 R_z^{i_2} b_{\sigma(2)} \dotsm R_z^{i_m} b_{\sigma(m)}  \\
	&= \sign \sigma  \Trconj \left( (-1)^Q R_z^\alpha (\delta^\M_1 \Delta) R_z^{ i_1 - \alpha + 1 } b_1 R_z^{i_2} b_{\sigma(2)} \dotsm R_z^{i_m} b_{\sigma(m)}  \right)^\ast \\
	&= \sign \sigma (-1)^{\frac{1}{2}m(m+1)} \Trconj  b_{\sigma(m)}   R_{\conj{z}}^{i_m} \dotsm b_{\sigma(2)}  R_{\conj{z}}^{i_2}  b_1  R_{\conj{z}}^{ i_1 - \alpha + 1 } (\delta^\M_1 \Delta)^\ast R_{\conj{z}}^\alpha (-1)^Q  \\
	&= \sign \sigma (-1)^{\frac{1}{2}m(m+1)} \Trconj  b_{\sigma(m)}   R_{\conj{z}}^{i_m} \dotsm b_{\sigma(2)}  R_{\conj{z}}^{i_2}  (\delta^\M_1 \Delta)  R_{\conj{z}}^{ i_1 - \alpha + 1 } b_1 R_{\conj{z}}^\alpha (-1)^Q  \\
	&= \sign \sigma (-1)^{\frac{1}{2}m(m+1)} \Trconj (-1)^Q  R_{\conj{z}}^{i_2}  (\delta^\M_1 \Delta)  R_{\conj{z}}^{ i_1 - \alpha + 1 } b_1 R_{\conj{z}}^\alpha  b_{\sigma(m)}   R_{\conj{z}}^{i_m} \dotsm b_{\sigma(2)} .
	\end{align*}
	The claim follows from an argument similar to the previous lemma.
\end{proof}

\begin{lemma}
	For each $j$ ($2 \leq j \leq m$),
	\begin{align*}
	\sum_{\sigma \in S_{m-1}, I \in \C_{N, m}}  \sum_{\beta = 1}^{i_j}  \left(   T_{\sigma, I; j, \beta} +   \conj{T}_{\sigma, I; j, \beta}  \right)   = 0.
	\end{align*}
\end{lemma}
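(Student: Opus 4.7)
The plan is to show that for each fixed $j \in \{2,\dots,m\}$ there is an involution $\iota_j$ on the index set $S_{m-1} \times \C_{N,m} \times \{1,\dots,i_j\}$, sending $(\sigma, I, \beta) \mapsto (\sigma', I', \beta')$, with the property that
\begin{equation*}
T_{\sigma, I; j, \beta} = -\conj{T}_{\sigma', I'; j, \beta'}.
\end{equation*}
Since the relation $T_{\sigma, I; j, \beta}+\conj{T}_{\iota_j(\sigma,I,\beta)}=0$ is a bijective pairing of the two sums $\sum T$ and $\sum \conj T$, the desired identity follows by re-indexing. This parallels the strategy of the two preceding lemmas in the subsection, in which the order-reversing permutation played the role of the involution producing pairwise cancellation.

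The single-term identity will follow from four successive manipulations of $T_{\sigma,I;j,\beta}$. First, I apply Lemma \ref{lemma:adjoint}(2) to rewrite $\Tr$ as $\conj{\Tr}$ of the adjoint expression: the adjoint reverses the order of all factors, replaces each $R_z$ by $R_{\conj z}$ and $(\delta^\M_1 \Delta)$ by $(\delta^\M_1 \Delta)^\ast$, leaves the self-adjoint $b_i$'s and $(-1)^Q$ unchanged, and contributes the sign $\epsilon_{m+1}=(-1)^{m(m+1)/2}$ from reversing the $m+1$ one-forms $b_1,b_{\sigma(2)},\dots,b_{\sigma(m)},\delta^\M_1\Delta$. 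Second, using graded cyclicity of the trace (Lemma \ref{lemma:adjoint}(4)), I shuffle the zero-form factors $(-1)^Q$ and $R_{\conj z}^{\bullet}$ freely and move the single factor $b_1$ to sit immediately to the left of $(\delta^\M_1 \Delta)^\ast$. Third, I apply the multi-direction analogue of Lemma \ref{move delta} stated just above to interchange $b_1$ across $(\delta^\M_1 \Delta)^\ast$, converting the latter back into $(\delta^\M_1 \Delta)$; the hypotheses hold because resolvents commute with $d_1,d_1^\ast$ as functions of $\Delta$, and because $b_i$ for $i\neq 1$ commutes with $d_1,d_1^\ast$ by our standing assumption. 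Fourth, one more round of graded cyclicity arranges the expression into the canonical $\conj{T}_{\sigma',I';j,\beta'}$ form, from which the new indices may be read off. In the illustrative case $m=2$, the resulting map is $((i_1,i_2),\beta)\mapsto((i_2-\beta+1,\,i_1+\beta-1),\beta)$, which manifestly preserves the constraints $i_k \ge 1$ and $1\le\beta\le i_j$ and squares to the identity; for general $m$ and $j$ one obtains analogous explicit formulas in which $I'$ is obtained by rearranging the entries of $I$ around position $j$ and $\sigma'$ differs from $\sigma$ by a partial reversal on either $\{2,\dots,j-1\}$ or $\{j+1,\dots,m\}$.

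The principal obstacle is the sign bookkeeping. I must verify that the accumulated sign from $\epsilon_{m+1}$, from the $(-1)^{j-1}$ prefactors in both $T$ and $\conj T$, from the ratio $\sign\sigma/\sign\sigma'$ introduced by the partial permutation, and from the several $(-1)^{kl}$ factors incurred in each application of graded trace cyclicity, combines to exactly $-1$, thereby yielding $T_{\sigma,I;j,\beta}+\conj{T}_{\sigma',I';j,\beta'}=0$. Each individual contribution is determined by elementary form-degree considerations (Lemma \ref{move delta} itself contributes no sign), so the calculation, though tedious, is mechanical. Once the sign matches and $\iota_j$ is confirmed to be a genuine involution on the admissible index set, the lemma follows immediately by summation, completing also the proof that $\tilde\omega_{MT}$, and hence $\omega_{MT}$, is closed on $\M$.
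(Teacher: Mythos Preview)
Your proposal is correct and follows essentially the same route as the paper: take the adjoint to pass from $\Tr$ to $\Trconj$ (picking up $\epsilon_{m+1}$), invoke the multi-direction analogue of Lemma \ref{move delta} to swap $b_1$ with $(\delta^\M_1\Delta)^\ast$, then use graded cyclicity to recognize the result as $-\conj{T}_{\sigma',I';j,\beta'}$; the paper's proof is exactly this computation, ending with ``Summing over $\sigma, I, \beta$ proves the lemma,'' which is your bijection argument. Two small corrections: in your step~2 you cannot literally bring $b_1$ \emph{adjacent} to $(\delta^\M_1\Delta)^\ast$ by cyclicity (the intervening $b_{\sigma(k)}$'s are one-forms), but this is unnecessary since the move-delta lemma already allows an intermediate block $B$ built from resolvents and $b_i$'s with $i\neq 1$; and in the general-$m$ description, $\sigma'$ differs from $\sigma$ by the reversal of \emph{both} blocks $\{2,\dots,j-1\}$ and $\{j,\dots,m\}$, not one or the other.
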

\begin{proof}
	We have
	\begin{align*}
	T_{\sigma, I; j, \beta}  = & (-1)^{j-1}   \sign \sigma \\
	&  \Tr (-1)^Q R_z^{i_1} b_{1} \dotsm b_{\sigma(j-1)} R_z^{\beta} (\delta^\M_1 \Delta)  R_z^{i_j - \beta + 1} b_{\sigma(j)} \dotsm R_z^{i_m} b_{\sigma(m)} \\
	= &(-1)^{j-1}    \sign \sigma  \\
	& \Trconj \left( (-1)^Q R_z^{i_1} b_{1} \dotsm b_{\sigma(j-1)} R_z^{\beta} (\delta^\M_1 \Delta)  R_z^{i_j - \beta + 1} b_{\sigma(j)} \dotsm R_z^{i_m} b_{\sigma(m)}  \right)^\ast \\
	= &(-1)^{j-1}  \sign \sigma  (-1)^{\frac{1}{2} m (m+1) }  \\
	& \Trconj   b_{\sigma(m)}  R_{\conj{z}}^{i_m} \dotsm  b_{\sigma(j)} R_{\conj{z}}^{i_j - \beta + 1} (\delta^\M_1 \Delta)^\ast R_{\conj{z}}^{\beta} b_{\sigma(j-1)}  \dotsm b_{1} R_{\conj{z}}^{i_1} (-1)^Q \\
	= &(-1)^{j-1}  \sign \sigma  (-1)^{\frac{1}{2} m (m+1) }  \\
	& \Trconj   b_{\sigma(m)}  R_{\conj{z}}^{i_m} \dotsm  b_{\sigma(j)} R_{\conj{z}}^{i_j - \beta + 1} b_1 R_{\conj{z}}^{\beta} b_{\sigma(j-1)}  \dotsm b_2 R_{\conj{z}}^{i_2} (\delta^\M_1 \Delta) R_{\conj{z}}^{i_1} (-1)^Q \\
	= & (-1)^{j-1}  \sign \sigma  (-1)^{\frac{1}{2} m (m+1)  + j(m-j+1)}  \\
	& \Trconj (-1)^Q  R_{\conj{z}}^{i_j - \beta + 1} b_1 R_{\conj{z}}^{\beta} b_{\sigma(j-1)}  \dotsm  b_2 R_{\conj{z}}^{i_2} (\delta^\M_1 \Delta) R_{\conj{z}}^{i_1}  b_{\sigma(m)}  R_{\conj{z}}^{i_m} \dotsm  b_{\sigma(j)} .
	\end{align*}
	Summing over $\sigma \in S_{m-1}$, $I \in \C_{N,m}$, and $\beta \in \{ 1, \dots, i_j\}$ proves the lemma.
\end{proof}

The previous three lemmas prove that $\delta^\M_1 \tilde{\omega}_{MT} = 0$.
The same argument shows that $\delta^\M_k \tilde{\omega}_{MT} = 0$ for all $k$.
It follows that $\delta^\M \tilde{\omega}_{MT} =0$
and therefore  that $\delta^\M \omega_{MT} =0$. This completes
the proof of Theorem \ref{theorem: multi closed}.

\subsection{Proof of the metric independence theorem}
\label{subsection: proof of multi independence}

We will now prove our main theorem:
\begin{theorem*}[Theorem \ref{theorem: multi torsion metric independence}]
	Suppose that for either  $j = 1$ or $j=2$,
	$n_j$ is odd and for every $\gamma_j \in \Gamma_j $,
	$\gamma_j$ is either orientation-preserving or has nondegenerate fixed points as a diffeomorphism of $M_j$.
	Then $MT(h_1, h_2)$ is independent of the metric $h_j$.
\end{theorem*}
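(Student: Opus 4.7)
The plan is to adapt the Stokes' theorem argument of \S\ref{subsection: proof of metric anomaly} to the three-dimensional setting that multi-torsion requires. Without loss of generality I take $j=1$; fix $h_2 \in \M_{\Gamma_2}$, and let $h_1(u)$, $u \in [0,1]$, be a smooth family of metrics in $\M_{\Gamma_1}$. For $A > \e > 0$, let $V_{A,\e}$ denote the three-dimensional submanifold of $\Mprod$ parametrized by $(u, t_1, t_2) \in [0,1] \times [\e, A] \times [\e, A]$ and mapping to the rescaled product metric $h_{t_1, t_2}(u)$ from \S\ref{subsection: the multi form}. Because $\omega_{MT}$ is closed by Theorem \ref{theorem: multi closed},
\begin{equation*}
\delta^\M \!\left(t_1^{s_1} t_2^{s_2} \omega_{MT}\right) = s_1 t_1^{s_1-1} t_2^{s_2} \, dt_1 \wedge \omega_{MT} + s_2 t_1^{s_1} t_2^{s_2-1} \, dt_2 \wedge \omega_{MT},
\end{equation*}
and I will integrate this 3-form over $V_{A,\e}$ and apply Stokes' theorem.

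The boundary $\partial V_{A,\e}$ has six two-dimensional faces. On the faces $u=0$ and $u=1$, the pullback of $t_1^{s_1} t_2^{s_2} \omega_{MT}$ integrates to $\Gamma(s_1)\Gamma(s_2)\zeta_u(s_1, s_2)$, where $\zeta_u(s_1, s_2) := \zeta(s_1, s_2; \Delta^{h(u)}(t_1, t_2), (-1)^Q Q_1 Q_2)$, by Lemma \ref{lemma: multi zeta as integral of omega}. Since $h_2$ is fixed, the pullback of $b_2$ to $V_{A,\e}$ has only a $dt_2$-component, so the pullback of $\omega_{MT}$ to $V_{A,\e}$ has no $du \wedge dt_1$ component; consequently the faces $t_2 = \e$ and $t_2 = A$ (parametrized by $(u, t_1)$) contribute zero identically. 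For $\Real s_1, \Real s_2$ sufficiently large, the remaining faces $t_1 = \e$ and $t_1 = A$ vanish in the limits $\e \to 0^+$ and $A \to \infty$ by the multi-admissibility statement of Theorem \ref{theorem: heat kernel is multi-admissible}, which uses the acyclicity of $F_1$ and $F_2$.

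To identify the bulk, I will use the reformulation $b_1 = \ast_1^{-1} \delta_1^\M \ast_1$ from Remark \ref{remark: hodge star}: its $du$-component on $V_{A,\e}$ is $\ast_1^{-1}\frac{d\ast_1}{du}\, du$. A computation paralleling the derivation of \eqref{equation: omega MT on sigma} identifies the $du \wedge dt_2$ coefficient of $\omega_{MT}$ as (up to sign)
\begin{equation*}
\tfrac{1}{t_2}\Tr (-1)^Q \ast_1^{-1} \tfrac{d\ast_1}{du}\left(Q_2 - \tfrac{n_2}{2}\right) e^{-\Delta(t_1, t_2, u)}.
\end{equation*}
Dividing by $\Gamma(s_1)\Gamma(s_2)$ and passing to the limits $\e \to 0^+$, $A \to \infty$ then yields, for $\Real s_1, \Real s_2$ sufficiently large, an identity of the shape
\begin{equation*}
s_1 \!\int_0^1 \! \zeta\!\left(s_1, s_2;\, \Delta(t_1, t_2, u),\ (-1)^Q \ast_1^{-1} \tfrac{d\ast_1}{du} \bigl(Q_2 - \tfrac{n_2}{2}\bigr) \right) du \;=\; \pm\bigl[\zeta_1(s_1, s_2) - \zeta_0(s_1, s_2)\bigr],
\end{equation*}
whose two sides then admit unique meromorphic extensions to $\mathbb{C}^2$ and are holomorphic at the origin.

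Applying $\frac{\partial^2}{\partial s_1\, \partial s_2}\big|_{(0,0)}$ completes the proof. The right-hand side produces $\pm 4\bigl[MT(h_1(1), h_2) - MT(h_1(0), h_2)\bigr]$, while the prefactor $s_1$ on the left forces the result to equal $\int_0^1 \frac{\partial}{\partial s_2}\big|_{(0,0)} \zeta\bigl(s_1, s_2;\, \Delta(t_1, t_2, u),\, (-1)^Q \ast_1^{-1} \frac{d\ast_1}{du} (Q_2 - \frac{n_2}{2})\bigr)\, du$ evaluated at $s_1 = 0$. With the grading sign absorbed into $(-1)^Q = (-1)^{Q_1}(-1)^{Q_2}$, the first-factor operator $\alpha_1 = (-1)^{Q_1}\ast_1^{-1}\frac{d\ast_1}{du}$ is exactly the form required by Proposition \ref{prop: no t^0 terms}, so under our hypotheses on $n_1$ and $\Gamma_1$, Corollary \ref{coro: partial derivative vanish} forces the integrand to vanish identically. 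Thus $MT(h_1(u), h_2)$ is constant in $u$, and since $\M_{\Gamma_1}$ is convex it follows that $MT(h_1, h_2)$ is independent of $h_1$. The main obstacle will be the third paragraph: tracking how the $dt_1$ and $du$ components of $b_1$ and the $dt_2$ component of $b_2$ combine through the resolvent powers, the permutation sum, and the complex-conjugate symmetrization in the definition \eqref{eq: defn of tilde omega MT} of $\tilde{\omega}_{MT}$, then performing the contour integral \eqref{eq: defn of omega MT} to isolate the $du \wedge dt_2$ coefficient in a form to which Corollary \ref{coro: partial derivative vanish} directly applies.
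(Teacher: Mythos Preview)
Your proposal is correct and follows essentially the same Stokes' theorem argument as the paper: a three-dimensional cube in $\Mprod$ parametrized by $(u,t_1,t_2)$, the observation that $b_2$ contributes only a $dt_2$ factor so $\omega_{MT}$ has no $du\wedge dt_1$ component (killing the $t_2$-faces identically and the $s_2\,dt_2\wedge\omega_{MT}$ bulk term), vanishing of the $t_1$-faces in the limit, and Corollary~\ref{coro: partial derivative vanish} to kill the surviving $\partial/\partial s_2|_{(0,0)}$ term. The only cosmetic difference is that the paper uses independent cutoffs $\epsilon_1,A_1,\epsilon_2,A_2$ and sends $\epsilon_1\to 0$, $A_1\to\infty$ before $\epsilon_2\to 0$, $A_2\to\infty$, which makes the limiting argument for the $t_1$-faces slightly cleaner than your single pair $(\e,A)$.
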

\begin{proof}
	Without loss of generality, we may assume $j=1$. So let us suppose that $n_1$ is odd.
	Fix a metric $h_2$ on $M_2$ and consider a smooth curve $h_1(u)$, $u \in [0,1]$, of metrics on $M_1$, with associated Hodge star $\ast_1 = \ast_1(u)$.
	For $\epsilon_1, \epsilon_2, A_1, A_2 > 0$,
	let $B = B_{\epsilon_1, \epsilon_2, A_1, A_2}$ be the cube in $\Mprod$ parametrized by
	$(t_1, t_2, u) \mapsto \frac{1}{t_1}h_1(u) \times \frac{1}{t_2} h_2$, for $\epsilon_1 \leq t_1 \leq A_1$, $\epsilon_2 \leq t_2 \leq A_2$,
	$0 \leq u \leq 1$.
	Recalling Remark \ref{remark: hodge star},
	we have that pulled back to $\partial B$,
	\begin{align*}
	b_1 = \dsst \left(Q_1 - \frac{n_1}{2} I \right)  \frac{dt_1}{t_1}  + \ast_1^{-1} \frac{d \ast_1}{du} \, du \text{ and } b_2 = \dsst \left(Q_2 - \frac{n_2}{2} I \right)  \frac{dt_2}{t_2}.
	\end{align*}
	Since $b_2$ commutes with the resolvent,
	the formula
	\eqref{eq: defn of tilde omega MT}
	simplifies greatly,
	and we may compute the contour integral in \eqref{eq: defn of omega MT}
	to obtain that $\omega_{MT}$ pulled back to $\partial B$ is
	\begin{multline*}
	\omega_{MT} = 
	\Tr (-1)^Q e^{-\Delta(t_1, t_2, u)} \frac{Q_1}{t_1} \frac{Q_2}{t_2} \, dt_1 \,dt_2 \\
	+ \Tr (-1)^Q e^{-\Delta(t_1, t_2, u)} \ast_1^{-1} \frac{d \ast_1}{du} \left(Q_2 - \frac{n_2}{2} I \right) \frac{1}{t_2}  \, du\, dt_2,
	\end{multline*}
	where we have used that
	\begin{align*}
	\Tr (-1)^Q Q_1 e^{-\Delta(t_1, t_2, u)} =\Tr (-1)^Q Q_2 e^{-\Delta(t_1, t_2, u)} =\Tr (-1)^Q  e^{-\Delta(t_1, t_2, u)}  =0, 
	\end{align*}
	which follow from arguments similar to an argument in the proof
	of Theorem \ref{theorem: even vanishing of MT}.
	Since $\omega_{MT}$ is closed, we have
	\begin{align*}
	\delta (t_1^{s_1} t_2^{s_2} \omega_{MT}) = s_1 t_1^{s_1 - 1} t_2^{s_2} \Tr (-1)^Q e^{-\Delta(t_1, t_2, u)} \ast_1^{-1} \frac{d \ast_1}{du} \left(Q_2 - \frac{n_2}{2} I \right) \frac{1}{t_2} \, du \, dt_1 \, dt_2.
	\end{align*}
	Assume that $\Real s_1$ and $\Real s_2$ are both large.
	We apply Stokes' theorem on $B$ to obtain
	\begin{equation} \label{stokes}
	\iiint_B \delta^\M_1 (t_1^{s_1} t_2^{s_2} \omega_{MT}) = \iint_{\partial B} t_1^{s_1} t_2^{s_2} \omega_{MT}.
	\end{equation}
	The integral over $B$ is
	\begin{align*}
	& \iiint_B \delta^\M_1 (t_1^{s_1} t_2^{s_2} \omega_{MT}) \\
	& =  s_1 \int_{0}^1  \int_{\epsilon_2}^{A_2} \int_{\epsilon_1}^{A_1}
	t_1^{s_1 - 1} t_2^{s_2} \Tr (-1)^Q e^{-\Delta(t_1, t_2, u)} \ast_1^{-1} \frac{d \ast_1}{du}  \left(Q_2 - \frac{n_2}{2} I \right) \frac{1}{t_2} \,  dt_1 \, dt_2 \, du.
	\end{align*}
	The boundary $\partial B$ is a cube with six faces,
	described by $u=0$, $u=1$, $t_1 = \epsilon_1$, $t_1 = A_1$, $t_2 = \epsilon_2$, and $t_2 = A_2$. Since $\omega_{MT}$ vanishes on the latter two faces,
	the integral over $\partial B$ consists of four terms:
	\begin{align}
	& \int_{\epsilon_2}^{A_2} \int_{\epsilon_1}^{A_1} t_1^{s_1} t_2^{s_2} \Tr (-1)^Q e^{-\Delta(t_1, t_2, 1)} \frac{Q_1}{t_1} \frac{Q_2}{t_2} \, dt_1 \, dt_2 ~~~~ &(u=1) \\
	-& \int_{\epsilon_2}^{A_2} \int_{\epsilon_1}^{A_1} t_1^{s_1} t_2^{s_2} \Tr (-1)^Q e^{-\Delta(t_1, t_2, 0)} \frac{Q_1}{t_1} \frac{Q_2}{t_2} \, dt_1 \, dt_2 ~~~~&(u=0) \\
	\label{3rd} +& \int_{0}^{1} \int_{\epsilon_2}^{A_2} \epsilon_1^{s_1} t_2^{s_2} \Tr (-1)^Q e^{-\Delta(\epsilon_1, t_2, u)} \ast_1^{-1} \frac{d \ast_1}{du}  \left(Q_2 - \frac{n_2}{2} I \right) \frac{1}{t_2} \, dt_2 \, du ~~~~&(t_1 = \epsilon_1)\\
	\label{4th} +& \int_{0}^{1} \int_{\epsilon_2}^{A_2} A_1^{s_1} t_2^{s_2} \Tr (-1)^Q e^{-\Delta(A_1, t_2, u)} \ast_1^{-1} \frac{d \ast_1}{du}  \left(Q_2 - \frac{n_2}{2} I \right) \frac{1}{t_2} \, dt_2 \, du ~~~~&(t_1 = A_1)
	\end{align}
	For fixed $\epsilon_2$ and $A_2$, we will now study the limits $\epsilon_1 \to 0^+$ and $A_1 \to \infty$.
	By the heat kernel estimates,
	the term \eqref{3rd} tends to $0$ as $\epsilon_1 \to 0^+$,
	and the term \eqref{4th} tends to $0$ as $A_1 \to \infty$.
	Thus from \eqref{stokes} we obtain
	\begin{align*}
	& s_1 \int_{0}^1  \int_{\epsilon_2}^{A_2} \int_{0}^\infty t_1^{s_1-1} t_2^{s_2} \Tr (-1)^Q e^{-\Delta(t_1, t_2, u)} \ast_1^{-1} \frac{d \ast_1}{du}  \left(Q_2 - \frac{n_2}{2} I \right) \frac{1}{t_2} \, du \, dt_2 \, dt_1 \\
	=& \int_{\epsilon_2}^{A_2} \int_{0}^{\infty} t_1^{s_1} t_2^{s_2} \Tr (-1)^Q e^{-\Delta(t_1, t_2, 1)} \frac{Q_1}{t_1} \frac{Q_2}{t_2} \, dt_1 \, dt_2 \\
	-& \int_{\epsilon_2}^{A_2} \int_{0}^{\infty} t_1^{s_1} t_2^{s_2} \Tr (-1)^Q e^{-\Delta(t_1, t_2, 0)} \frac{Q_1}{t_1} \frac{Q_2}{t_2} \, dt_1 \, dt_2.
	\end{align*}
	We may now take the limits $\epsilon_2 \to 0^+$ and $A_2 \to \infty$ to obtain
	\begin{align*}
	& s_1 \int_{0}^1  \int_{0}^{\infty} \int_{0}^\infty t_1^{s_1-1} t_2^{s_2} \Tr (-1)^Q e^{-\Delta(t_1, t_2, u)} \ast_1^{-1} \frac{d \ast_1}{du}  \left(Q_2 - \frac{n_2}{2} I \right) \frac{1}{t_2} \, du \, dt_2\, dt_1 \\
	=& \int_{0}^{\infty} \int_{0}^{\infty} t_1^{s_1} t_2^{s_2} \Tr (-1)^Q e^{-\Delta(t_1, t_2, 1)} \frac{Q_1}{t_1} \frac{Q_2}{t_2} \, dt_1 \, dt_2 \\
	-& \int_{0}^{\infty} \int_{0}^{\infty}  t_1^{s_1} t_2^{s_2} \Tr (-1)^Q e^{-\Delta(t_1, t_2, 0)} \frac{Q_1}{t_1} \frac{Q_2}{t_2} \, dt_1\, dt_2.
	\end{align*}
	Multiplying both sides by $\frac{1}{\Gamma(s_1)\Gamma(s_2)}$ gives that
	\begin{align*}
	&s_1 \int_0^1 \zeta\left(s_1, s_2; \Delta(t_1, t_2, u), (-1)^Q \ast_1^{-1} \frac{d \ast_1}{du}  \left(Q_2 - \frac{n_2}{2} I \right) \right) \, du  \\
	&=  \zeta\left(s_1, s_2; \Delta(t_1, t_2, 1), (-1)^Q Q_1 Q_2 \right) - \zeta\left(s_1, s_2;  \Delta(t_1, t_2, 1), (-1)^Q Q_1 Q_2 \right).
	\end{align*}
	We have shown that the equality holds for $\Real s_1$ and $\Real s_2$ large,
	but since both sides possess unique meromorphic continuations to all of $\mathbb{C}^2$,
	in fact the equality holds everywhere.
	In particular, using the Definition \ref{definition: multi-torsion} of multi-torsion, we have the following equality of derivatives at the origin:
	\begin{multline*}
	\left. \frac{\partial^2}{\partial s_1 \partial s_2} \right|_{(s_1, s_2)=(0,0)} s_1 \int_0^1   \zeta\left(s_1, s_2; \Delta(t_1, t_2, u), (-1)^Q \ast_1^{-1} \frac{d \ast_1}{du}  \left(Q_2 - \frac{n_2}{2} I \right) \right)
	\, du  \\
	= MT(1) - MT(0).
	\end{multline*}
	Since the  multi-zeta function in the integral is holomorphic
	at $(0,0)$, the left-hand side is equal to
	\begin{equation*}
	\int_0^1 \left. \frac{\partial}{\partial s_2} \right|_{(s_1, s_2)=(0,0)}  \zeta\left(s_1, s_2; \Delta(t_1, t_2, u), (-1)^Q \ast_1^{-1} \frac{d \ast_1}{du}  \left(Q_2 - \frac{n_2}{2} I \right) \right) \, du.
	\end{equation*}
	Note that $(-1)^Q \ast_1^{-1} \frac{d \ast_1}{du}  \left(Q_2 - \frac{n_2}{2} I \right)$
	is an operator of the form $\alpha_\Gamma$ in the assumptions
	of Corollary \ref{coro: partial derivative vanish},
	and the assumptions  of the present theorem ensure
	that the remaining assumptions of that corollary hold.
	Thus we may apply that corollary
	to obtain that the integrand vanishes.
	This shows $MT(1) - MT(0) = 0$,
	which proves the theorem.
\end{proof}

\section{The eta invariant}
\label{section: eta}

In the series of papers \cites{aps-I, aps-II, aps-III}, Atiyah-Patodi-Singer
introduced the eta invariant
as a measure of ``spectral asymmetry"
and proved an index theorem for manifolds with boundary.
In this section, we provide another example of the utility of 
our differential forms formalism
by observing that Atiyah-Patodi-Singer's
theorem computing the variation of the eta invariant
is a consequence of the closedness of a certain one-form
$\omega_\eta$ on the space of elliptic operators.

Let $M$ be a compact manifold of dimension $n$.
Let $E \to M$ be a vector bundle of rank $k$.
Let $B$ be an elliptic differential operator of order $r >0 $
that is symmetric with respect to some metric on $E$.
Set 
\begin{align}
\eta(s; B) :=  \zeta \left( \frac{s+1}{2}; B^2, B \right).
\end{align}
Recalling the definition  \eqref{equation: heat zeta def} of the zeta function, we have for $\Real s$ large that
\begin{align} \label{equation: rewrite eta}
\eta(s; B) = \frac{1}{\Gamma\left(\frac{s+1}{2} \right)}  \int_0^\infty t^{\frac{s+1}{2}} \Tr B e^{-t B^2} \, \frac{dt}{t}.
\end{align}
By a  theorem of Atiyah-Patodi-Singer \cite{aps-III}, $\eta(s; B)$
is holomorphic at $s=0$, ensuring that the following definition 
of the eta invariant $\eta(B)$ makes sense:
\begin{align} \label{equation: eta invariant}
\eta(B) := \eta(0; B).
\end{align}

For some smooth parameter
space $\U$, let $u\in \U \mapsto B=B(u)$ be a smooth family of elliptic operators of order $r$.
Assume that for every $u \in \U$, 
there exists some metric $h(u)$
on $E$ with respect to which $B(u)$ is symmetric.
We will use $\delta$ to denote the exterior derivative on $\U$.

Consider the following $\mathbb{C}$-valued one-form on $\U$:
\begin{align}
\omega_\eta := \Tr (\delta B)  e^{-B^2}.
\end{align}
To explain the significance of $\omega_\eta$ to the eta invariant,
let $B_1$ be a fixed symmetric elliptic operator.
Let $C_{B_1}$ be the curve in the space of elliptic operators
parametrized by $t \in (0, \infty) \mapsto B(t) := \sqrt{t}B_1$.
Pulled back to $C_{B_1}$, we have
\begin{equation*}
\delta B = \frac{1}{2} t^{-1/2} B_1 \, dt \text{ and }
\omega_\eta = \frac{1}{2} t^{-1/2} \Tr B_1 e^{-tB_1^2} \, dt.
\end{equation*}
Thus for $\Real s$ large,
\begin{align} 
\eta(s; B_1) = \frac{2}{\Gamma\left(\frac{s+1}{2} \right)}  \int_{C_{B_1}} t^{\frac{s}{2}} \omega_\eta,
\end{align}
giving us the interpretation:
\begin{displayquote}
	The eta invariant $\eta(B_1)$ may be interpreted as $\frac{2}{\Gamma\left( 1/2 \right)}$ times the regularized integral of $\omega_\eta$ over the curve
	$C_{B_1}$ in the space of elliptic operators.
\end{displayquote}
We have the fundamental result:
\begin{lemma}
	$\omega_\eta$ is closed on $\U$.
\end{lemma}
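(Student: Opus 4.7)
The plan is to compute $\delta \omega_\eta$ explicitly and show it vanishes by exploiting the fact that $B$ commutes with any function of $B^2$, together with the graded cyclicity of the trace for operator-valued differential forms.

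First I would apply the Leibniz rule together with $\delta \circ \delta = 0$ (and the fact that $\delta$ commutes with $\Tr$) to reduce to
\begin{equation*}
\delta \omega_\eta = -\Tr (\delta B) \, \delta(e^{-B^2}).
\end{equation*}
To make sense of $\delta(e^{-B^2})$ I would use Duhamel's formula
\begin{equation*}
\delta(e^{-B^2}) = - \int_0^1 e^{-sB^2} \, \delta(B^2) \, e^{-(1-s)B^2} \, ds,
\end{equation*}
or, equivalently, represent $e^{-B^2}$ via the Cauchy integral formula \eqref{eq: CIF} with $N > n/m$ to ensure trace-class integrands (per Remark \ref{remark: trace-class inclusion}) and differentiate the resolvent via $\delta R_z = R_z \, \delta(B^2) \, R_z$. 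Either route gives the same conclusion; the Duhamel version is cleaner here.

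Next I would expand $\delta(B^2) = (\delta B) B + B (\delta B)$ and exploit the key fact that $B$ commutes with $e^{-sB^2}$, together with ordinary cyclicity of the trace for the scalar operator factor $B$ (whose form-degree is zero, so no sign appears). Combining the two resulting summands allows the factor $B$ to be recollected with $(\delta B)$ on a single side, yielding the rewriting
\begin{equation*}
\delta \omega_\eta = \int_0^1 \Tr \, \delta(B^2) \, e^{-sB^2} \, (\delta B) \, e^{-(1-s)B^2} \, ds.
\end{equation*}

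The final step is to observe that $\delta(B^2)$ and $(\delta B) e^{-(1-s)B^2}$ are both one-forms, so Lemma \ref{lemma:adjoint}(iv) applied with $k = l = 1$ introduces a sign $-1$ upon cyclically permuting them. After the substitution $s \mapsto 1-s$ in the integral, this identity becomes
\begin{equation*}
\delta \omega_\eta = -\int_0^1 \Tr (\delta B) \, e^{-sB^2} \, \delta(B^2) \, e^{-(1-s)B^2} \, ds = -\delta \omega_\eta,
\end{equation*}
where the last equality is just the Duhamel expression for $\delta\omega_\eta$ from step one. Hence $2\,\delta\omega_\eta = 0$, so $\omega_\eta$ is closed. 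The main obstacle I anticipate is bookkeeping: distinguishing the benign operator-cyclicity moves (used to shuffle scalar factors of $B$ and $e^{-sB^2}$ around) from the graded-cyclicity moves that produce the crucial sign making the right-hand side cancel against itself.
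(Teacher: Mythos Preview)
Your proof is correct and follows essentially the same approach as the paper's: commute $B$ through functions of $B^2$ and invoke graded cyclicity (Lemma~\ref{lemma:adjoint}(iv)) to produce the crucial sign. The paper runs the argument through the resolvent $\tilde\omega_\eta = \Tr(\delta B)R_z^N$ and the Cauchy integral formula rather than Duhamel---a route you yourself mention---which slightly streamlines the trace-class justifications when cycling the unbounded factor $B$.
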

\begin{proof}
	As usual, we will work with the resolvent
	$R_z := (z- B^2)^{-1}$ rather than directly with the heat operator $e^{-B^2}$.
	Fix an integer $N$ so that $2rN - r > n$,
	which ensures that $(\delta B) R_z^N$
	is trace-class, so that we may define 
	the one-form $\tilde{\omega}_\eta$ (whose dependence on $z$
	we suppress) by
	\begin{align*}
	\tilde{\omega}_\eta := \Tr \, (\delta B)  R_z^N.
	\end{align*}
	By the Cauchy integral formula, $\omega_\eta$ and $\tilde{\omega}_\eta$ are related by the identity
	\begin{align*}
	\omega_\eta &= \frac{1}{2\pi i} \frac{1}{(N-1)!}  \int_C e^{-z} \, \tilde{\omega}_\eta  \, dz,
	\end{align*}
	where $C$ is an appropriate contour in the complex plane.
	Thus to prove $\omega_\eta$ is closed, it suffices to prove $\tilde{\omega}_\eta$ is closed, which we will now do by a short computation.
	We have
	\begin{align}
	\delta \tilde{\omega}_\eta &= - \sum_{j=1}^N \Tr \, (\delta B) R_z^j [\delta(B^2 )] R_z^{N-j+1}   \nonumber \\
	&= - \sum_{j=1}^N \Tr \, (\delta B) R_z^j [B (\delta B)  + (\delta B) B ] R_z^{N-j+1}.
	\label{sum for eta tilde}
	\end{align}
	We may compute that
	\begin{align}
	\Tr \, (\delta B) R_z^j B( \delta B)  R_z^{N-j+1}
	&= \Tr \, (\delta B)  B R_z^j (\delta B) R_z^{N-j+1} \nonumber \\
	&= - \Tr \, (\delta B) R_z^{N-j+1} (\delta B)  B R_z^j, \nonumber
	\end{align}
	where we have used that $B$ commutes with $R_z$
	and the signed cyclicity of the trace (Lemma \ref{lemma: adjoint}).
	This shows that the sum in \eqref{sum for eta tilde} vanishes,
	completing the proof.
\end{proof}

Atiyah-Patodi-Singer proved the following 
theorem,
a consequence of which is
the homotopy invariance of
the reduced eta invariant,
defined as a difference of eta invariants associated  
to flat coefficient bundles of the same rank \cite{aps-III}.

\begin{theorem}[Atiyah-Patodi-Singer \cite{aps-I}, \cite{aps-III}]
	For a smooth one-parameter family of elliptic operators $B(u)$,
	the derivative of the eta invariant is
	\begin{align*} 
	\frac{d}{du} \eta(B(u)) = \frac{2}{\Gamma \left( \frac{1}{2} \right) } 
	\left[ \Tr \frac{dB}{du} e^{-t B(u)^2} \right]_{t^{-1/2}}.
	\end{align*}
\end{theorem}

\begin{remark}
The theorem may be proven by applying Stokes' theorem to our closed form $\omega_\eta$.
To explain this, suppose $B(u)$ is a smooth family of elliptic operators for $u \in [0,1]$.
For $A > \e > 0$, let $\Sigma = \Sigma_{A, \e}$ denote the surface
in the space of elliptic operators
parametrized by $(u, t) \mapsto t^{1/2} B(u)$, for $u  \in [0,1]$ and $t \in [\e, A]$.
Pulled back to $\Sigma$,
we have that $\delta B = \frac{1}{2}t^{-1/2} B(u) \, dt + t^{1/2} \frac{dB}{du} \, du$
and thus
\begin{align}
	\omega_\eta = \frac{1}{2}t^{-1/2} \Tr B(u) e^{-tB(u)^2} \, dt + t^{1/2} \Tr \frac{dB}{du} e^{-t B(u)^2} \, du.
\end{align} 
Since $\omega_\eta$ is closed, $\delta \left( t^{\frac{s}{2}} \omega_\eta \right)$ pulled back to $\Sigma$ is
\begin{align*}
	\delta \left( t^{\frac{s}{2}} \omega_\eta \right)
	&=  \frac{s}{2} t^{\frac{s+1}{2}}  \,  \Tr \frac{dB}{du}  e^{-t B(u)^2} \, \frac{dt}{t}  \, du .
\end{align*}
We omit the remaining details of the argument since it is very similar to
that in the proof of Theorem \ref{theorem: variation of T}.
\end{remark}

\begin{appendices}

\end{appendices}

 \bibliographystyle{abbrv} 
 \bibliography{bibliography}

\end{document}